\newtheorem{prop}{Proposition}[section]
\newtheorem{defi}[prop]{Definition}
\newtheorem{lem}[prop]{Lemma}
\newtheorem{thm}[prop]{Theorem}
\newtheorem{remar}[prop]{Remark}
\newtheorem{cor}[prop]{Corollary}
\DeclareMathAlphabet{\mathpzc}{OT1}{pzc}{m}{it}
\DeclareMathOperator{\Aut}{Aut}
\DeclareMathOperator{\End}{End}
\DeclareMathOperator{\Hom}{Hom}
\DeclareMathOperator{\Ind}{Ind}
\DeclareMathOperator{\cInd}{c-Ind}
\DeclareMathOperator{\Sym}{Sym}
\DeclareMathOperator{\GL}{GL}
\DeclareMathOperator{\SL}{SL}
\DeclareMathOperator{\Ker}{Ker}
\DeclareMathOperator{\Gal}{Gal}
\DeclareMathOperator{\soc}{soc}
\DeclareMathOperator{\Irr}{Irr}
\DeclareMathOperator{\supp}{Supp}
\DeclareMathOperator{\id}{id}
\DeclareMathOperator{\Mod}{Mod}
\DeclareMathOperator{\val}{val}
\DeclareMathOperator{\Sp}{Sp}
\DeclareMathOperator{\Rep}{Rep}
\DeclareMathOperator{\St}{St}
\DeclareMathOperator{\dt}{det}
\DeclareMathOperator{\Ban}{Ban}
\DeclareMathOperator{\ind}{ind}
\DeclareMathOperator{\rad}{rad}
\newcommand{\cIndu}[3]{\cInd_{#1}^{#2}{#3}}
\newcommand{\Indu}[3]{\Ind_{#1}^{#2}{#3}}
\newcommand{\oF}{\mathfrak o}
\newcommand{\pF}{\mathfrak{p}}
\newcommand{\pif}{\varpi}
\newcommand{\Qp}{\mathbb {Q}_p}
\newcommand{\Zp}{\mathbb{Z}_p}
\newcommand{\Qpbar}{\overline{\mathbb{Q}}_p}
\newcommand{\Qbar}{\overline{\mathbb{Q}}_p}
\newcommand{\GG}{\mathcal G}
\newcommand{\HH}{\mathcal H}
\newcommand{\Eins}{\mathbf 1}
\newcommand{\PP}{\mathcal P}
\newcommand{\MM}{\mathfrak M}
\newcommand{\KK}{\mathfrak K}
\newcommand{\ZZ}{\mathbb Z}
\newcommand{\CC}{\mathfrak C}
\newcommand{\DD}{\mathfrak D}
\newcommand{\mfg}[1]{\Mod_{fg}(#1)}
\newcommand{\mfgfl}[1]{\Mod_{fg}^{fl}(#1)}
\newcommand{\mfl}{\Mod^{fl}_{comp}}
\newcommand{\QQ}{\mathbb Q}
\newcommand{\Aa}{\mathfrak A}
\newcommand{\ab}{\mathcal A}
\newcommand{\Fp}{\mathbb F_p}
\newcommand{\Fq}{\mathbb F_q}
\newcommand{\Fbar}{\overline{\mathbb F}_p}
\newcommand{\II}{\mathcal I}
\newcommand{\RR}{\mathfrak R}
\newcommand{\RRs}{\RR_{\mathfrak s}} 
\title{Admissible unitary completions of locally $\Qp$-rational representations of $\GL_2(F)$}
\author{Vytautas Pa\v{s}k\={u}nas}
\date{\today.}
\begin{document} 
\maketitle
\begin{abstract}Let $F$ be a finite extension of $\Qp$, $p>2$. We construct admissible unitary completions 
of certain representations of $\GL_2(F)$ on $L$-vector spaces, where $L$ is a finite extension of $F$. When 
$F=\Qp$ using the results of Berger, Breuil and Colmez we obtain some results about lifting $2$-dimensional 
mod $p$ representations of the absolute Galois group of $\Qp$ to  crystabelline representations with given Hodge-Tate
weights.
\end{abstract}

\tableofcontents

\section{Introduction}
Let $F$ be a finite extension of $\Qp$ with the ring of integers $\oF$, uniformizer $\pif$ and the residue field isomorphic to $\Fq$. Let $G:=\GL_2(F)$ and $K:=\GL_2(\oF)$.
Let $L$ be a 'large' finite extension 
of $\Qp$, ring of integers $A$, $\MM$ the maximal ideal in $A$ and residue field $k=k_L$.  
Let $\mathfrak R$ be the category of smooth representations of $G$ on $\overline{L}$-vector spaces, 
then $\RR$ decomposes into a product of 
subcategories $\RR\cong \prod_{\mathfrak s\in \mathfrak B} \RRs,$
where $\mathfrak B$  is the set of inertial equivalence classes of supercuspidal representations of the Levy subgroups of $G$, see \cite{ber}, \cite{bk}. Following
Henniart \cite[Def A.1.4.1]{henniart} we say that an irreducible smooth 
$\overline{L}$-representation $\tau$ of $K$ is \textit{typical} for the Bernstein component $\RRs$, if for every 
irreducible object $\pi$ in $\RR$, $\Hom_K(\tau, \pi)\neq 0$ implies that $\pi$ lies in $\RRs$. We say that $\tau$ is a \textit{type} for $\RRs$ if it is typical and 
$\Hom_K(\tau, \pi)\neq 0$ for every irreducible object $\pi$ in $\RRs$. Given $\RRs$, there exists a type $\tau$, unique up to isomorphism, except when 
$\RRs$ contains $\chi\circ \det$. In this case, there are two typical representations $\theta\circ \det$ and $\St\otimes \theta\circ \det$, where
$\theta:=\chi|_{\oF^{\times}}$ and $\St$ is the lift to $K$ of the Steinberg representations of $\GL_2(\Fq)$, see \cite{henniart}. For us a $\Qp$-rational representation of 
$G$, is a representation $W$ of the form
\begin{equation}\label{qprational}
\bigotimes_{\sigma: F\hookrightarrow L} (\Sym^{r_{\sigma}} L^2 \otimes \dt^{a_{\sigma}})^{\sigma},
\end{equation}
where $r_{\sigma}$, $a_{\sigma}$ are integers, $r_{\sigma}\ge 0$, and an element $\bigl(\begin{smallmatrix} a & b\\ c & d\end{smallmatrix}\bigr )$ in $G$ acts on the 
$\sigma$-component  via 
$\bigl(\begin{smallmatrix} \sigma(a) & \sigma(b)\\ \sigma(c) & \sigma(d)\end{smallmatrix}\bigr)$, see \cite[\S2]{bsch} for a proper setting. The locally $\Qp$-rational 
representations in the title refer 
to the representations of the form $\pi\otimes_L W$, where $\pi$ is a smooth representation of $G$ on an $L$-vector space and $W$ is a 
$\Qp$-rational representation as above.

\begin{thm}\label{A} Assume $p>2$. Let $\tau$ be a smooth absolutely irreducible $L$-representation of $K$, 
which is typical for the Bernstein component  $\RRs$. Let  $W$ be 
$\Qp$-rational representation of $G$, twisted by a continuous character. Let $M$ be a 
$K$-invariant lattice in $\tau\otimes W$. 
Let $\kappa$ be an absolutely irreducible smooth admissible $k$-representation of $G$, 
such that $\pif$ acts trivially. Suppose that there exists an  irreducible smooth 
$k$-representation $\sigma$ of $K$, such that 
\begin{itemize}
\item[(1)] $\Hom_K(\sigma, \kappa)\neq 0$;
\item[(2)] $\sigma$ occurs as a subquotient of $M\otimes_A k$.
\end{itemize}
Then there exists a finite extension $L'$ of $L$, an absolutely irreducible smooth $L'$-representation $\pi$ of $G$ in $\RRs$, and an admissible unitary 
$L'$-Banach space representation $(E,\|\centerdot\|)$ of $G$, such that the following hold:
\begin{itemize}
\item[(i)] $\pi\otimes_{L'} W_{L'}$ is a dense $G$-invariant subspace of $E$;
\item[(ii)] $\Hom_G(\kappa\otimes_k k', E^0\otimes_{A'} k')\neq 0$,
\end{itemize} 
where $E^0$ is the unit ball in $E$ with respect to $\|\centerdot\|$.
\end{thm}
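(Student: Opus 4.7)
The plan is to construct $E$ as the $\pif$-adic completion of a suitable quotient of the compactly induced module
\[\Pi_0:=\cIndu{KZ}{G}{M},\]
where $Z$ denotes the centre of $G$ and acts on $M$ through the central character of $\tau\otimes W$ (possibly after absorbing the allowed continuous character twist into $W$). Frobenius reciprocity identifies $\Hom_{A[G]}(\Pi_0,V)$ with $\Hom_{A[KZ]}(M,V)$ for every smooth $A[G]$-module $V$, so every $G$-quotient of $\Pi_0$ is governed by a $KZ$-map out of $M$. In particular, any absolutely irreducible quotient of $\Pi_0\otimes_A L$ is locally $\Qp$-rational of the shape $\pi\otimes W$ for a smooth $\pi$ with $\Hom_K(\tau,\pi)\neq 0$; typicality of $\tau$ then forces $\pi\in\RRs$, so~(i) will fall out of any reasonable construction.

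The real content is the choice of quotient. Let $H:=\End_{A[G]}(\Pi_0)$ be the universal Hecke algebra. Hypothesis~(2) supplies a $K$-subquotient of $\overline M:=M\otimes_A k$ isomorphic to $\sigma$; composing with the embedding $\sigma\hookrightarrow\kappa|_K$ from~(1) and invoking Frobenius reciprocity produces a nonzero $G$-homomorphism from a subquotient of $\Pi_0\otimes_A k=\cIndu{KZ}{G}{\overline M}$ to $\kappa$. Since $\kappa$ is absolutely irreducible and admissible, the induced action of $H\otimes_A k$ on the nonzero image factors through a character $H\otimes_A k\to k''$ with $k''/k$ a finite extension, whose kernel is a maximal ideal $\overline{\mathfrak m}$. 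After enlarging $L$ to a suitable finite extension $L'$ with ring of integers $A'$ and residue field $k'\supseteq k''$, I lift $\overline{\mathfrak m}$ to a maximal ideal $\mathfrak m$ of $H\otimes_A L'$ specialising to $\overline{\mathfrak m}$ (using $A$-flatness of $H$), and take $\Pi$ to be an appropriate $A'[G]$-integral structure on $(\Pi_0\otimes_A L')/\mathfrak m$.

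The Banach space $E$ is then $\widehat\Pi[\tfrac{1}{\pif}]$ with unit ball $E^0=\widehat\Pi$, the $\pif$-adic separated completion of $\Pi$. An absolutely irreducible quotient $\pi\otimes W_{L'}$ of $\Pi\otimes_{A'}L'$ (extending $L'$ once more if necessary) is by construction a dense $G$-invariant subspace of $E$, yielding~(i) with $\pi\in\RRs$ by the first paragraph. For~(ii) one observes that $E^0\otimes_{A'}k'$ is a quotient of $\Pi\otimes_{A'}k'$, into which the subquotient of $\cIndu{KZ}{G}{\overline M}$ used to manufacture $\overline{\mathfrak m}$ maps nontrivially (this is precisely how $\overline{\mathfrak m}$ was defined), and the $G$-homomorphism to $\kappa$ then descends to give the required nonzero element of $\Hom_G(\kappa\otimes_k k', E^0\otimes_{A'}k')$.

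The principal obstacle is admissibility of $E$, i.e.\ showing that $E^0\otimes_{A'}k'$ is an admissible smooth $G$-representation over $k'$. Since $\overline M$ is of finite length as a $K$-representation, admissibility reduces to bounding the multiplicity of each irreducible smooth $K$-representation in $\widehat\Pi\otimes_{A'}k'$, and here one must combine the hypothesis $p>2$, finiteness of the localisation of $H\otimes_A k$ at $\overline{\mathfrak m}$, and the block-structural analysis of $\RRs$ developed earlier in the paper to ensure that no extra irreducible $K$-subrepresentations materialise upon $\pif$-adic completion. This is the step where I expect most of the technical effort to lie.
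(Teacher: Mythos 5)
Your proposal runs in the opposite direction from the paper's argument, and as written it cannot close the gap you yourself identify: admissibility of $E$. You propose to take $E^0=\widehat\Pi$, the $\pif$-adic completion of a quotient $\Pi$ of $\cIndu{KZ}{G}{M}$. Any such $\Pi$ is finitely generated over $A[G]$ (it is generated by the finite $A$-module $M$). But the introduction of the paper explicitly singles out precisely this kind of completion as the one that is \emph{not} expected to be admissible: the $G$-invariant lattices in $\pi\otimes W$ that one obtains as in Vign\'eras \cite{vigint} are finitely generated over $A[G]$, and the completions with respect to such lattices are expected to fail admissibility in general. Nothing in your sketch --- the Hecke algebra localisation at $\overline{\mathfrak m}$, its finiteness, or the Bernstein block decomposition --- supplies a mechanism to bound the $K$-multiplicities in $\widehat\Pi\otimes_{A'}k'$, and the hypothesis $p>2$ is not a Hecke-algebra input at all.

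The paper's proof reverses the direction. It starts from $\kappa$, not from $\pi\otimes W$. One first embeds $\kappa\hookrightarrow\Omega$, where $\Omega|_K$ is an injective envelope of $\soc_K\kappa$ in $\Rep_k(K)$ (this is \cite[Cor.~9.11]{bp}). The central construction is Theorem~\ref{liftomega}: because $p>2$, the pro-$p$ Sylow subgroup of $\KK_1/\pif^{\ZZ}$ lies inside $I$, so $\Omega$ is injective as a representation of both $\KK_0/\pif^{\ZZ}$ and $\KK_1/\pif^{\ZZ}$; dually $\Omega^\vee$ is projective and finitely generated over both Iwasawa algebras $k[[\KK_i/\pif^{\ZZ}]]$, one lifts these to projective modules over $A[[\KK_i/\pif^{\ZZ}]]$, and one glues the resulting Banach space representations via the amalgam $G=\KK_0*_{IZ}\KK_1$ to produce a unitary $L$-Banach space representation $E'$ of $G$ with $(E')^0\otimes_A k\cong\Omega$. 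Admissibility of $E'$ is then automatic from the Schneider--Teitelbaum duality (Theorem~\ref{duality}), because the dual is finitely generated over $A[[\KK_0/\pif^{\ZZ}]]$. Only \emph{after} this does one locate $\pi\otimes W$: Theorem~\ref{uff} uses hypothesis~(2), Frobenius reciprocity and admissibility of $E'$ to produce a $G$-equivariant injection $\pi\otimes_{L'}W_{L'}\hookrightarrow E'_{L'}$ with $\pi$ absolutely irreducible in $\RRs$. Finally $E$ is defined as the closure of $\pi\otimes_{L'}W_{L'}$ in $E'$; admissibility passes to closed subspaces, and the injection $E^0\otimes_{A'}k'\hookrightarrow\Omega_{k'}$ together with $\soc_K\Omega_{k'}\cong(\soc_K\kappa)\otimes_k k'$ gives~(ii). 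So the role of $p>2$ is to make the amalgamation step work, and admissibility comes for free rather than being verified at the end.

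A smaller inaccuracy: you write that an absolutely irreducible quotient $\pi\otimes W_{L'}$ of $\Pi\otimes_{A'}L'$ is ``by construction a dense $G$-invariant subspace of $E$''. A quotient of $\Pi\otimes_{A'}L'$ is not a subspace of the completion of $\Pi\otimes_{A'}L'$ unless $\Pi\otimes_{A'}L'$ is itself irreducible; what~(i) requires is a $G$-equivariant injection of $\pi\otimes W_{L'}$ into $E$ with dense image, and a surjection out of $\Pi\otimes_{A'}L'$ does not produce that.
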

We also have a variant of Theorem \ref{A}, when $\tau$ is the trivial representation of $K$, which allows $\pi$ to be possibly reducible unramified principal 
series representation, see Corollary \ref{cor2}. We do not know in general whether these completions are of finite length, and we can not control $\pi$, except that 
we know that $\pi$  lies in $\RRs$.  However, we show that 
any admissible unitary completion arises from our construction, see Proposition \ref{converse} and Lemma \ref{trivialll}. 
We also show that given $\kappa$ as above, there 
exists a unitary admissible topologically irreducible $L$-Banach space representation $E$ of $G$, such that $\Hom_G(\kappa, E^0\otimes_A k)\neq 0$, where 
$E^0$ is a unit ball in $E$ with respect to a $G$-invariant norm defining the topology on $E$, see Corollary \ref{cor3}. This result means that if one 
decides to throw away some irreducible smooth $k_L$-representations of $G$ by declaring them 'non-arithmetic', one is also forced to throw away 
some irreducible admissible unitary $L$-Banach space representations of $G$. When $\RRs$ contains a principal series representation, we show that 
in most cases the completions we get are not 'ordinary', for example when $\kappa$ is supersingular. 
Topologically irreducible completions of locally $\Qp$-rational representations are expected to be related to the $2$-dimensional representations of the absolute 
Galois group of $F$, see \cite{bsch}. If $F=\Qp$ this is indeed the case, see for example \cite{bergerbreuil}, \cite{colmez04b}, \cite{montreal}. If $F\neq \Qp$
then there is not so much known about the completions of locally $\Qp$-rational representations, with the exception of Vign\'eras paper \cite{vigint}. However, 
the $G$-invariant lattices in $\pi\otimes W$, that one gets in \cite{vigint} are always finitely generated over $A[G]$, it is expected that the completion 
with respect to such lattices will not be admissible in general. 
 
If $F=\Qp$ and $\RRs$ contains a principal series representation then the results of Berger-Breuil \cite{bergerbreuil} imply that the completions 
we get are topologically irreducible. Moreover, using results of Berger, Breuil and Colmez we may then transfer the statement of Theorem \ref{A} to the
Galois side.  We will describe this in more detail. Recall that a representation $V$ of $\mathcal G_{\Qp}:=\Gal(\Qbar/\Qp)$ is crystabelline if it becomes 
crystalline after restriction to $\Gal(\Qbar/E)$, where $E$ 
is an abelian extension of $\Qp$. Absolutely irreducible $L$-linear $2$-dimensional crystabelline representations of $\mathcal G_{\Qp}$  
with Hodge-Tate weights $(0,k-1)$, ($k\ge 2$) can be parameterized by pairs of smooth characters $\alpha, \beta:\Qp^{\times} \rightarrow L^{\times}$, such that 
$-(k-1)< \val(\alpha(p))\le \val(\beta(p))<0$ and $\val(\alpha(p))+\val(\beta(p))=-(k-1)$, see \cite[Prop 2.4.5]{bergerbreuil} or 
\cite[\S 5.5]{colmez04b}. We denote by $V(\alpha, \beta)$ the unique crystabelline representation $V$, such that 
$D_{cris}(V)=D(\alpha, \beta)$, where $D(\alpha, \beta)$ is the  filtered 
admissible $L$-linear $(\varphi, \mathcal G_{\Qp})$-module defined in \cite[Def 2.4.4]{bergerbreuil}. We denote by $\overline{V}$ the semi-simplification 
of the reduction modulo $\MM$ of any $\GG_{\Qp}$-stable lattice in $V$, let $\II_{\Qp}$ be the inertia subgroup of $\GG_{\Qp}$ and $\omega$ the reduction 
modulo $p$ of the cyclotomic character.  

\begin{thm}\label{B} Assume $p>2$. Fix smooth characters $\theta_1, \theta_2:\Zp^{\times}\rightarrow L^{\times}$, and an integer $k\ge 2$, such that 
\begin{itemize} 
\item[(a)] if $\theta_1=\theta_2$ then assume $k\ge p^2+1$;
\item[(b)] if $\theta_1\neq \theta_2$ and $\theta_1\theta_2^{-1}$ is trivial on $1+p\Zp$ then assume $k\ge p$.
\end{itemize}
Let $\rho$ be a semisimple  $2$-dimensional $k_L$-representation of $\mathcal G_{\Qp}$, such that 
\begin{itemize} 
\item[(c)] $\det \rho|_{\mathcal I_{\Qp}}= \bar{\theta_1} \bar{\theta_2}\omega^{k-1}$;
\item[(d)] if $\rho$ is irreducible, then it is absolutely irreducible;
\item[(e)] $\rho|_{\II_{\Qp}}\not\cong \bar{\theta}_1\oplus \bar{\theta}_2 \omega^{k-1}$ and 
$\rho|_{\II_{\Qp}}\not \cong \bar{\theta}_2\oplus \bar{\theta}_1 \omega^{k-1}$.
\end{itemize}
Then there exists a finite extension $L'$ of $L$ and an absolutely 
irreducible $2$-dimensional crystabelline $L'$-representation $V:=V(\alpha, \beta)$ of $\mathcal G_{\Qp}$, such that
\begin{itemize} 
\item[(i)] $\overline{V}\cong \rho$;
\item[(ii)] Hodge-Tate weights of $V$ are  $(0,k-1)$;
\item[(iii)]  either ($\alpha|_{\Zp^{\times}}=\theta_1$ and $\beta|_{\Zp^{\times}}=\theta_2$) or ($\alpha|_{\Zp^{\times}}=\theta_2$ and $\beta|_{\Zp^{\times}}=\theta_1$).
\end{itemize}
\end{thm}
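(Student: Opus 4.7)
The plan is to apply Theorem~\ref{A} with data $(\kappa,\tau,W)$ dictated by $(\rho,\theta_1,\theta_2,k)$, and then transfer the resulting admissible unitary Banach space representation to the Galois side using the $p$-adic local Langlands correspondence for $\GL_2(\Qp)$ of Breuil, Colmez, and Berger-Breuil.

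First, I would attach to $\rho$ a smooth admissible absolutely irreducible $k_L$-representation $\kappa$ of $G=\GL_2(\Qp)$ on which $\pif$ acts trivially, via the mod $p$ local Langlands correspondence: $\kappa$ is supersingular when $\rho$ is irreducible (absolute irreducibility coming from (d)), and an irreducible principal series or a twist of the Steinberg when $\rho$ is reducible. Condition (c) ensures that the central character of $\kappa$ is compatible with the data $(\theta_1,\theta_2,k)$. I then take $W=\Sym^{k-2}L^2$, twisted by a suitable power of $\dt$ so that the Hodge-Tate weights come out as $(0,k-1)$, and set $\tau=\Indu{I}{K}{\theta_1\otimes\theta_2}$ when $\theta_1\neq\theta_2$, respectively $\tau=\theta_1\circ\dt$ when $\theta_1=\theta_2$, so that $\tau$ is typical for the principal series Bernstein component $\RRs$ whose irreducible objects have supercuspidal support restricting to $\{\theta_1,\theta_2\}$ on $\Zp^{\times}$. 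Fix any $K$-invariant $A$-lattice $M$ in $\tau\otimes W$.

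The heart of the argument is hypothesis (2) of Theorem~\ref{A}: exhibit an irreducible $k$-representation $\sigma$ of $K$ appearing both in $\kappa|_K$ (which gives (1)) and as a Jordan-H\"older constituent of $\overline{M}:=M\otimes_A k$. The constituents of $\kappa|_K$ with trivial action of $\pif$ are precisely the Serre weights of $\rho$ in the sense of Buzzard-Diamond-Jarvis and Breuil, while the graded pieces of $\overline{M}$ form an explicit list of weights of $\GL_2(\Fp)$ parametrized by $\theta_1,\theta_2,k$. Conditions (a) and (b) are exactly the numerical bounds needed to force a Serre weight of $\rho$ to appear among those constituents for every $\rho$ allowed by (c)-(e): the bound $k\geq p$ is sharp in the generic case (b), while the coincidence $\theta_1=\theta_2$ in (a) requires reaching higher into the socle filtration of $\overline{M}$, hence the stronger $k\geq p^2+1$; condition (e) eliminates the remaining split configurations in which such a matching would otherwise fail. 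This weight-matching analysis is the main obstacle of the proof.

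With the hypotheses verified, Theorem~\ref{A} yields an absolutely irreducible smooth $L'$-representation $\pi\in\RRs$ and an admissible unitary $L'$-Banach space $E$ with $\pi\otimes_{L'}W_{L'}$ dense in $E$ and $\Hom_G(\kappa\otimes_k k',E^0\otimes_{A'}k')\neq 0$. Condition (e) forces $\pi$ to be an irreducible principal series, which I write as $\Indu{B}{G}{\alpha\otimes\beta\epsilon^{-1}}$ in the Berger-Breuil normalization, with $\{\alpha|_{\Zp^{\times}},\beta|_{\Zp^{\times}}\}=\{\theta_1,\theta_2\}$ and $\val(\alpha(p))+\val(\beta(p))=-(k-1)$ forced by matching central characters on $\pi\otimes W_{L'}$. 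By \cite{bergerbreuil}, $E$ is then the canonical admissible unitary completion of $\pi\otimes W_{L'}$, and the $p$-adic local Langlands correspondence attaches to it the crystabelline representation $V(\alpha,\beta)$. Since Colmez's functor intertwines reduction mod $\MM$ with the mod $p$ local Langlands correspondence, condition (ii) of Theorem~\ref{A} translates into $\overline{V(\alpha,\beta)}\cong\rho$, giving (i); parts (ii) and (iii) of Theorem~\ref{B} follow respectively from the Hodge-Tate normalization of $W$ and from $\pi\in\RRs$.
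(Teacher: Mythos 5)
Your overall strategy tracks the paper's: apply Theorem~\ref{A} (more precisely its principal-series incarnations, Corollaries~\ref{cor1} and~\ref{cor2}, specialized to $F=\Qp$ in Theorem~\ref{Qp} and Theorem~\ref{mainQp}) and then transfer the resulting admissible unitary completion to the Galois side via Berger--Breuil and Berger's computation of $E^0\otimes_A k$. Your description of the weight-matching step also has the right intuition, though the paper proves something stronger than what you sketch: Lemma~\ref{pokemon} shows that under the numerical hypotheses (a) and (b), \emph{every} irreducible $K$-weight with the correct central character $\bar\theta_1\bar\theta_2\omega^{k-2}$ appears in $\overline{M}$, so one never has to track which Serre weight of $\rho$ is being matched.

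There are two genuine gaps, however. First, you do not address the case of equal Frobenius eigenvalues $\chi_1(p)=\chi_2(p)$, i.e.\ $\chi_1=\chi_2$, which arises precisely under hypothesis (a) ($\theta_1=\theta_2$). Berger--Breuil's theory of the universal unitary completion of $\pi\otimes W$ does not treat this situation, and the claim that $E$ is automatically ``the canonical admissible unitary completion'' attached to a $V(\alpha,\beta)$ is unjustified there. The paper gets around this (in the proof of Theorem~\ref{Qp}) by invoking \cite[Cor.\ 4.5]{except}: one perturbs to $\chi_1\delta_x\otimes\chi_2\delta_{x^{-1}}$ with $x\in 1+\MM$, $x^2\neq 1$, preserving $E^0\otimes_A k_L$. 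Without this step your argument simply does not produce an absolutely irreducible $V$ when $\alpha(p)=\beta(p)$.

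Second, your reading of condition (e) is off. You assert it ``forces $\pi$ to be an irreducible principal series,'' but that is neither what it does nor how the paper uses it. Its role (via Lemma~\ref{addsoc} and Lemma~\ref{ordinary}) is to exclude the ``ordinary'' situation, i.e.\ to rule out the appearance of $\Indu{B}{G}{\psi_1\otimes\psi_2}$ with $(\psi_1|_{\Zp^{\times}},\psi_2|_{\Zp^{\times}})$ equal to $(\bar\theta_1,\bar\theta_2\omega^{k-2})$ or $(\bar\theta_2,\bar\theta_1\omega^{k-2})$ inside $E^0\otimes_A k$, which then yields the \emph{strict} inequalities $\val(\chi_1(p))<0$ and $\val(\chi_2(p))<0$. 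It is these strict inequalities, combined with $\val(\chi_1(p))+\val(\chi_2(p))=-(k-1)$, that put $(\alpha,\beta)$ into the parameter space $-(k-1)<\val(\alpha(p))\le\val(\beta(p))<0$ for which $V(\alpha,\beta)$ is absolutely irreducible crystabelline. Smooth irreducibility of $\pi$ is a separate matter: the paper still has to deal (in Theorem~\ref{mainQp}) with $\chi_1=\chi_2|\centerdot|^{\pm1}$ directly, showing via the lattice argument of Lemma~\ref{latticefin} that this forces $k=2$, where one falls into the supersingular case (a) and concludes by direct computation from \cite{bergerlizhu}.
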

See Theorem \ref{mainQp} for all $k\ge 2$. In the Example in \S\ref{FequalsQp} we check that our theory matches the known reductions of crystalline representations 
of small weights. To get the result when  $\theta_1=\theta_2$ we need to get around the case of equal Frobenius eigenvalues, which is not treated in the literature,
 this is done in \cite{except}. We comment on the assumptions on $\rho$ in Theorem \ref{B}: (c) is necessary, (d) 
is not serious, since if $\rho$ is irreducible then it is either absolutely irreducible or becomes reducible semi-simple after replacing $k_L$ with a finite extension, 
we impose (e) to make sure that we stay out of the 'ordinary' case, i.e. to ensure that the representation $V$ we get is absolutely irreducible. Since we cannot control 
$\pi$ in Theorem \ref{A}, we cannot control $\alpha(p)$ and $\beta(p)$ in Theorem \ref{B}. The condition $\pi$ lies in $\RRs$ in Theorem \ref{A} translates into 
condition (iii) in Theorem \ref{B}.

We will sketch the proof of Theorem \ref{A}. Let $e$ be an edge on the Bruhat-Tits tree, containing a vertex $v$. Let $\KK_1$ be the $G$-stabilizer of
$e$ and $\KK_0$ the $G$-stabilizer of $v$. The key point in our construction is that $G$ is an amalgam of $\KK_0$ and $\KK_1$ along $\KK_0\cap \KK_1$, which is the
stabilizer of $e$ preserving the orientation. This is used in \cite{coeff} and \cite{bp} to construct irreducible $k_L$-representations. We may assume that 
$\KK_0=KZ$, where $Z\cong F^{\times}$ is the centre of $G$. Let $\kappa$ be as in Theorem \ref{A}, then in \cite{bp} it is shown that there exists a $G$-equivariant 
injection $\kappa\hookrightarrow \Omega$, such that $\pif\in Z$ acts trivially on $\Omega$, $\Omega|_K$ is an injective envelope of $\kappa$ in the category $\Rep_{k_L} K$ 
of smooth $k_L$-representation of $K$.(For injective/projective envelopes see \S\ref{injectiveprojective}.) Since $\kappa$ is admissible, so is $\Omega$, 
moreover $\soc_G \Omega\cong \kappa$. Recall that the socle $\soc$ is the maximal 
semi-simple subobject. The first step is to lift $\Omega$ to a unitary admissible Banach space representation, \S\ref{sectionliftomega}.

We start with the general discussion, the details are contained in \S\ref{modules}, \S\ref{banach}. Let $\GG$ be a compact $p$-adic analytic group and 
let $I$ be an admissible injective object in $\Rep_{k_L} \GG$. 
Then dually $I^{\vee}$ is a projective finitely generated module of the completed group algebra $k[[\GG]]$, we may lift $I^{\vee}$ to a projective finitely 
generated module $P$ of $A[[\GG]]$. This module $P$ is then unique up to isomorphism. To $P$ following Schneider-Teitelbaum \cite{iw} we may associate 
a unitary admissible $L$-Banach space representation $P^d$ of $\GG$, $P^d:=\Hom_A^{cont}(P, L)$, with the supremum norm. If we let $(P^d)^0$ be the unit ball 
in $P^d$, then $(P^d)^0\otimes_A k_L\cong I$ as  $G$-representations. Concretely, when $\GG$ is a pro-$p$ group, then the only 
irreducible smooth $k_L$-representation
is the trivial one, and so $I$ is a finite direct sum of injective envelopes of the trivial representation. If $I$ is an injective envelope of the trivial 
representation then $I\cong C(\GG, k_L)$, the space of continuous function from $\GG$ to $k_L$, $I^{\vee}\cong k_L[[\GG]]$, $P\cong A[[\GG]]$ and 
$P^d\cong C(\GG, L)$, the space of continuous functions from $\GG$ to $L$ with the supremum norm.

We now go back to $\Omega$. For $i\in\{0,1\}$, set $\GG_i:=\KK_i/\pif^{\ZZ}$, since $\pif$ acts trivially, $\Omega$  is a representation of $\GG_i$. Denote 
the restriction of $\Omega$ to $\GG_i$ by  $\Omega_i$. The assumption $p\neq 2$ implies that the pro-$p$ 
Sylow subgroup of $\GG_1$ is equal to 
the pro-$p$ Sylow subgroup of $\GG_1\cap\GG_0$, which is a pro-$p$ Sylow subgroup of $\GG_0$.
This implies that $\Omega_1$ is an admissible injective object in $\Rep_{k_L} \GG_1$. 
The argument above gives us a projective finitely generated module $P_i$ of 
$A[[\GG_i]]$. Using some general facts about projective modules we find a $(\GG_0\cap \GG_1)$-equivariant isomorphism $\phi: P_0\cong P_1$, such that 
$\phi$ reduces to the identity modulo $\MM$. The results of \cite{iw} enable us to go back and forth between finitely generated $A[[\GG_i]]$-modules, and 
admissible unitary $L$-Banach space representations. So dually we get unitary $L$-Banach space representation $P_0^d$ of $\GG_0$, $P_1^d$ of $\GG_1$, and 
a $(\GG_1\cap \GG_0)$-equivariant isometrical isomorphism $\phi^d: P_1^d \overset{\cong}\rightarrow P_0^d$. We let $\pif$ act trivially everywhere, then by the amalgamation 
argument this data glues to a unitary admissible $L$-Banach space representation $B$ of $G$. Moreover, $B^0\otimes_A k_L\cong \Omega$ as a $G$-representation.
We note that although $P_0$ and $P_1$ are canonical, there is no canonical way to choose the isomorphism $\phi$. In general, different choices of $\phi$ will 
lead to non-isomorphic Banach space representations $B$, and different $\pi$ in Theorem \ref{A}. 

The second step is to produce $\pi$, see \S\ref{admissiblecompletions}. The assumption $\Hom_K(\sigma, \kappa)\neq 0$ implies that an injective envelope
$I_{\sigma}$ of $\sigma$ is a direct summand of $\Omega|_K$. This implies that the projective envelope $P_{\sigma^*}$ of $\sigma^*$ is a direct summand 
of $P_0$. We show that the assumption (2) in Theorem \ref{A} implies that 
$$\Hom_{L[[\GG_0]]}(P_0\otimes_A L, (\tau\otimes W)^*)\neq 0,$$ 
where $L[[\GG_0]]:=L\otimes_A A[[\GG_0]]$.  
Dually this means $\Hom_K(\tau\otimes W, B)\neq 0$. Since $\pif$ acts trivially on $B$ and by a scalar on $W$, there exists a unique extension $\tilde{\tau}$
of $\tau$ to a representation of $\KK_0$, such that $\Hom_{\KK_0}(\tilde{\tau}\otimes W, B)\neq 0$. Frobenius reciprocity then 
gives
\begin{equation}\label{C}
\Hom_G(\cIndu{\KK_0}{G}{\tilde{\tau}\otimes W}, B)\cong \Hom_G((\cIndu{\KK_0}{G}{\tilde{\tau}})\otimes W, B)\neq 0.
\end{equation}   
Using admissibility of $B$, we show that $B$ contains a $G$-invariant subspace of the form $\pi'\otimes W$, where $\pi'$ is a quotient of
 $\cIndu{\KK_0}{G}{\tilde{\tau}}$ of finite length. Thus if we replace $L$ with a finite extension, we may find a $G$-invariant subspace 
in $B$ isomorphic to $\pi\otimes W$, where $\pi$ is an absolutely irreducible smooth representation of $G$. Since $\tau$ is typical for 
$\RRs$, $\pi_{\overline{L}}$ is an object of $\RRs$. Since we work with coefficient fields which are not algebraically closed we use the 
results of Vign\'eras \cite{vig2}. Take $E$ to be the closure of $\pi\otimes W$ in $B$, then since $B$ is admissible, so is $E$ and 
we have an injection $E^0\otimes_A k_L\hookrightarrow B^0\otimes_A k\cong \Omega$. Since by construction $\soc_G \Omega\cong \kappa$, this yields the result. 

In general, it is quite hard to compute inside $B$. However, it might be possible to understand the completions better if we restrict ourselves  to the case when 
$\tau$ is the trivial representation, so that $\pi$ is an unramified principal series, and the weights in \eqref{qprational} are small,  $0\le r_{\sigma}\le p-1$. 
Using our methods one could try and lift the representations constructed 
in \cite{bp} to Banach space representations, (at least those that conjecturally correspond to the irreducible mod $p$ representations of $\Gal(\overline{F}/F)$), 
see Remark \ref{smoothtoalgebraic}. We hope to return to these questions in the future work.

\textit{Acknowledgements.} This paper grew out of the collaboration with Christophe Breuil, \cite{bp}. I thank Laurent Berger, Christophe Breuil, Ga\"etan Chenevier, 
Pierre Colmez, Matthew Emerton, Guy Henniart and Peter Schneider for answering my questions. I thank Florian Herzig for 
sending me a detailed list of comments on the earlier draft, his suggestions led to improvement of the exposition, especially in \S\ref{banach}. 
The paper was written when I was visiting IH\'ES and 
Universit\'e Paris-Sud, supported by Deutsche Forschungsgemeinschaft. I would like to thank these institutions. 

\section{Notation} 
Let $F$ be a finite extension of $\Qp$ with the ring of integers $\oF$, maximal ideal $\pF$ and the residue field isomorphic to $\Fq$. We fix a 
uniformizer $\varpi$ of $F$. Let $G:=\GL_2(F)$, $B$ the subgroup of upper-triangular matrices, $U$ the subgroup of unipotent upper-triangular matrices,
$K:=\GL_2(\oF)$, 
$$I:=\begin{pmatrix} \oF^{\times} & \oF \\ \pF & \oF^{\times}\end{pmatrix}, \quad I_1:=\begin{pmatrix} 1+\pF & \oF \\ \pF & 1+\pF\end{pmatrix}, \quad
K_1:=\begin{pmatrix} 1+\pF & \pF \\ \pF & 1+\pF\end{pmatrix}.$$
Let 
$$s:=\begin{pmatrix} 0  & 1\\ 1 & 0 \end{pmatrix}, \quad \Pi:=\begin{pmatrix} 0  & 1\\ \pif & 0 \end{pmatrix}, \quad t:=\begin{pmatrix} \pif  & 0\\  0 & 1 \end{pmatrix}.$$
Let $Z$ be the centre of $G$, $Z\cong F^{\times}$. Let $\KK_0$ be the $G$-normalizer of $K$, so that $\KK_0=KZ$ and let $\KK_1$ be the $G$-normalizer of 
$I$, so that $\Pi$ and $I$ generate $\KK_1$ as a group.

We fix an algebraic closure $\Qpbar$ of $\Qp$. We let $\val$ be the valuation on $\Qpbar$ such that $\val(p)=1$, and we set $|x|:=p^{-\val(x)}$. Let 
$L$ be a finite extension of $\Qp$ contained in $\Qpbar$, $A$ the ring of integers of $L$, $\varpi_L$ a uniformizer, and $\MM$ the maximal ideal of $A$, $k=k_L$
the residue field. The field $L$ will be our coefficient field, when needed we replace $L$ by a finite extension. Let $\Sigma$ be the set of $\Qp$-linear 
embeddings $F\hookrightarrow L$, we assume $[F:\Qp]=|\Sigma|$. Note that $k_L$ contains $\mathbb F_{q}$. As a consequence every irreducible 
smooth $k_L$-representation of $K$ is absolutely irreducible.  
If $M$ is an $A$-module then we write $\overline{M}:=M\otimes_A k$. If $V$ is a vector space over some field $\mathrm F$ 
we write $V^*:=\Hom_{\mathrm F}(V, \mathrm F)$ and if $\mathrm F'$ is a field extension of $\mathrm F$, then 
$V_{\mathrm F'}:= V\otimes_{\mathrm F}\mathrm F'$.
 
\section{Injective and projective envelopes}\label{injectiveprojective}
We recall some standard facts about injective and projective envelopes. Let $\ab$ be an abelian 
category. A monomorphism $\iota: N\hookrightarrow M$ is \textit{essential} if for every non-zero subobject $M_1$ of $M$ we have $N\cap M_1\neq 0$.
An injective envelope of $M$ is an essential monomorphism $\iota:M\hookrightarrow I$, such that $I$ is an injective object in $\ab$. 
An epimorphism $q: M\twoheadrightarrow N$ is \textit{essential} if for every morphism $s: P\rightarrow M$, the assertion '$qs$ is an epimorphism'
implies that $s$ is an epimorphism. A projective envelope of $M$ is an essential epimorphism $q: P\twoheadrightarrow M$ with $P$ a projective
object in $\ab$. One may easily
verify that injective and projective envelopes (if they exist) are unique up to (non-unique) isomorphism. So by abuse of language we 
will forget the morphism and say $I$ is an injective envelope of $M$ or $P$ is a projective envelope of $M$. 

\begin{thm} Let $R$ be a ring and $\Mod(R)$ the category of left $R$-modules, then every object in $\Mod(R)$ has an injective 
envelope.
\end{thm}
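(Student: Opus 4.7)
The plan is to follow the classical three-step argument: first show that $\Mod(R)$ has enough injectives; then, for a fixed embedding of $M$ into an injective, produce a maximal essential extension of $M$ inside it using Zorn's lemma; and finally verify, via a characterisation of injectives by the absence of proper essential extensions, that this maximal essential extension is itself injective. Essentiality being transitive, the resulting monomorphism $M\hookrightarrow E$ will then be an injective envelope.

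For the first step, recall that $\Hom_{\ZZ}(R,D)$ is an injective left $R$-module whenever $D$ is a divisible abelian group, by the adjunction $\Hom_R(-,\Hom_{\ZZ}(R,D))\cong \Hom_{\ZZ}(-,D)$ and the exactness of $\Hom_{\ZZ}(-,D)$ for divisible $D$; since every abelian group embeds into a divisible one (realise it as a quotient of a free abelian group and use that $\QQ/\ZZ$ is an injective cogenerator), we obtain an embedding $M\hookrightarrow I$ into some injective $I\in \Mod(R)$ via $m\mapsto (r\mapsto rm)$. With such an $I$ fixed, I would then apply Zorn's lemma to the poset of submodules $E$ with $M\subseteq E\subseteq I$ for which the inclusion $M\hookrightarrow E$ is essential; a union of an ascending chain of essential extensions of $M$ is again essential, because every nonzero submodule of the union already lies in, and hence meets $M$ inside, some term of the chain. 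This produces a submodule $E\subseteq I$ maximal with the property that $M\hookrightarrow E$ is essential.

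The third step rests on the lemma that an object $E$ of $\Mod(R)$ is injective if and only if it admits no proper essential extension. The forward direction is immediate, since an injective $E$ is a direct summand of any extension and a complement would meet $E$ trivially, hence vanish by essentiality. For the converse, I would embed $E$ into some injective $J$, choose by Zorn a submodule $C\subseteq J$ maximal with $C\cap E=0$, and check that $E\hookrightarrow J/C$ is essential: a nonzero submodule of $J/C$ has the form $N/C$ with $C\subsetneq N$, so $N\cap E\neq 0$ by maximality, and any $0\neq e\in N\cap E$ has nonzero image in $(E+C)/C\cap N/C$ because $E\cap C=0$. Thus if $E$ has no proper essential extension then $E=J/C$, so $E$ is a direct summand of $J$ and hence injective. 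Applied to the $E$ of the second step, the lemma finishes the proof: any essential extension $E\hookrightarrow E'$ lifts along $E\hookrightarrow I$ to a map $E'\to I$ by injectivity of $I$, whose kernel meets $E$ trivially and therefore vanishes by essentiality; so $E'$ embeds into $I$ and is, by transitivity of essentiality, also an essential extension of $M$, forcing $E'=E$ by the maximality in step two. The one delicate point where care is needed, and so the main thing to nail down, is the lemma characterising injectives by absence of proper essential extensions, since it is what converts the set-theoretic maximality taken inside a single chosen injective $I$ into the intrinsic injectivity of $E$.
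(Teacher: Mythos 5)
Your proof is correct and follows the classical Eckmann--Schopf argument (enough injectives via $\Hom_{\ZZ}(R,D)$ for $D$ divisible; Zorn for a maximal essential extension inside a fixed injective; the characterisation of injectives as modules with no proper essential extension), which is exactly the content of the reference the paper cites for this theorem (Mac Lane, \emph{Homology}, Theorem 11.3) — the paper itself gives no independent proof. One small phrasing slip in the Zorn step: a nonzero submodule of the union of an ascending chain need not lie entirely in a single term of the chain, but it contains a nonzero element $x$ that does, and the cyclic submodule $Rx$ then meets $M$ nontrivially inside that term, which is all that is needed.
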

\begin{proof} \cite{mac} Theorem 11.3.
\end{proof}

\begin{lem}\label{triv} Let $\ab'$ be a full abelian subcategory of $\ab$ and assume that we have 
a functor $F:\ab\rightarrow \ab'$, which is right adjoint to the inclusion $i: \ab'\rightarrow \ab$.
Let $M$ be an object  in $\ab'$ and suppose that $M$ has an injective envelope $i(M)\hookrightarrow I$
in $\ab$, then $M\hookrightarrow F(I)$ is an injective envelope of $M$ in $\ab'$.
\end{lem}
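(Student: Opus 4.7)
The strategy is to verify the three conditions defining an injective envelope in $\ab'$: that $F(I)$ is injective in $\ab'$, that the canonical morphism $M\to F(I)$ is a monomorphism, and that this monomorphism is essential. The first two are soft; the essentialness is the heart of the argument.

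Condition one is the standard principle that a right adjoint to an exact functor preserves injectives. Explicitly, since $i$ is exact (being the inclusion of a full abelian subcategory), the adjunction isomorphism $\Hom_{\ab'}(-,F(I))\cong \Hom_{\ab}(i(-),I)$ realises $\Hom_{\ab'}(-,F(I))$ as the composition of two exact functors, hence exact; so $F(I)$ is injective in $\ab'$. For the monomorphism part, fully faithfulness of $i$ makes the unit $\eta_M:M\to F(i(M))$ an isomorphism, and the map $M\to F(I)$ in question is, by construction, the adjunct of $j:i(M)\hookrightarrow I$, namely the composite $F(j)\circ\eta_M$. Since $F$ is a right adjoint it is left exact, so $F(j)$ is a monomorphism, and the composite is mono.

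For essentialness, the plan is to push nonzero subobjects of $F(I)$ in $\ab'$ back to nonzero subobjects of $I$ in $\ab$ through the counit. Given a nonzero $N\subseteq F(I)$ in $\ab'$, the inclusion corresponds under adjunction to the morphism $\phi:i(N)\to I$ given by $\phi=\epsilon_I\circ i(N\hookrightarrow F(I))$. I would first argue that $\phi$ is itself a monomorphism: using the triangle identity $F(\epsilon_I)\circ\eta_{F(I)}=\id_{F(I)}$ and the fact that $\eta_{F(I)}$ is an isomorphism, one computes $F(\phi)=\iota\circ\eta_N^{-1}$ where $\iota:N\hookrightarrow F(I)$ is the inclusion, so $F(\phi)$ is a monomorphism, and left exactness of $F$ yields $F(\ker\phi)=0$. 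In the situations where the lemma is actually applied, $\ab'$ is closed under subobjects of $\ab$, so $\ker\phi\subseteq i(N)$ lies in the essential image of $i$, and the unit isomorphism then forces $\ker\phi=0$. With $\phi$ mono in hand, the essentialness of $j:i(M)\hookrightarrow I$ in $\ab$ gives $i(M)\cap i(N)\neq 0$ in $I$; exactness of $i$ identifies this intersection with $i(M\cap N)$, whence $M\cap N\neq 0$ in $\ab'$.

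The hard part will be precisely this essentialness step, and within it the monicity of $\phi$, which amounts to controlling the counit $\epsilon_I$ well enough. Abstractly the whole argument rests on the closure of $\ab'$ under subobjects of $\ab$, a mild hypothesis that is automatic in the representation-theoretic settings of the paper, where $F$ is a functor of invariants and $\epsilon_I$ is visibly the inclusion of a subobject.
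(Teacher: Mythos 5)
Your argument follows the same route as the paper's: both establish injectivity of $F(I)$ via $\Hom_{\ab'}(-,F(I)) \cong \Hom_\ab(i(-),I)$ together with exactness of $i$, and both reduce essentialness to the fact that the counit $\epsilon_I\colon iF(I)\to I$ (equivalently, the adjunct $\phi$ of an inclusion $N\subseteq F(I)$) is a monomorphism. Where you are more careful is exactly where the paper is terse: the paper deduces that $\iota_F\colon iF(N)\to N$ is mono from the injectivity of $\iota_F^*$ on $\Hom$-sets from objects of the form $i(M')$, which by itself only gives monicity relative to the full subcategory $i(\ab')$; to conclude $\ker\iota_F=0$ one needs, as you point out, that $\ab'$ is closed under subobjects in $\ab$. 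This hypothesis is not stated in the lemma but holds automatically in both applications in the paper (smooth vectors, $p$-torsion discrete vectors), where $F(N)$ is visibly a submodule of $N$ and $\epsilon_N$ is the inclusion. Your computation $F(\phi)=\iota\circ\eta_N^{-1}$ via naturality and the triangle identity, and the resulting identification of $i(M)\cap i(N)$ with $i(M\cap N)$ inside $I$, are both correct; so the proof is sound, and a bit more explicit than the paper's about the hypothesis actually being used.
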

\begin{proof} We have $\Hom_{\ab'}(M, N)=\Hom_{\ab}(i(M), i(N))= \Hom_{\ab'}(M, F(i(N)))$. So $F\circ i$ is right adjoint to 
the identity, and hence $N$ is canonically isomorphic to $F(i(N))$ for all $N$ in $\ab'$. Let  
$\alpha: i(M)\rightarrow N$ be a morphism in $\ab$. Then for all $M'$ in $\ab'$ the map 
$\alpha^*:\Hom_{\ab}(i(M'), i(M))\rightarrow \Hom_{\ab}(i(M'), N)$ is an injection if and only if  
the map $F(\alpha)^*:\Hom_{\ab'}(M', M)\rightarrow \Hom_{\ab'}(M', F(N))$ is an injection. So $F$ maps monomorphism to monomorphism, 
and for all $N$ in $\ab$ the canonical map $\iota_F: i(F(N))\rightarrow N$ is a monomorphism. Moreover, given 
$\phi\in \Hom_{\ab}(i(M), N)$, we have $\phi= \iota_F\circ i(F(\phi))$, as $F(\iota_F\circ i(F(\phi)))=\id_{\ab'} \circ F(\phi)= F(\phi)$.
Let $i(M)\hookrightarrow N$ be an essential monomorphism in $\ab$, then it factors through $i(M)\hookrightarrow i(F(N))\hookrightarrow N$,
which implies that $M\hookrightarrow F(N)$ is an essential monomorphism in $\ab'$. Since $I$ is injective
the functor $\Hom_{\ab}(\centerdot, I)$ is exact, hence the functor $\Hom_{\ab'}(\centerdot, F(I))$ is exact so $F(I)$ is injective in $\ab'$.
\end{proof} 
  
Using the theorem and the lemma one can obtain a lot of injective envelopes. We give some examples.

1) Let $\GG$ be a topological group. We say that a representation of $\GG$ on a $k$-vector space $V$ is smooth (or discrete), if the action 
of $\GG$ on $V$ is continuous, for the discrete topology on $V$. This is equivalent to saying that for all $v\in V$ the stabilizer 
of $v$ is an open subgroup of $\GG$. We denote the category of smooth $k$-representations of $\GG$ by $\Rep_k(\GG)$. We may view $\Rep_k(\GG)$ 
as a full subcategory of $\Mod(k[\GG])$. If $M$ is in $\Mod(k[\GG])$ we let $F(M)$ be a submodule of $M$ consisting of $v\in M$ such that 
the orbit map $\GG\rightarrow M$, $g\mapsto gv$ is continuous, for the discrete topology on $M$. Then $F$ satisfies the conditions of 
the Lemma \ref{triv}, and hence every object in $\Rep_k(\GG)$ has an injective envelope.

2) Let  $\DD_A(\GG)$ be the category of $p$-torsion $A$-modules $M$ with a continuous action of $\GG$, for the discrete topology on $M$.
Then $\DD_A(\GG)$ is a full subcategory of $\Mod(A[\GG])$, and $F(M)$  is a submodule of $M$ consisting of $v$, 
for which the orbit map is continuous and which are killed by some power of $p$. Again we obtain that every object in $\DD_A(\GG)$ has
an injective envelope. 

Projective envelopes are harder to come by, but in our situation we have two abelian categories $\ab$ and $\ab^{\vee}$ and 
a functor $M\mapsto M^{\vee}$ which induces an anti-equivalence of categories between $\ab$ and $\ab^{\vee}$. One may check that 

\begin{lem} A monomorphism $M\hookrightarrow I$ in $\ab$ is an injective envelope of $M$ in $\ab$ if and only if 
$I^{\vee}\twoheadrightarrow M^{\vee}$ is a projective envelope of $M^{\vee}$ in $\ab^{\vee}$.
\end{lem}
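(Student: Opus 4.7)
The statement is purely formal: it should follow from the fact that $M \mapsto M^\vee$ is an anti-equivalence of abelian categories, which exchanges monomorphisms with epimorphisms and injective objects with projective objects. The only nontrivial point is that the definition of ``essential monomorphism'' given in the text is phrased in terms of subobjects, while ``essential epimorphism'' is phrased in terms of morphisms from arbitrary objects; these two formulations are not obviously dual, so one has to reconcile them first.

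\textbf{Step 1: reformulate essentiality of monomorphisms.} I would first observe the following alternative characterization: a monomorphism $\iota: N \hookrightarrow M$ in an abelian category is essential if and only if for every morphism $g: M \to M''$, the composition $g \iota$ being a monomorphism implies that $g$ is a monomorphism. Indeed, if $g$ is not mono then $M_1 := \Ker g$ is a nonzero subobject of $M$, so essentiality forces $N \cap M_1 \neq 0$, and this intersection lies in $\Ker(g\iota)$; conversely, given a nonzero subobject $M_1 \subset M$ with $N \cap M_1 = 0$, the quotient map $g: M \twoheadrightarrow M/M_1$ satisfies $g\iota$ mono while $g$ is not. This rewritten form is now visibly dual to the definition of essential epimorphism.

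\textbf{Step 2: dualize.} Assume $M \hookrightarrow I$ is an injective envelope in $\ab$. Under the anti-equivalence $(\centerdot)^\vee$, the monomorphism $\iota$ corresponds to an epimorphism $q := \iota^\vee: I^\vee \twoheadrightarrow M^\vee$, and injectivity of $I$ in $\ab$ is equivalent to projectivity of $I^\vee$ in $\ab^\vee$ (because $\Hom_{\ab^\vee}(I^\vee, \centerdot) \cong \Hom_{\ab}(\centerdot, I)$ via the anti-equivalence, and exactness is preserved). It remains to transport essentiality: given any $s: M'^\vee \to I^\vee$ in $\ab^\vee$, write $s = g^\vee$ for a unique $g: I \to M'$ in $\ab$. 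Then $qs = (g\iota)^\vee$, and since the anti-equivalence swaps mono/epi, ``$qs$ is an epimorphism'' translates to ``$g\iota$ is a monomorphism'', which by the reformulation in Step 1 implies ``$g$ is a monomorphism'', i.e.\ ``$s = g^\vee$ is an epimorphism''. Hence $q$ is an essential epimorphism, so it is a projective envelope. The reverse implication is identical, reading everything in the opposite category.

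\textbf{Main obstacle.} There is nothing really hard here; the only delicate point is Step 1, the reformulation of essentiality of a monomorphism in a form that is syntactically dual to that of an essential epimorphism. Once this is in place, the rest is a mechanical application of the anti-equivalence, with no further input needed.
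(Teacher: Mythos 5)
Your proof is correct, and it fills in a step the paper deliberately leaves to the reader: the lemma is introduced in the text with ``One may check that,'' so no proof is supplied at all. The one real content point is exactly your Step 1, namely that essentiality of a monomorphism $\iota: N \hookrightarrow M$, phrased in the paper in terms of subobjects ($N \cap M_1 \neq 0$ for all nonzero $M_1 \subseteq M$), is equivalent to the ``morphism-theoretic'' form: $g\iota$ mono $\Rightarrow$ $g$ mono for every $g$ out of $M$. Your verification of this equivalence is right in both directions (noting that $N \cap M_1$ is, up to the usual identification, precisely $\ker(g\iota)$ when $g$ is the quotient by $M_1$), and once it is in hand the rest is indeed a mechanical transport across the anti-equivalence, which swaps mono/epi and injective/projective. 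This is the argument the author surely has in mind.
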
  

In the following let $\GG$ be a profinite group with an open pro-$p$ subgroup. We discuss the structure of injective envelopes in $\Rep_k(\GG)$.

\begin{defi} A smooth $k$-representation $V$ of $\GG$ is called admissible if for every  open pro-$p$ subgroup $\PP$ of $\GG$ the subspace 
$V^{\PP}:=\{v\in V: gv= v, \forall g\in \PP\}$ is finite dimensional.
\end{defi} 
In fact, it is enough to check this for one open pro-$p$ group, see \cite{coeff} Theorem 6.3.2. If $\PP$ is an open normal pro-$p$ group
of $\GG$ and $S$ is irreducible, then $\PP$ acts trivially on $S$, since $S^{\PP}$ is a non-zero subrepresentation of $S$. The irreducible representations
of $\GG$ coincide with the irreducible representations of a finite group $\GG/\PP$. In particular, the set $\Irr(\GG)$ of the irreducible representations is
finite. 

\begin{lem}\label{nice} Let $V$ be a smooth representation of $\GG$ then $V$ is admissible if and only if the space $\Hom_{\GG}(S,V)$ is finite dimensional 
for all irreducible representations $S$ of $\GG$.
\end{lem}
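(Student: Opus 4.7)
The plan is to fix an open normal pro-$p$ subgroup $\PP$ of $\GG$, obtained for instance by intersecting the finitely many conjugates of any given open pro-$p$ subgroup, and to reduce the statement to a fact about $V^{\PP}$ as a module over the finite-dimensional $k$-algebra $R:=k[\GG/\PP]$. By the already cited Theorem 6.3.2 of \cite{coeff}, $V$ is admissible if and only if $V^{\PP}$ is finite dimensional, and the simple $R$-modules are precisely the finite set $S_1,\ldots,S_n$ of irreducible smooth $\GG$-representations on which $\PP$ acts trivially.

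The forward implication is immediate: if $V$ is admissible and $S$ is any irreducible smooth $\GG$-representation, then $S=S^{\PP}$ is finite dimensional as already remarked in the excerpt, every $\GG$-equivariant map $S\to V$ lands in $V^{\PP}$, and hence $\Hom_{\GG}(S,V)\subseteq \Hom_k(S,V^{\PP})$ has finite $k$-dimension.

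For the converse, I would assume $\Hom_{\GG}(S_i,V)=\Hom_R(S_i,V^{\PP})$ is finite dimensional for each $i$ and first analyze the socle of $V^{\PP}$. In the isotypic decomposition $\soc(V^{\PP})=\bigoplus_i S_i^{(J_i)}$ one has $\Hom_R(S_i,\soc(V^{\PP}))=\End_R(S_i)^{J_i}$; since $\End_R(S_i)$ is a finite-dimensional division $k$-algebra and $\Hom_R(S_i,\soc(V^{\PP}))=\Hom_R(S_i,V^{\PP})$, the hypothesis forces each $J_i$ to be finite, so $\soc(V^{\PP})$ is finite dimensional. Because $R$ is Artinian, every nonzero submodule of $V^{\PP}$ contains a simple submodule, so $\soc(V^{\PP})$ is essential in $V^{\PP}$ and $V^{\PP}$ embeds into the injective envelope of its socle, namely $\bigoplus_i I(S_i)^{(J_i)}$. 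Each $I(S_i)$ is a direct summand of the finite-dimensional $R$-bimodule $R^*=\Hom_k(R,k)$, hence finite dimensional, and so $V^{\PP}$ itself is finite dimensional.

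The only non-formal ingredient is this final module-theoretic input over the finite-dimensional algebra $R$ — essentiality of the socle in any module, and finite-dimensionality of the injective envelopes of simples — which is standard for Artinian $k$-algebras but worth invoking explicitly; once it is in hand, the lemma follows from the two easy displays above.
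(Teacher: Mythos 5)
Your proof is correct, but for the converse direction it takes a genuinely different route from the paper. You work inside the category of modules over $R=k[\GG/\PP]$, show first that $\soc(V^{\PP})$ is finite-dimensional by bounding multiplicities via $\Hom_R(S_i,V^{\PP})=\Hom_{\GG}(S_i,V)$, and then embed $V^{\PP}$ into the (finite-dimensional) injective envelope of its socle, using that socles are essential over Artinian rings and that each $I(S_i)$ is a summand of $R^*$. The paper instead argues by a short induction on composition length to show that $\Hom_{\GG}(M,V)$ is finite-dimensional for \emph{every} $M$ of finite length, and then simply observes that $\Ind_{\PP}^{\GG}\Eins$ has finite length, so Frobenius reciprocity gives $V^{\PP}\cong\Hom_{\PP}(\Eins,V)\cong\Hom_{\GG}(\Ind_{\PP}^{\GG}\Eins,V)$ is finite-dimensional. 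The paper's route is more economical and uses no structure theory beyond left-exactness of $\Hom$; your route is also standard but brings in the machinery of essential extensions and injective envelopes of $R$-modules. (A minor notational quibble: in the isotypic decomposition the Hom space is a direct sum $\End_R(S_i)^{(J_i)}$ rather than a product, since $S_i$ is finitely generated; this of course still forces $J_i$ to be finite.) Both arguments establish the lemma.
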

\begin{proof} Let $\PP$ be an open normal pro-$p$ subgroup of $\GG$, then since $\PP$ acts trivially on $S$, we have 
$\Hom_{\GG}(S, V)\cong \Hom_{\GG}(S, V^{\PP})$ and this space is finite dimensional. Suppose that $\Hom_{\GG}(S,V)$ is finite dimensional 
for all irreducible representations $S$. Then arguing inductively we get that $\Hom_{\GG}(M, V)$ is finite dimensional for all representations 
$M$ of finite length. In particular, 
$$V^{\PP}\cong \Hom_{\PP}(\Eins, V)\cong \Hom_{\GG}(\Indu{\PP}{\GG}{\Eins}, V)$$
is finite dimensional.
\end{proof}

\begin{lem}\label{decomposeinj} Let $V$ be an admissible smooth representation of $\GG$. For each irreducible representations $S$ of $\GG$ set 
$m_S:=\dim_k\Hom_{\GG}(S,V)$. Let $V\hookrightarrow I$ be an injective envelope of $V$ in $\Rep_k(\GG)$. Then $I$ is admissible and 
$I\cong \oplus_{S\in \Irr(\GG)} I_S^{\oplus m_S}$, where $I_S$ is an injective envelope of $S$ in $\Rep_k(\GG)$.
\end{lem}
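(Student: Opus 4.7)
The plan is to reduce everything to the statement about the socle and then invoke general facts about essential extensions and finite direct sums of injective envelopes.

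First I would analyse the socle $\soc V$. Fix an open normal pro-$p$ subgroup $\PP$ of $\GG$. Any irreducible $k$-representation of $\GG$ has $\PP$ acting trivially, so $\soc V \subseteq V^{\PP}$, which is finite dimensional by admissibility. Since $\Irr(\GG)$ is finite and $\Hom_{\GG}(S,V) = \Hom_{\GG/\PP}(S, V^{\PP})$ has dimension $m_S$, one concludes
\[
\soc V \;\cong\; \bigoplus_{S\in\Irr(\GG)} S^{\oplus m_S},
\]
a finite direct sum. I would next check that $\soc V \hookrightarrow V$ is an essential monomorphism: for any nonzero subrepresentation $W\subseteq V$, the space $W^{\PP}$ is nonzero (smoothness plus $\PP$ pro-$p$), and as a representation of the finite group $\GG/\PP$ it contains an irreducible subrepresentation, which necessarily lies in $\soc V \cap W$.

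Next, composing the essential monomorphisms $\soc V \hookrightarrow V \hookrightarrow I$ I get an essential monomorphism $\soc V \hookrightarrow I$ with $I$ injective, so $I$ is also an injective envelope of $\soc V$. A finite direct sum of injective envelopes is an injective envelope of the direct sum (injectivity is preserved by finite direct sums in $\Rep_k(\GG)$, and essentiality of each factor passes to the direct sum because one can check the intersection condition one summand at a time). Hence $\bigoplus_{S} I_S^{\oplus m_S}$ is an injective envelope of $\soc V$, and by uniqueness up to isomorphism I conclude
\[
I \;\cong\; \bigoplus_{S\in\Irr(\GG)} I_S^{\oplus m_S}.
\]

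It remains to check admissibility. By Lemma \ref{nice} it suffices to show that $\Hom_{\GG}(T, I_S)$ is finite dimensional for every $T\in\Irr(\GG)$. Since $I_S$ is an injective envelope of $S$, essentiality forces $\soc I_S = S$; every morphism from an irreducible $T$ into $I_S$ has image in $\soc I_S$, so $\Hom_{\GG}(T, I_S) = \Hom_{\GG}(T, S)$, which is $0$ if $T\not\cong S$ and a finite dimensional division algebra over $k$ if $T\cong S$ (the latter is finite dimensional because $S$ itself is). Thus each $I_S$ is admissible; since $\Irr(\GG)$ is finite and each $m_S<\infty$, the direct sum $I$ is admissible as well.

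The only subtle point I foresee is the claim that a finite direct sum of essential monomorphisms is essential; this is straightforward but worth writing out carefully, since it is the bridge between the socle decomposition and the decomposition of $I$ itself. Everything else is a bookkeeping exercise with the definitions and Lemma \ref{nice}.
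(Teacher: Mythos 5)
Your proof is correct and follows essentially the same route as the paper's: identify the semisimple socle $\bigoplus_S S^{\oplus m_S}$, show $I$ is an essential extension of it, and invoke uniqueness of injective envelopes together with the (finite-direct-sum) compatibility of injective envelopes. The only differences are cosmetic reorderings: the paper computes $\soc_{\GG} I$ directly (via $\Hom_{\GG}(S,V)\cong\Hom_{\GG}(S,I)$, which is the same essentiality observation you use) and then gets admissibility of $I$ immediately from Lemma~\ref{nice}, whereas you compute $\soc_{\GG} V$, compose essential monomorphisms, and derive admissibility afterwards from the decomposition by bounding $\Hom_{\GG}(T,I_S)$.
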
 
\begin{proof} For each irreducible $S$ we have $\Hom_{\GG}(S,V)\cong \Hom_{\GG}(S,I)$, otherwise $S$ would be a nonzero subspace of $I$, such 
that $S\cap V=0$. Lemma \ref{nice} implies that $I$ is admissible. So the maximal semisimple subobject $\soc_{\GG} I$ (socle) of $I$,  is isomorphic 
to $\oplus S^{\oplus m_S}$. Now $I$ is an essential extension of $\soc_\GG I$, since if $W$ is a $\GG$-invariant subspace of $I$ such that 
$W\cap \soc_{\GG} I=0$ then $\soc_{\GG} W=0$. This implies that if  $\PP$ is an open normal subgroup of $\GG$ then $W^{\PP}=0$, and hence $W=0$.
One easily checks that $\oplus_{S\in \Irr(\GG)} I_S^{\oplus m_S}$ is an injective envelope of $\oplus S^{\oplus m_S}$, and the uniqueness of 
injective envelopes implies the claim.
\end{proof}

\begin{lem}\label{injtrivial} Let $\PP$ be a pro-$p$ group and let $C(\PP,k)$ be the space of continuous functions form $\PP$ to $k$. Then 
$\Eins \hookrightarrow C(\PP,k)$ is an injective envelope of $\Eins$ in $\Rep_k(\GG)$.
\end{lem}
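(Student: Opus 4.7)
The plan is to check two things: that $C(\PP,k)$ is an injective object in $\Rep_k(\PP)$, and that the inclusion $\Eins\hookrightarrow C(\PP,k)$ as the subspace of constant functions is essential. The conclusion then follows from the definition of an injective envelope.

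For injectivity, I would first identify $C(\PP,k)$, with $\PP$ acting by right translation $(g\cdot f)(x)=f(xg)$, as the smooth induction $\Ind_{\{1\}}^{\PP}\Eins$. Indeed, a function $f:\PP\to k$ has open stabilizer under right translation precisely when it is locally constant, i.e.\ continuous for the discrete topology on $k$. The map $\Hom_{\PP}(V, C(\PP,k))\to V^{*}$ sending $\phi$ to $v\mapsto \phi(v)(1)$ is then a natural isomorphism, with inverse $\ell\mapsto\bigl(v\mapsto (g\mapsto \ell(gv))\bigr)$; the target lies in $C(\PP,k)$ because $v$ has open stabilizer by smoothness, so $g\mapsto \ell(gv)$ is locally constant. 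Since linear duality over the field $k$ is exact, the functor $\Hom_{\PP}(-,C(\PP,k))$ is exact, so $C(\PP,k)$ is injective in $\Rep_k(\PP)$.

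For essentiality I would use the standard fact that every nonzero $V$ in $\Rep_k(\PP)$ has a nonzero $\PP$-fixed vector. Given $0\neq v\in V$, its stabilizer is open in the compact group $\PP$ and therefore of finite index, so the subrepresentation it generates is finite dimensional and the action factors through a finite quotient of $\PP$. Since $\PP$ is pro-$p$, this quotient is a finite $p$-group; as $\mathrm{char}\,k=p$, any action of a finite $p$-group on a nonzero finite-dimensional $k$-vector space has a nonzero fixed vector. Applied to any nonzero subrepresentation $W\subseteq C(\PP,k)$, this gives $W^{\PP}\neq 0$. But $C(\PP,k)^{\PP}$ is the space of functions constant on $\PP$, which is exactly $\Eins$, so $W\cap\Eins=W^{\PP}\neq 0$, proving the inclusion is essential.

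The argument is essentially routine; the only real subtleties are the identification of smooth (right-translation) functions with continuous functions and the fact that open subgroups of the compact group $\PP$ have finite index, both of which are built into the conventions of the paper. No step looks likely to present a genuine obstacle.
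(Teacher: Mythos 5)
Your proof is correct and follows essentially the same two-step strategy as the paper: injectivity via Frobenius reciprocity identifying $\Hom_{\PP}(V,C(\PP,k))\cong V^*$ (exact because linear duality over a field is exact), and essentiality via $C(\PP,k)^{\PP}=\Eins$ combined with the fact that every nonzero smooth $k$-representation of a pro-$p$ group has a nonzero fixed vector. You have simply spelled out in detail the steps the paper leaves implicit.
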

\begin{proof} Now $C(\PP,k)^{\PP}$ is just the space of constant functions, so it is one dimensional. Hence, $\Eins\hookrightarrow C(\PP,k)$
is an essential monomorphism. If $V$ in $\Rep_k(\GG)$ then $\Hom_{\GG}(V, C(\PP,k))\cong \Hom_k(V,k)$ by Frobenius reciprocity.
Hence  the functor $\Hom_{\GG}(\centerdot, C(\PP,k))$ is exact and so $C(\PP,k)$ is injective.
\end{proof}

\begin{lem}\label{rat0} Let $\GG$ be a profinite group with an open pro-$p$ subgroup.  Suppose that every 
irreducible smooth $k$-representation of $\GG$ is absolutely irreducible. Let $S$ be an irreducible smooth $k$-representation 
of $\GG$ and $I_S$ be an injective envelope of $S$ in $\Rep_k \GG$. Let $k'$ be an extension of $k$, then $I_S\otimes_k k'$ is an injective 
envelope of $S\otimes_k k'$ in $\Rep_{k'} \GG$.
\end{lem}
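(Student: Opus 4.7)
The plan is to verify the two defining properties of an injective envelope: that $I_S \otimes_k k'$ is injective in $\Rep_{k'}\GG$, and that the inclusion $S \otimes_k k' \hookrightarrow I_S \otimes_k k'$ is essential. Absolute irreducibility of $S$ immediately gives that $S \otimes_k k'$ is irreducible over $k'$.

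For essentiality, I would show $\soc_{\GG}(I_S \otimes_k k') = S \otimes_k k'$. Let $T'$ be any irreducible $k'[\GG]$-subrepresentation of $I_S \otimes_k k'$. Its $k[\GG]$-socle is $k'$-stable by functoriality and nonzero, hence equals $T'$ by irreducibility; so $T'$ is semisimple as a $k[\GG]$-module. Fixing a $k$-basis $\{f_\alpha\}$ of $k'$, one has $I_S \otimes_k k' \cong \bigoplus_\alpha I_S$ as $k[\GG]$-modules, and every $k$-irreducible summand of $T'$ embeds via a suitable projection into $I_S$. The identity $\soc_\GG I_S = S$ then forces each such summand to be isomorphic to $S$. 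The $(-\otimes_k k',\ R)$ adjunction (with $R:\Rep_{k'}\GG \to \Rep_k\GG$ the restriction of scalars) gives
\[
\Hom_{k'[\GG]}(S \otimes_k k', T') \cong \Hom_{k[\GG]}(S, T') \neq 0,
\]
so $T' \cong S \otimes_k k'$. The same adjunction computes $\Hom_{k'[\GG]}(S \otimes_k k', I_S \otimes_k k') \cong \Hom_{k[\GG]}(S, I_S) \otimes_k k' \cong k'$, giving multiplicity one in the socle. Since any nonzero subrepresentation of $I_S \otimes_k k'$ contains an irreducible subobject (by smoothness and the finiteness of $\Irr(\GG/\PP')$ for any open normal pro-$p$ subgroup $\PP'$), essentiality follows.

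For injectivity, when $[k':k]<\infty$ the isomorphism $V \otimes_k k' \cong \Hom_k(k', V)$ exhibits $-\otimes_k k'$ as a right adjoint to the exact functor $R$, so this functor preserves injectives. For a general extension I would let $J'$ be the injective envelope of $S \otimes_k k'$ in $\Rep_{k'}\GG$; by injectivity of $J'$ and the essentiality just shown, the inclusion $S \otimes_k k' \hookrightarrow J'$ extends to an embedding $\iota: I_S \otimes_k k' \hookrightarrow J'$. Taking $\PP'$-invariants for an open normal pro-$p$ subgroup $\PP'$ of $\GG$ that acts trivially on $S$ yields an inclusion of two injective envelopes of $S\otimes_k k'$ in the category of modules over the finite-dimensional Frobenius algebra $k'[\GG/\PP']$:
\[
I_S^{\PP'} \otimes_k k' \hookrightarrow (J')^{\PP'}.
\]
The absolute irreducibility hypothesis makes $k$ a splitting field for every finite quotient $\GG/\PP'$; standard theory (the endomorphism ring of an indecomposable projective is local with residue field $k$, hence remains local after tensoring with $k'$) shows that injective envelopes of irreducibles base-change correctly from $k$ to $k'$. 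Both sides above are therefore finite-dimensional injective envelopes of the same irreducible $k'[\GG/\PP']$-module, have equal $k'$-dimension, and the inclusion is an isomorphism. Taking the union over $\PP'$ makes $\iota$ an isomorphism.

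The main obstacle is the infinite-extension case for injectivity, which demands the reduction to finite quotients $\GG/\PP'$ together with the good behavior of injective envelopes under base change over splitting fields; for the applications in this paper only finite $k'/k$ intervenes, and the direct adjunction argument alone suffices.
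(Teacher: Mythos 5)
Your argument is correct, but it takes a genuinely different route from the paper. The paper's proof is a global one: it applies Lemma \ref{decomposeinj} to $C(\GG,k)$ to get $C(\GG,k)\cong\bigoplus_S I_S^{\dim_k S}$, observes $C(\GG,k)\otimes_k k'\cong C(\GG,k')$, and then reads off the conclusion by comparing this with the analogous decomposition of $C(\GG,k')$ over $k'$, using uniqueness of such decompositions. Your proof is local: you verify the two defining properties of an injective envelope directly. For essentiality you compute $\soc_\GG(I_S\otimes_k k')$ by hand via restriction to $k[\GG]$ and the $(\otimes_k k',R)$ adjunction; for injectivity you split into a finite-degree case (where $-\otimes_k k'\cong\Hom_k(k',-)$ is right adjoint to the exact restriction $R$, hence preserves injectives) and a general case (reduce to finite quotients $\GG/\PP'$ via $(-)^{\PP'}$, invoke the splitting-field base-change theory for finite-dimensional algebras, and take the union over $\PP'$). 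Both routes are sound. The paper's is shorter, given that Lemma \ref{decomposeinj} is already available, and it handles the infinite extension case with no extra work. Yours is more elementary and self-contained at the level of the socle, and it makes explicit a fact that the paper instead derives afterwards as Lemma \ref{ratsoc} (base change of socles): in that sense your essentiality step bundles the two lemmas. One small remark: your general-case injectivity argument essentially re-derives, for each finite quotient $\GG/\PP'$, the finite-dimensional analogue of the statement being proved (indecomposable injectives over a split finite-dimensional algebra base change correctly because $\End(I_S^{\PP'})$ is local with residue field $k$); that is fine, but it is doing in every finite layer what the paper's $C(\GG,k)$ trick does once globally.
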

\begin{proof} Lemma \ref{decomposeinj} gives an isomorphism $C(\GG, k)\cong \oplus_S I_S^{\dim_k S}$. Now 
$$C(\GG, k')\cong C(\GG, k)\otimes_k k'\cong \bigoplus_S (I_S \otimes_k k')^{\dim_k S}.$$
Since $I_S\otimes_k k'$ is a direct summand of an injective object in $\Rep_{k'} \GG$, it is also injective. Since 
we have an injection $S\otimes_k k'\hookrightarrow I_{S}\otimes_k k'$, the injective envelope $I_{S\otimes_k k'}$ of $S\otimes_k k'$ will be 
isomorphic to a direct summand of $I_{S}\otimes_k k'$. By assumption $S\otimes_k k'$ is irreducible, hence 
 $C(\GG, k')\cong \oplus_S I_{S\otimes_k k'}^{\dim_k S}$. This implies $I_{S\otimes_k k'}\cong I_S\otimes_k k'$.
\end{proof} 
 
\begin{lem}\label{ratsoc} Let $\GG$ be a profinite group with an open pro-$p$ subgroup. Let $V$ be an admissible $k$-representation of $\GG$, 
and let $k'$ be an extension of $k$. Suppose that every irreducible $k$-representation of $\GG$ is absolutely irreducible then 
$(\soc_{\GG} V)\otimes_k k'\cong \soc_{\GG}(V\otimes_k k')$.
\end{lem}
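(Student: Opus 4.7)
The plan is to embed $V$ into an injective envelope and reduce the socle computation to the explicit decomposition of injective envelopes provided by Lemma \ref{decomposeinj}, combined with the base-change statement of Lemma \ref{rat0}. First I would pick an injective envelope $\iota : V \hookrightarrow I$ in $\Rep_k \GG$ and observe that $\soc_\GG I = \soc_\GG V$: the inclusion $\soc_\GG V \subseteq \soc_\GG I$ holds because $\soc_\GG V$ is a semisimple subobject of $I$, while the reverse follows from essentiality of $\iota$, since any irreducible subobject of $I$ meets $V$ nontrivially and therefore lies in $V$. Lemma \ref{decomposeinj} then gives
\[
I \cong \bigoplus_{S \in \Irr(\GG)} I_S^{\oplus m_S}, \qquad m_S := \dim_k \Hom_\GG(S, V).
\]

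Next I would apply $(-) \otimes_k k'$. By Lemma \ref{rat0} we have $I_S \otimes_k k' \cong I_{S \otimes_k k'}$ in $\Rep_{k'} \GG$, so
\[
I \otimes_k k' \cong \bigoplus_{S \in \Irr(\GG)} I_{S \otimes_k k'}^{\oplus m_S},
\]
and since the socle of an injective envelope of an irreducible is that irreducible,
\[
\soc_\GG (I \otimes_k k') \cong \bigoplus_{S} (S \otimes_k k')^{\oplus m_S} \cong (\soc_\GG V) \otimes_k k'.
\]
The embedding $V \otimes_k k' \hookrightarrow I \otimes_k k'$ (exactness of $(-) \otimes_k k'$) then yields
\[
\soc_\GG (V \otimes_k k') \subseteq \soc_\GG (I \otimes_k k') = (\soc_\GG V) \otimes_k k'.
\]

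For the reverse inclusion, the subspace $(\soc_\GG V) \otimes_k k' \subseteq V \otimes_k k'$ is semisimple over $k'$, because each summand $S$ in the decomposition $\soc_\GG V = \bigoplus_S S^{\oplus m_S}$ remains irreducible after tensoring with $k'$ by the absolute irreducibility hypothesis; hence it is contained in $\soc_\GG(V \otimes_k k')$. The two inclusions give the desired equality. The principal step is the identification $\soc_\GG(I \otimes_k k') = (\soc_\GG V) \otimes_k k'$, which rests on Lemmas \ref{decomposeinj} and \ref{rat0}; beyond that, the argument is a routine two-sided containment with no real obstacle.
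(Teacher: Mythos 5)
Your proof is correct and follows essentially the same path as the paper's: both embed $V$ into an injective envelope $I$, invoke Lemma \ref{decomposeinj} to decompose $I$ and Lemma \ref{rat0} to base-change the indecomposable injectives, and then compare multiplicities. The paper phrases the key step as showing the injective envelope $I'$ of $V\otimes_k k'$ is a direct summand of $I\otimes_k k'$ and reading off $m'_S\le m_S$, whereas you identify $\soc_\GG(I\otimes_k k')$ explicitly and sandwich $\soc_\GG(V\otimes_k k')$ between $(\soc_\GG V)\otimes_k k'$ on both sides; this is only a difference of bookkeeping (you might spell out that the abstract isomorphism $\soc_\GG(I\otimes_k k')\cong(\soc_\GG V)\otimes_k k'$ upgrades to an equality of subobjects because $(\soc_\GG V)\otimes_k k'$ is a semisimple subobject of $I\otimes_k k'$ and the two have the same finite dimension, by admissibility).
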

\begin{proof} For an irreducible smooth $k$-representation $S$ of $\GG$, set 
$$m_S:=\dim_k \Hom_{\GG}(S, V), \quad m'_S:=\dim_{k'}\Hom_{\GG}(S\otimes_k k', V\otimes_k k').$$
 We have to show that $m_S=m'_S$ for all $S$. Clearly $m_S\le m'_S$. 
Let $I$ be an injective envelope of $V$ in $\Rep_k \GG$ and let $I'$ be an  injective envelope of $V\otimes_k k'$ in 
$\Rep_{k'} \GG$. It follows from Lemmas \ref{decomposeinj} and \ref{rat0} that $I\otimes_k k'$ is injective in $\Rep_{k'} \GG$.
Since we have an injection $V\otimes_k k'\hookrightarrow I\otimes_k k'$, $I'$ is isomorphic to a direct summand of $I\otimes_k k'$. 
Lemmas \ref{decomposeinj} and \ref{rat0} imply that $m'_S\le m_S$. 
\end{proof} 

 \begin{lem}\label{rat1} Let $\PP$ be a pro-$p$ group and  let $V$ be a smooth admissible $k$-representation of $\PP$. 
Let $k'$ be an extension of $k$, then 
$V^{\PP}\otimes_k k'\cong (V\otimes_k k')^{\PP}$.
\end{lem}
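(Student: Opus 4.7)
The plan is to obtain Lemma \ref{rat1} as a specialization of Lemma \ref{ratsoc} to the pro-$p$ setting. The key observation is that for a pro-$p$ group, fixed vectors and socle coincide, so the general statement about socles commuting with base change specializes to the desired statement about fixed vectors.

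First, I would note that any irreducible smooth $k$-representation of a pro-$p$ group $\PP$ factors through a finite $p$-group quotient, and any nonzero smooth representation of a finite $p$-group over a field of characteristic $p$ has a nonzero fixed vector. Hence the only irreducible smooth $k$-representation of $\PP$ is the trivial representation $\Eins$, and the same holds with $k$ replaced by $k'$. Since $\Eins$ is one-dimensional it is automatically absolutely irreducible, so the hypothesis of Lemma \ref{ratsoc} is satisfied for $\GG=\PP$.

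Second, for any smooth $k$-representation (or $k'$-representation) $W$ of $\PP$, the socle $\soc_\PP W$ is the sum of its simple subrepresentations, which by the previous paragraph is the sum of copies of the trivial representation, i.e.\ precisely $W^{\PP}$. Applying this to $V$ and to $V\otimes_k k'$ gives $\soc_\PP V = V^{\PP}$ and $\soc_\PP(V\otimes_k k') = (V\otimes_k k')^{\PP}$. Lemma \ref{ratsoc} then yields $V^{\PP}\otimes_k k' \cong (V\otimes_k k')^{\PP}$, as required. The one mildly subtle point is that Lemma \ref{ratsoc} is invoked with the representation $V\otimes_k k'$ of $\PP$ over $k'$; one checks that the proof of Lemma \ref{ratsoc} still applies, since injective envelopes exist in $\Rep_{k'}\PP$ in full generality, and the admissibility of the relevant injectives propagates from that of $V$.

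I expect no serious obstacle here; this is essentially a direct corollary. As an entirely elementary alternative not relying on any of the previous machinery, one can write any $x \in (V\otimes_k k')^{\PP}$ as $x = \sum_i v_i \otimes e_i$ where $\{e_i\}$ is a $k$-basis of $k'$; $\PP$-invariance of $x$ combined with $k$-linear independence of the $e_i$ then forces $gv_i = v_i$ for all $g\in \PP$, so each $v_i \in V^{\PP}$, giving the non-trivial inclusion $(V\otimes_k k')^{\PP} \subseteq V^{\PP}\otimes_k k'$; the reverse inclusion is obvious.
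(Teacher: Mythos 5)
Your main argument is exactly the paper's proof: the only irreducible smooth $k$-representation of a pro-$p$ group is the trivial one (which is absolutely irreducible), so $\soc_{\PP}V = V^{\PP}$ and Lemma \ref{ratsoc} gives the claim. Your elementary alternative at the end is also correct and in fact shows the statement holds without any smoothness or admissibility hypothesis, but the paper's route via Lemma \ref{ratsoc} is the one taken.
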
       
\begin{proof} The only irreducible smooth $k$-representation of $\PP$ is the trivial representation, which is absolutely irreducible.
Since $\soc_{\PP}V=V^{\PP}$ Lemma \ref{ratsoc} implies the assertion.
\end{proof}

\section{Modules over completed group algebras}\label{modules}
Let $\GG$ be a pro-finite group with an open pro-$p$ subgroup. We define the completed group algebras:
$$A[[\GG]]:=\underset{\leftarrow}\lim \ A[\GG/\PP]\cong \underset{\leftarrow}{\lim} \  A/\MM^n[\GG/\PP], \quad  k[[\GG]]:=\underset{\leftarrow}\lim \ k[\GG/\PP],$$
where the limit runs over all open normal pro-$p$ subgroups and natural numbers $n$. We put the discrete topology on $A/\MM^n[\GG/\PP]$ (resp. $k[\GG/\PP]$) and inverse 
limit topology on $A[[\GG]]$ (resp. $k[[\GG]]$). So for all open normal pro-$p$ subgroups $\PP$ and all $n\ge 1$ the kernels
 of $A[[\GG]]\rightarrow A/\MM^n[\GG/\PP]$
(resp. $k[[\GG]]\rightarrow k[\GG/\PP]$) form a basis of open neighbourhoods of zero. Since $k$ is a finite field $A/\MM^n[\GG/\PP]$ (resp. $k[\GG/\PP]$) is 
finite, hence $A[[\GG]]$ (resp. $k[[\GG]]$) is compact. In the following let $\Lambda$ be either $A[[\GG]]$ or $k[[\GG]]$.  
Let $\CC(\Lambda)$
denote the category of topological, Hausdorff, complete $\Lambda$-modules $M$, such that $M$ has a system of open neighbourhoods
of $0$ consisting of submodules $N$, for which $M/N$ has finite length, with 
morphisms continuous $\Lambda$-homomorphisms. Since $k$ is a finite field, such modules $M$ are compact. Recall that a topological 
$\Lambda$-module $M$ is linearly topological if $0$ has a fundamental system of open neighbourhoods consisting 
of $\Lambda$-submodules. Equivalently the category $\CC(\Lambda)$ could be defined as a category of linearly topological compact Hausdorff
$\Lambda$-modules. Let $\DD(\Lambda)$ be the category 
of discrete $\Lambda$-modules, a discrete module is always $p$-torsion. Moreover, $\DD(k[[\GG]])=\Rep_k(\GG)$.  

We recall some facts about these categories. The category $\CC(\Lambda)$ is abelian 
with exact inverse limits, \cite[Thm 3]{gab}. The category $\DD(\Lambda)$ is abelian with exact direct limits, \cite{bru} Lem 1.8. 
If  $M$ is an object of $\DD(\Lambda)$ or $\CC(\Lambda)$, then define $M^{\vee}:=\Hom_{A}^{cont}(M, L/A)$, with the 
discrete topology on $L/A$ and compact open topology on $M^{\vee}$. Then $M^{\vee\vee}\cong M$ and  
the functor $\Hom_{A}^{cont}(\centerdot, L/A)$ induces an anti-equivalence of categories between $\CC(\Lambda)$ and $\DD(\Lambda)$. 

Every object in $\DD(\Lambda)$ has an injective envelope, dually every object in 
$\CC(\Lambda)$ has a projective envelope. Let $M$ and $N$ be projective 
objects in $\CC(\Lambda)$. Denote by $\rad(M)$ the intersection of all maximal subobjects in $M$. Then 
$M\cong N$ if and only if $M/\rad(M)\cong N/\rad(N)$. Moreover, a projective object is indecomposable if and only if $M/\rad(M)$ is 
simple, \cite{demgab} V.2.4.6 b). This is equivalent to: a projective module is indecomposable if and only if it is a projective envelope
of an irreducible module.   Every indecomposable projective 
module in $\CC(\Lambda)$ is isomorphic to a direct summand of $\Lambda$, \cite{gab} \S3 Cor 1.

\begin{thm}[\cite{demgab} V.2.4.5]\label{proj}The following hold:
\begin{itemize} 
\item[(a)] every projective object in $\CC(\Lambda)$ is isomorphic to a direct product $\prod_{i\in I} P_i$, for 
some set $I$, with $P_i$ projective indecomposable objects;
\item[(b)] with the notations of (a), if $Q$ is  a projective object in $\CC(\Lambda)$ and $q:\prod_{i\in I} P_i\twoheadrightarrow Q$ is an
epimorphism then there exists a subset $J$ of $I$ such that $q_J: \prod_{i\in I} P_i\rightarrow Q\oplus (\prod_{j\in J} P_j)$
induced by $q$ and the canonical projections $\prod_{i\in I} P_i\rightarrow P_j$ is an isomorphism;
\item[(c)] suppose that $\prod_{i\in I} P_i\cong \prod_{j\in J} Q_j$ with $Q_j$ projective indecomposable then 
there exists a bijection $h: I\rightarrow J$ such that $P_i\cong Q_{h(i)}$ for all $i$.
\end{itemize}
\end{thm}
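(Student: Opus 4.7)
The plan is to reduce all three statements to structural facts about the semisimple quotient $M/\rad(M)$ of an object $M$ in $\CC(\Lambda)$, based on three ingredients already in play: existence of projective envelopes of simple objects (dual to injective envelopes in $\DD(\Lambda)$, as supplied by the excerpt); the topological Nakayama lemma in $\CC(\Lambda)$ (a morphism whose reduction modulo $\rad$ is an isomorphism is itself an isomorphism, since its cokernel would otherwise equal its own radical); and the decomposition of a semisimple object of $\CC(\Lambda)$ as a direct product $\prod_i S_i$ of simples, which is dual to the familiar direct sum decomposition of semisimple objects in $\DD(\Lambda)$.

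For (a), given a projective $P$, I would write $P/\rad(P)\cong \prod_{i\in I} S_i$ with each $S_i$ simple, choose a projective envelope $P_i\twoheadrightarrow S_i$ of each, and consider the product $\prod_{i\in I} P_i$. This product is again projective since inverse limits are exact in $\CC(\Lambda)$. Projectivity of $\prod_i P_i$ lifts the canonical surjection onto $P/\rad(P)$ to a morphism $\phi:\prod_i P_i\to P$, and projectivity of $P$ produces $\psi:P\to \prod_i P_i$ in the other direction, covering the inverse isomorphism on the semisimple quotients. Both compositions $\phi\psi$ and $\psi\phi$ reduce to the identity modulo $\rad$, so by topological Nakayama they are automorphisms, yielding $P\cong \prod_i P_i$ and decomposing $P$ into indecomposable projective direct factors since each $P_i$ is a projective envelope of a simple.

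For (b), the given surjection $q:\prod_i P_i\twoheadrightarrow Q$ induces $\bar q:\prod_i S_i\twoheadrightarrow Q/\rad(Q)$, where $S_i:=P_i/\rad(P_i)$. Since the source is semisimple, $\Ker\bar q$ is a direct summand; matching isomorphism types of simples (each simple appears in a semisimple object with a well-defined cardinal multiplicity) lets me identify $\Ker\bar q\cong \prod_{j\in J} S_j$ for a suitable subset $J\subseteq I$. Part (a) applied to $Q$ then produces a complementary decomposition $Q\cong \prod_{i\in I\setminus J} P_i$, and the composite $q_J$ reduces modulo $\rad$ to this isomorphism, hence is an isomorphism by Nakayama. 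For (c), the given isomorphism $\prod_i P_i\cong \prod_j Q_j$ descends to $\prod_i S_i\cong \prod_j T_j$ with $S_i,T_j$ simple; the invariance of the cardinal multiplicity of each isomorphism class of simple in a semisimple object supplies the bijection $h:I\to J$ with $S_i\cong T_{h(i)}$, and uniqueness of projective envelopes promotes this to $P_i\cong Q_{h(i)}$.

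The hardest part will be handling the topological profinite setting honestly: semisimple objects in $\CC(\Lambda)$ are direct \emph{products} rather than sums of (possibly infinitely many) simples, so arguments that normally rest on finiteness must be replaced by cardinal-arithmetic considerations, and the extraction of the subset $J$ in (b) must be made compatibly with the product topology. A secondary technical point is verifying that arbitrary products of projectives remain projective in $\CC(\Lambda)$, which ultimately relies on the exactness of inverse limits in this category; once this is in hand, the rest of the argument is essentially a topological Krull--Schmidt in the style of \cite{demgab}.
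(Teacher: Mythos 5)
The paper does not prove this theorem; it is cited wholesale from Demazure--Gabriel V.2.4.5, and your outline is essentially the Gabriel approach (reduce to the semisimple quotient $M/\rad M$, use projective envelopes, and lift isomorphisms by a topological Nakayama argument), so the structure of your argument is the right one.

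However, there is one genuine error in the justification of the step you yourself flag as the crux. You assert that $\prod_{i\in I} P_i$ is projective in $\CC(\Lambda)$ because ``inverse limits are exact in $\CC(\Lambda)$.'' Exactness of products (AB4$^*$) is a statement about the product \emph{functor}, and it does not imply that a product of projectives is projective: for an arbitrary family, $\Hom(\prod_i P_i,-)$ is not $\prod_i\Hom(P_i,-)$, and indeed in the AB4$^*$ category of abelian groups the Baer--Specker group $\prod_{\mathbb N}\mathbb Z$ is not projective even though each factor is. The correct reason that products of projectives are projective here is special to the pseudocompact situation: dualizing via $M\mapsto M^\vee$, the claim becomes that arbitrary direct sums of injectives are injective in $\DD(\Lambda)$, and this holds because $\DD(\Lambda)$ is locally finite (every object is the directed union of its finite-length subobjects) and hence locally noetherian, so the Matlis--Papp criterion applies. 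That is the input you need, and it is what Gabriel actually uses; ``exact inverse limits'' by itself will not carry the argument.

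Two smaller points to tighten. In (b), you need to choose $J\subseteq I$ not merely so that $\ker\bar q$ is abstractly isomorphic to $\prod_{j\in J}S_j$, but so that the \emph{given} projections $(\mathrm{pr}_j)_{j\in J}$ restrict to an isomorphism $\ker\bar q\xrightarrow{\sim}\prod_{j\in J}S_j$; otherwise the reduction of $q_J$ modulo the radical need not be the isomorphism you want. This is achievable (a direct factor of a semisimple product admits a complement that is a subproduct after matching cardinal multiplicities isotype by isotype), but it is an extra argument, not automatic. And in (a) and throughout, the Nakayama step that upgrades ``isomorphism modulo $\rad$'' to ``isomorphism'' genuinely needs projectivity of \emph{both} source and target: surjectivity follows from topological Nakayama applied to the cokernel, but injectivity requires splitting the resulting surjection, which uses projectivity of the target. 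You have this available once products of projectives are known to be projective, but the logical dependence should be made explicit since that is exactly the step whose proof needs repair.
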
      

\begin{prop}\label{indecomp} Let $\PP$ be an open normal pro-$p$ subgroup of $\GG$, then the irreducible $\Lambda$-modules, coincide
with the irreducible $k[\GG/\PP]$-modules. In particular, there are only finitely many irreducibles, and as $A$-modules they are finite dimensional 
$k$-vector spaces. Moreover, there
exists an isomorphism of $\Lambda$-modules:
$$\Lambda\cong \bigoplus_{S} (\dim_k S) P_S,$$
where the direct sum is taken over all irreducible $\Lambda$-modules and $P_S$ is a projective envelope of $S$ in $\CC(\Lambda)$.
\end{prop}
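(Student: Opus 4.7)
The plan is to identify irreducible $\Lambda$-modules by reducing to the finite quotient $\GG/\PP$, then to compute the semisimple quotient $\Lambda/\rad(\Lambda)$ via Artin--Wedderburn, and finally to lift orthogonal primitive idempotents from this quotient to produce the claimed direct sum decomposition.

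For the first assertion I use the anti-equivalence $M\mapsto M^{\vee}$ between $\CC(\Lambda)$ and $\DD(\Lambda)$: irreducible objects correspond to simple objects on each side. For $\Lambda=A[[\GG]]$, a simple $V$ in $\DD(\Lambda)$ is the union of its $\MM^n$-torsion submodules $V[\MM^n]$, each itself a submodule, so simplicity forces $V=V[\MM]$, i.e.\ $\MM V=0$, and $V$ lies in $\Rep_k(\GG)$. The standard pro-$p$ argument (pass to a finite quotient of $\PP$ and take invariants) gives $V^{\PP}\neq 0$; by simplicity $V=V^{\PP}$, so $V$ factors through the finite group $\GG/\PP$. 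Hence $V$ is finite-dimensional over $k$ and corresponds to an irreducible $k[\GG/\PP]$-module, proving the first two claims. It follows in particular that the kernel of $\Lambda\twoheadrightarrow k[\GG/\PP]$ acts trivially on every simple $\Lambda$-module, hence lies in $\rad(\Lambda)$; since $\rad(k[\GG/\PP])$ lifts back, one has $\Lambda/\rad(\Lambda)\cong k[\GG/\PP]/\rad(k[\GG/\PP])$.

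Granted the implicit hypothesis (verified in the applications since $k_L\supseteq \Fq$) that every irreducible is absolutely irreducible, Artin--Wedderburn yields a ring isomorphism $\Lambda/\rad(\Lambda)\cong \prod_{S\in \Irr(\GG)}\End_k(S)$, and hence an isomorphism of left $\Lambda$-modules $\Lambda/\rad(\Lambda)\cong\bigoplus_{S}S^{\dim_k S}$. To transfer this decomposition up to $\Lambda$ itself, I lift the orthogonal primitive idempotents of $\Lambda/\rad(\Lambda)$ to orthogonal primitive idempotents $e_{S,i}\in \Lambda$, indexed by $S\in\Irr(\GG)$ and $1\le i\le \dim_k S$, summing to $1$. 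This relies on the topological nilpotence of $\rad(\Lambda)$: its image in every finite-length quotient $\Lambda/N$ is a nilpotent ideal, which makes the usual Hensel-type series for lifting idempotents converge in the linear topology of $\Lambda$. The resulting decomposition $\Lambda=\bigoplus_{S,i}\Lambda e_{S,i}$ in $\CC(\Lambda)$ has each summand indecomposable projective with unique simple quotient $S$, hence isomorphic to $P_S$ by the uniqueness of projective envelopes recalled in Section~\ref{injectiveprojective}, yielding the claimed formula. The main difficulty is the idempotent-lifting step; a cleaner alternative would be to apply Theorem~\ref{proj}(a) to write $\Lambda\cong\prod_i P_{S_i}$ from the outset and then match multiplicities on both sides via Theorem~\ref{proj}(c) after projecting onto $\Lambda/\rad(\Lambda)$.
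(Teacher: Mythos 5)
Your identification of the irreducibles runs exactly as in the paper: dualize to $\DD(\Lambda)$, observe the dual is killed by $\MM$ and has nonzero $\PP$-invariants, and conclude it factors through $k[\GG/\PP]$. For the decomposition of $\Lambda$ you take a genuinely different route. The paper computes $\Hom_\Lambda(\Lambda,S)\cong S$ to read off the head $\Lambda/\rad(\Lambda)$, and then invokes the Demazure--Gabriel fact recalled just before the proposition that a projective object of $\CC(\Lambda)$ is determined up to isomorphism by its head; that is the whole argument. You instead apply Artin--Wedderburn to $\Lambda/\rad(\Lambda)$ and lift a complete system of orthogonal primitive idempotents to $\Lambda$. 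Both routes work, but yours imports the extra input of idempotent lifting in a pseudo-compact ring (which you would need to justify more carefully than ``a Hensel-type series converges'' --- the lifting to $\Lambda=\varprojlim\Lambda/N$ requires making the stage-by-stage lifts compatible), whereas the paper sidesteps lifting entirely by using the head-determines-projective principle. Your ``cleaner alternative'' at the end, via Theorem~\ref{proj}(a) and (c) after projecting to $\Lambda/\rad(\Lambda)$, is in substance the paper's own argument.

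You are right to flag the absolute-irreducibility hypothesis, and it is worth noting that the paper's proof has the same implicit assumption. In general the multiplicity of $P_S$ in $\Lambda$ is $\dim_{D_S}S$ where $D_S:=\End_\Lambda(S)$: from $\Hom_\Lambda(\Lambda,S)\cong S$ and $\Lambda/\rad(\Lambda)\cong\bigoplus_T m_T T$ one gets only $m_S\dim_k D_S=\dim_k S$, so the stated formula $\Lambda\cong\bigoplus_S(\dim_k S)P_S$ requires $D_S=k$ for every $S$. This holds in every case where the paper applies the proposition (as observed in \S2, $k_L\supseteq\Fq$ forces every irreducible smooth $k_L$-representation in sight to be absolutely irreducible), so it is an unstated running hypothesis rather than a substantive error; making it explicit, as you do, is the more careful option.
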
 
\begin{proof} Let $S$ be an irreducible $\Lambda$-module in $\CC(\Lambda)$. The anti-equivalence of 
categories implies that $S^{\vee}:=\Hom^{cont}_{A}(S, L/A)$ is an irreducible discrete $p$-torsion module of $\GG$.
Hence, the $A$-module structure of $S^{\vee}$ is just a $k$-vector space, and since $S^{\vee}$ is 
discrete and $\PP$ is pro-$p$, the subspace of $\PP$-invariants of $S^{\vee}$ is non-zero. Since $S^{\vee}$ is irreducible
we obtain that $\PP$ acts trivially on $S^{\vee}$. By dualizing back we get the result. Moreover, we have 
$$\Hom_{\Lambda}(\Lambda, S)\cong \Hom_{k[\GG/\PP]}(k[\GG/\PP], S)\cong S.$$
This implies that 
$$\Lambda/\rad(\Lambda)\cong  \bigoplus_{S} (\dim_k S) S,$$
and since a projective module in $\CC(\Lambda)$ is determined by its head, we get that 
$$\Lambda\cong \bigoplus_{S} (\dim_k S) P_S.$$
\end{proof}

From now on we assume that $\GG$ is a $p$-adic Lie group. Then it follows from results of Lazard \cite{laz} that 
 $\Lambda$ is noetherian, see \cite{otmar} Cor. 2.4. Hence, the category of finitely generated modules 
$\mfg{\Lambda}$ is abelian. Moreover, if $M$ is finitely generated over $\Lambda$ then there 
exists a unique Hausdorff topology on $M$ such that $M$ is a topological $\Lambda$-module, and every 
$\Lambda$-homomorphism between finitely generated modules is continuous for the canonical topology, \cite{iw} Prop 3.1 or 
\cite{nsw} \S2 Prop 5.2.22. Hence, we may view $\mfg{\Lambda}$ as a full subcategory of $\CC(\Lambda)$.

\begin{prop}\label{good} Every object in $\mfg{\Lambda}$ has a projective envelope. Moreover, Theorem \ref{proj}
holds for $\mfg{\Lambda}$ if we replace products with finite direct sums. 
\end{prop}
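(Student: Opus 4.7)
The plan is to construct projective envelopes explicitly from the data of Proposition \ref{indecomp}, and to deduce the structural statements by transferring Theorem \ref{proj} along $\mfg{\Lambda}\hookrightarrow \CC(\Lambda)$, using finite generation to force the topological products there to collapse to finite direct sums. Throughout I will use that $\Lambda$ is noetherian and that $\Lambda/\rad(\Lambda)$ is semisimple artinian (Proposition \ref{indecomp}), so that $\rad(\Lambda)$ is the Jacobson radical and Nakayama's lemma is available for finitely generated $\Lambda$-modules. Moreover, the isomorphism $\Lambda\cong \bigoplus_S (\dim_k S)P_S$ of Proposition \ref{indecomp} exhibits each indecomposable projective $P_S$ as a direct summand of the free module $\Lambda$, hence as an object of $\mfg{\Lambda}$.

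To build a projective envelope of $M\in \mfg{\Lambda}$, I first pass to the head $\overline{M}:=M/\rad(\Lambda)M$. Since $M$ is finitely generated and $\Lambda/\rad(\Lambda)$ is semisimple artinian, $\overline{M}$ has finite length; write $\overline{M}\cong \bigoplus_S S^{\oplus n_S}$ with finitely many non-zero $n_S$. Set $P:=\bigoplus_S P_S^{\oplus n_S}$, a finitely generated projective with a canonical isomorphism $P/\rad(\Lambda)P\cong \overline{M}$. Projectivity lifts the identity on heads to a morphism $q:P\to M$. Applying Nakayama to the finitely generated module $\Cok(q)$, which satisfies $\rad(\Lambda)\Cok(q)=\Cok(q)$ because $q$ induces a surjection on heads, shows that $q$ is surjective. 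The kernel of $q$ lies in $\rad(\Lambda)P$ because $q$ induces the identity on heads by construction; since $\rad(\Lambda)P$ is superfluous in $P$ (by Nakayama again, applied to $P/K$ for any submodule $K$ with $\rad(\Lambda)P+K=P$), $q$ is essential, hence a projective envelope.

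For the structural statements, let $Q$ be a projective object of $\mfg{\Lambda}$. Viewed in $\CC(\Lambda)$, Theorem \ref{proj}(a) gives $Q\cong \prod_{i\in I}P_i$ with each $P_i$ indecomposable projective. The head $Q/\rad(\Lambda)Q$ is a finitely generated, hence finite-length, semisimple $\Lambda/\rad(\Lambda)$-module; it surjects onto $\bigoplus_{i\in I}(P_i/\rad(\Lambda)P_i)$, so since each summand is non-zero the index set $I$ must be finite, and the product coincides with the direct sum. This proves (a). Parts (b) and (c) then follow immediately from the corresponding statements in $\CC(\Lambda)$, since all modules involved are finite direct sums of indecomposable projectives, and in particular already belong to $\mfg{\Lambda}$.

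The main obstacle I anticipate is making Nakayama's lemma and the superfluity of $\rad(\Lambda)P$ function cleanly in this non-commutative, semi-local setting, and verifying that the topological subtleties of the products in $\CC(\Lambda)$ do not interfere when one restricts to $\mfg{\Lambda}$. Both are handled by Proposition \ref{indecomp}: identifying $\Lambda/\rad(\Lambda)$ as a finite semisimple $k$-algebra makes $\rad(\Lambda)$ the Jacobson radical and forces heads of finitely generated modules to have finite length, which rules out infinite products of indecomposables from the outset.
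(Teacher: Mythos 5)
Your proof is correct in substance, but it takes a genuinely different route from the paper's. The paper's argument is shorter: it takes a projective envelope $\beta:P\twoheadrightarrow M$ in $\CC(\Lambda)$ (which exists by general nonsense), picks any surjection $\alpha:\Lambda^n\twoheadrightarrow M$ from a finitely generated free module, lifts it to $\gamma:\Lambda^n\to P$ with $\alpha=\beta\gamma$, and then observes that essentiality of $\beta$ forces $\gamma$ to be surjective, so $P$ is a direct summand of $\Lambda^n$ and hence finitely generated. Applying the same reasoning with $P=M$ and invoking Proposition \ref{indecomp} plus Theorem \ref{proj}(c) shows every finitely generated projective is a finite direct sum of the $P_S$'s. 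Your proof instead builds the projective envelope explicitly as $\bigoplus_S P_S^{\oplus n_S}$ from the head decomposition and verifies essentiality via Nakayama. What the paper's route buys you is that it avoids any explicit appeal to $\rad(\Lambda)$ being the Jacobson radical, to $\rad(P)=\rad(\Lambda)P$ for finitely generated $P$, and to Nakayama for $\Lambda$-modules in this pseudocompact setting --- these identifications are true here (and you note the obstacle honestly), but they do carry some content that the paper's surjection-and-section argument simply sidesteps. What your explicit construction buys is a more concrete description of the envelope, which is occasionally useful.

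Two small points on your argument for part (a). First, to invoke Theorem \ref{proj}(a), you must know $Q$ is projective in $\CC(\Lambda)$, not just in $\mfg{\Lambda}$; the standard way to see this is that $Q$ is a direct summand of $\Lambda^n$ --- but at that point Theorem \ref{proj}(c) applied to the finite decomposition of $\Lambda^n$ already tells you $Q$ is a finite direct sum of indecomposables, making the head-counting argument a detour. Second, the statement that $Q/\rad(\Lambda)Q$ ``surjects onto $\bigoplus_i(P_i/\rad(\Lambda)P_i)$'' is a little loose: the head of a topological product is the topological product of heads, $\prod_i(P_i/\rad P_i)$, which contains each finite subsum as a direct factor; the finiteness of $I$ follows because otherwise this head would have infinite length. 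Neither of these is a gap that invalidates the proof, but the paper's version is cleaner precisely because it never has to negotiate the topology of infinite products.
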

\begin{proof} A projective indecomposable object in $\CC(\Lambda)$ is a direct summand of $\Lambda$, 
hence lies in $\mfg{\Lambda}$. Let $M$ be a finitely generated $\Lambda$-module.  Then there exists a surjection  
$\alpha: \Lambda^n \twoheadrightarrow M$, for some integer $n$. Let $\beta:P\twoheadrightarrow M$ be  
a projective envelope of $M$ in $\CC(\Lambda)$. Since $\Lambda^n$ is projective there exists 
$\gamma: \Lambda^n\rightarrow P$ such that $\alpha=\beta\circ \gamma$. Since $\beta$ is essential, $\gamma$ is surjective,
and since $P$ is projective, $\gamma$ has a section. So $P$ is isomorphic to a direct summand of $\Lambda^n$. 
Proposition \ref{indecomp} and Theorem \ref{proj} implies that $P$ is isomorphic 
to a finite direct sum of projective indecomposable modules. Hence $P$ is finitely generated. The same argument with 
$P=M$ gives that every finitely generated projective module in $\CC(\Lambda)$ is isomorphic to a finite direct sum 
of indecomposable projective modules. The last assertion follows from Theorem \ref{proj}. 
\end{proof} 

\begin{lem}\label{redproj} Let $S$ be an irreducible $A[[\GG]]$-module and let $P_S$ be a projective envelope of $S$ in $\mfg{A[[\GG]]}$ then 
$P_S\otimes_A k$ is a projective envelope of $S$ in $\mfg{k[[\GG]]}$.
\end{lem}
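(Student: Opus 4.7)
The plan is to show that $P_S\otimes_A k$ is both projective in $\mfg{k[[\GG]]}$ and has simple head isomorphic to $S$; then it is automatically a projective envelope, since a projective object in $\CC(\Lambda)$ is indecomposable if and only if it is a projective envelope of an irreducible module, and by Proposition \ref{good} the analogous statement holds for the $\mfg{k[[\GG]]}$-setting.

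First I would observe that by Proposition \ref{indecomp}, $S$ is killed by $\MM$, so $S$ is naturally an irreducible $k[[\GG]]$-module and $S\otimes_A k = S$. Next, projectivity of $P_S\otimes_A k$ is cheap: by Proposition \ref{good} there exists an integer $n$ and a module $Q$ with $P_S\oplus Q\cong A[[\GG]]^n$; tensoring with $k$ yields $(P_S\otimes_A k)\oplus (Q\otimes_A k)\cong k[[\GG]]^n$, so $P_S\otimes_A k$ is a finitely generated projective $k[[\GG]]$-module.

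The main step, and the only one requiring care, is to identify the head. Let $J$ denote the Jacobson radical of $A[[\GG]]$. Since every irreducible $A[[\GG]]$-module is killed by $\MM$ (Proposition \ref{indecomp}), we have $\MM\cdot A[[\GG]]\subseteq J$, and the reduction map identifies $J/\MM\cdot A[[\GG]]$ with the Jacobson radical of $k[[\GG]]$. For the finitely generated module $P_S$ one has $\rad(P_S)=JP_S$, and similarly $\rad(P_S\otimes_A k)=(J/\MM\cdot A[[\GG]])(P_S/\MM P_S) = JP_S/\MM P_S$. Therefore
\begin{equation*}
(P_S\otimes_A k)/\rad(P_S\otimes_A k)\cong (P_S/\MM P_S)/(JP_S/\MM P_S)\cong P_S/JP_S=P_S/\rad(P_S)\cong S,
\end{equation*}
the last isomorphism because $P_S$ is a projective envelope of $S$ in $\mfg{A[[\GG]]}$, hence indecomposable with simple head $S$.

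Having shown that $P_S\otimes_A k$ is projective in $\mfg{k[[\GG]]}$ with simple head $S$, it is indecomposable (any decomposition would split the head) and therefore a projective envelope of $S$ in $\mfg{k[[\GG]]}$, which is what we wanted. The only subtle point is the identification $\rad(P_S\otimes_A k)=JP_S/\MM P_S$; I expect this to follow immediately once one notes that $A[[\GG]]$ is a compact semilocal ring whose residue algebra modulo $J$ agrees with $k[[\GG]]/\rad(k[[\GG]])$, so that reduction modulo $\MM$ preserves the radical for finitely generated modules. If that identification needed justification it could be argued directly from the fact, already used in Proposition \ref{indecomp}, that irreducibles of $A[[\GG]]$ and $k[[\GG]]$ coincide.
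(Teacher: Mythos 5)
Your proposal is correct and is essentially the same argument as the paper's: the paper observes that the essential epimorphism $P_S\twoheadrightarrow S$ factors through $P_S\otimes_A k$ (since $\MM$ kills $S$), that the factored map remains essential, and that projectivity descends because $P_S$ is a direct summand of $A[[\GG]]$. Your explicit radical computation is exactly the content of the paper's implicit ``this implies the factored map is essential.'' One remark to remove your hedge at the end: you do not need to pass through the Jacobson radical of the ring at all. Since the head $P_S/\rad(P_S)\cong S$ is killed by $\MM$, you have $\MM P_S\subseteq\rad(P_S)$, and then the general fact $\rad(M/N)=\rad(M)/N$ for $N\subseteq\rad(M)$ immediately gives $\rad(P_S\otimes_A k)=\rad(P_S)/\MM P_S$, hence the head of $P_S\otimes_A k$ equals $P_S/\rad(P_S)\cong S$; this is cleaner and makes no appeal to the identity $\rad(M)=J(\Lambda)M$ or to the semilocal structure of $A[[\GG]]$.
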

\begin{proof} Since as $A$-module $S$ is a $k$-vector space the map $P_S\rightarrow S$ factors through $P_S\otimes_A k$. This implies that 
$P_S\otimes_A k\rightarrow S$ is an essential epimorphism. Since $P_S$ is isomorphic to a direct summand of $A[[\GG]]$, $P_S \otimes_A k$ is 
isomorphic to a direct summand of $k[[\GG]]$, hence it is projective. 
\end{proof}

\begin{prop}\label{2surj} Let $P$ and $M$ be finitely generated $A[[\GG]]$-module. Suppose that $P$ is projective and we have two
surjective homomorphisms of $A[[\GG]]$-modules $\psi_1: P\rightarrow M$, $\psi_2: P\rightarrow M$. Then there exists $\phi\in \Aut_{A[[\GG]]}(P)$ such that 
$\psi_2\circ \phi=\psi_1$.
\end{prop}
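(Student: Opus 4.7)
The plan is to reduce both $\psi_1$ and $\psi_2$ to a common projective cover of $M$ and then appeal to Krull--Schmidt uniqueness in $\mfg{A[[\GG]]}$ to glue them.

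First, by Proposition \ref{good} there is a projective envelope $\pi : P_M \twoheadrightarrow M$, i.e.\ an essential epimorphism with $P_M$ finitely generated and projective. Since $P$ is projective and $\pi$ is surjective, $\psi_1$ and $\psi_2$ lift through $\pi$ to morphisms $r_1, r_2 : P \to P_M$ with $\pi \circ r_i = \psi_i$. Because $\pi$ is essential and $\pi \circ r_i = \psi_i$ is surjective, the definition of an essential epimorphism forces $r_i$ itself to be surjective. Choosing sections $s_i : P_M \hookrightarrow P$ of $r_i$ (available since $P_M$ is projective) yields two direct sum decompositions $P = s_i(P_M) \oplus K_i$, where $K_i := \ker r_i$.

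Next, I show $K_1 \cong K_2$. Each $K_i$ is a direct summand of the finitely generated projective module $P$ and hence is itself finitely generated projective. Decompose $P_M$, $K_1$, and $K_2$ into indecomposable projective summands; by Proposition \ref{good} and Theorem \ref{proj}(c), the two resulting decompositions of $P$ as a finite direct sum of indecomposable projectives have the same multiset of isomorphism classes. Deleting the summands coming from the common factor $P_M$ on both sides produces an isomorphism $\mu : K_1 \overset{\cong}{\rightarrow} K_2$.

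Finally, I construct $\phi$. Using the decomposition $P = s_1(P_M) \oplus K_1$ on the source and $P = s_2(P_M) \oplus K_2$ on the target, define
\[
\phi(s_1(x) + y) := s_2(x) + \mu(y), \qquad x \in P_M,\ y \in K_1.
\]
As a direct sum of two isomorphisms, $\phi \in \Aut_{A[[\GG]]}(P)$. A direct computation using $r_i \circ s_i = \id_{P_M}$ and $\mu(K_1) \subseteq K_2 = \ker r_2$ gives $r_2 \circ \phi(s_1(x) + y) = x = r_1(s_1(x) + y)$, so $r_2 \circ \phi = r_1$ and consequently $\psi_2 \circ \phi = \pi \circ r_2 \circ \phi = \pi \circ r_1 = \psi_1$, as required.

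The main obstacle is the second step, producing $\mu$: it relies on Krull--Schmidt uniqueness for finitely generated projectives in $\mfg{A[[\GG]]}$, which is precisely the content of Proposition \ref{good}. Once this common projective cover $P_M$ is in place, the remaining construction is formal.
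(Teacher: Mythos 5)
Your proof is correct and is essentially the paper's argument, just phrased via explicit lifts $r_i$ through a chosen common projective envelope $\pi\colon P_M\twoheadrightarrow M$ and their kernels $K_i$, rather than the paper's idempotent decomposition $P = e_iP \oplus (1-e_i)P$ with $e_iP\cong P_M$ and $(1-e_i)P\subseteq\ker\psi_i$ (which is the same decomposition in different notation). Both routes reach the conclusion by the same two inputs: uniqueness of projective envelopes, and cancellation of the complementary summand via Krull--Schmidt, i.e.\ Theorem \ref{proj}(c) as made applicable by Proposition \ref{good}.
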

\begin{proof} Let $P_M$ be a projective envelope of $M$ in $\mfg{A[[\GG]]}$. Since $\psi_i$ are surjective, for $i\in\{0,1\}$ there exists idempotents
$e_i\in \End_{A[[\GG]]}(P)$ such that $(1-e_i)P$ lies in the kernel of $\psi_i$, $e_i P\cong P_M$ and $\psi_i: e_i P\rightarrow M$ is a projective envelope of $M$. 
Since projective envelopes are unique up to isomorphism, there exists an isomorphism of $\phi_1: e_1P\cong e_2 P$, such that $\psi_2\circ \phi_1=\psi_1$.
It follow from Theorem \ref{proj} (c) that there exists an isomorphism of $A[[\GG]]$-modules $\phi_2: (1-e_1)P\cong (1-e_2)P$. Let 
$\phi:=(\phi_1, \phi_2)$ be a homomorphism $P=e_1P\oplus (1-e_1)P\rightarrow e_2P\oplus (1-e_2)P=P$. Then $\phi$ is an isomorphism and $\psi_2\circ \phi=\psi_1$. 
\end{proof}

\begin{prop}\label{Misproj} Let $M$ be a finitely generated $A[[\GG]]$-module, which is $A$-torsion free. 
Assume that $M\otimes_A k$ is a projective object
in $\CC(k[[\GG]])$. Then 
$$M\otimes_A k\cong \bigoplus_{S} m_S (P_S\otimes_A k), \quad M\cong \bigoplus_{S} m_S P_S,$$
 where the sum is taken 
over irreducible modules $S$, $m_S$ denotes some finite multiplicities, and $P_S$ denotes a projective envelope of $S$ in $\CC(A[[\GG]])$.
\end{prop}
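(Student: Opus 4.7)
My plan is to first establish the mod-$\MM$ decomposition, then lift it to $A[[\GG]]$ via a Nakayama-style argument, using the $A$-torsion freeness of $M$ to control the kernel.

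\textbf{Step 1 (the decomposition of $M\otimes_A k$).} Since $M\otimes_A k$ is a finitely generated projective object in $\CC(k[[\GG]])$ (finite generation is preserved by the surjection $M\twoheadrightarrow M\otimes_A k$), Proposition \ref{good} applied to $\Lambda=k[[\GG]]$ gives that it is a finite direct sum of projective indecomposable modules in $\mfg{k[[\GG]]}$. By Proposition \ref{indecomp} the indecomposable projectives in $\CC(k[[\GG]])$ are the projective envelopes of the irreducible $k[[\GG]]$-modules, and by Lemma \ref{redproj} these are exactly the modules $P_S\otimes_A k$, where $P_S$ is the projective envelope of $S$ in $\mfg{A[[\GG]]}$. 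Grouping summands by isomorphism type yields $M\otimes_A k\cong\bigoplus_S m_S(P_S\otimes_A k)$ for certain finite multiplicities $m_S$.

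\textbf{Step 2 (lifting the decomposition).} Set $N:=\bigoplus_S m_S P_S$, a finitely generated projective $A[[\GG]]$-module with $N\otimes_A k\cong M\otimes_A k$. Since $N$ is projective I can lift the composition $N\twoheadrightarrow N\otimes_A k\xrightarrow{\sim} M\otimes_A k$ along the reduction $M\twoheadrightarrow M\otimes_A k$ to obtain a homomorphism $\phi:N\to M$ whose reduction modulo $\MM$ is an isomorphism.

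\textbf{Step 3 (surjectivity of $\phi$).} I will apply topological Nakayama's lemma inside $\CC(A[[\GG]])$. By Proposition \ref{indecomp}, every irreducible $A[[\GG]]$-module is killed by $\MM$, so $\MM\subseteq\rad(A[[\GG]])$. The cokernel $C:=\Cok(\phi)$ is a finitely generated $A[[\GG]]$-module with $C/\MM C=0$ by construction, hence $C/\rad(A[[\GG]])C=0$; Nakayama for compact finitely generated modules (applicable since $C\in\CC(A[[\GG]])$) then forces $C=0$.

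\textbf{Step 4 (injectivity of $\phi$).} Let $K:=\Ker\phi$, a finitely generated $A[[\GG]]$-module, and consider the short exact sequence $0\to K\to N\to M\to 0$. Since $M$ is $A$-torsion free and $A$ is a discrete valuation ring, $M$ is $A$-flat, so tensoring with $k$ over $A$ is exact on this sequence. Thus $K/\MM K\hookrightarrow N/\MM N\twoheadrightarrow M/\MM M$ remains exact with the right map an isomorphism, forcing $K/\MM K=0$. The same Nakayama argument as in Step 3 gives $K=0$, so $\phi:N\xrightarrow{\sim}M$.

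The substantive point is Step 4: one must ensure no Tor-obstruction appears when reducing mod $\MM$, which is exactly where the hypothesis that $M$ is $A$-torsion free is used. The rest is formal bookkeeping combined with topological Nakayama, both of which are available in $\CC(A[[\GG]])$ thanks to the finiteness and structural results of \S\ref{modules}.
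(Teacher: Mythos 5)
Your proof is correct and follows essentially the same route as the paper: decompose $M\otimes_A k$ via Propositions \ref{good}, \ref{indecomp} and Lemma \ref{redproj}, lift a map from the projective model using projectivity, then apply topological Nakayama twice — once directly for surjectivity and once after using $A$-flatness of $M$ (equivalently $A$-torsion freeness) to kill the Tor-obstruction for injectivity. The only cosmetic difference is that you phrase Nakayama via $\MM\subseteq\rad(A[[\GG]])$, whereas the paper invokes the version for compact $A$-modules from SGA3; both are valid.
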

\begin{proof} Since $M$ is finitely generated over $A[[\GG]]$, $M\otimes_A k$ is finitely generated over  $k[[\GG]]$. Proposition \ref{good}
implies that there exists uniquely determined non-negative integers $m_S$, such that  $M\otimes_A k\cong \oplus_{S} m_S \overline{P}_S$, 
where $\overline{P}_S$ is a projective envelope of $S$ in $\CC(k[[\GG]])$. Lemma \ref{redproj} implies that there exists an isomorphism
$P_S \otimes_A k\cong \overline{P}_S$. Set $P:=\oplus_{S} m_S P_S$. Since $P$ is projective there exists $\psi: P\rightarrow M$ making the 
following diagram commute:
\begin{equation}\label{diag}
 \xymatrix{ P\ar[r]^{\psi}\ar[d]& M\ar[d] \\P\otimes_A k \ar[r]^{\cong}& M\otimes_A k}
\end{equation}
Let $Q$ be the cokernel of $\psi$. Then $Q\otimes_A k=0$, and since $M$ is a compact $A$-module, $Q$ is a compact $A$-module.
 Nakayama's lemma \cite{SGA3} Exp. $VII_B$ (0.3.3) implies that $Q=0$. Hence $\psi$ is surjective. Since $M$ is $A$-torsion free, it
is a flat $A$-module. This implies that $(\Ker \psi)\otimes_A k =0$, which again by Nakayama's lemma gives $\Ker\psi =0$.   
\end{proof}
 
\begin{cor}\label{liftisomorphism} Let $P$ be a finitely generated projective $A[[\GG]]$-module then the reduction map 
$\Aut_{A[[\GG]]}(P)\rightarrow \Aut_{k[[\GG]]}(P\otimes_A k)$ is surjective.
\end{cor}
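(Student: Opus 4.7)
The plan is to lift a given $\bar{\phi} \in \Aut_{k[[\GG]]}(P \otimes_A k)$ in two stages: first produce some endomorphism $\phi$ of $P$ reducing to $\bar\phi$, then upgrade it to an automorphism using Proposition \ref{2surj}.

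First I would use projectivity. Let $\pi: P \twoheadrightarrow P \otimes_A k$ denote the reduction map. Then $\bar\phi \circ \pi: P \to P \otimes_A k$ is a surjective $A[[\GG]]$-linear map, and since $P$ is projective and $\pi$ is surjective, we can lift it to a morphism $\phi: P \to P$ with $\pi \circ \phi = \bar\phi \circ \pi$. By construction $\phi \otimes_A k = \bar\phi$.

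Second, I would check that $\phi$ is surjective by a Nakayama argument identical to the one used at the end of the proof of Proposition \ref{Misproj}. The cokernel $Q := \Cok(\phi)$ is a finitely generated $A[[\GG]]$-module, hence compact as an $A$-module. Because $\bar\phi$ is an isomorphism, $Q \otimes_A k = 0$, so Nakayama's lemma (\cite{SGA3} Exp. $VII_B$ (0.3.3)) forces $Q = 0$.

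Finally, I would invoke Proposition \ref{2surj} with $M := P$, $\psi_1 := \phi$ and $\psi_2 := \id_P$, both of which are surjective $A[[\GG]]$-linear maps $P \to P$. The proposition yields some $\phi' \in \Aut_{A[[\GG]]}(P)$ with $\id_P \circ \phi' = \phi$, so $\phi = \phi'$ is itself an automorphism reducing to $\bar\phi$. There is no real obstacle here; the only care needed is to confirm the Nakayama step goes through in the compact, non-commutative setting, which is exactly what was already verified in Proposition \ref{Misproj}.
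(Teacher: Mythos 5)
Your proof is correct, and it starts the same way the paper does: lift $\bar\phi$ through the reduction map using projectivity of $P$, then conclude surjectivity of the lift via Nakayama applied to the cokernel. The only divergence is in the final step, where you establish injectivity. The paper's proof is the one-liner ``set $M=P$ in the diagram \eqref{diag},'' which points back to the argument of Proposition~\ref{Misproj}; there, after Nakayama gives surjectivity, one tensors the short exact sequence $0\to\Ker\psi\to P\to P\to 0$ with $k$ and uses that $P$ is $A$-flat (being a direct summand of $A[[\GG]]^n$, it is $A$-torsion-free) to conclude $\Ker\psi\otimes_A k=0$, whence $\Ker\psi=0$ by Nakayama again. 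You instead feed $\psi_1:=\phi$ and $\psi_2:=\id_P$ into Proposition~\ref{2surj}, which rests on the Krull--Schmidt-type uniqueness of Theorem~\ref{proj} rather than on flatness. Both are valid; the flatness-plus-Nakayama route is more elementary and self-contained, while your use of Proposition~\ref{2surj} amounts to invoking the general fact that a surjective endomorphism of a finitely generated module over a Noetherian ring is an isomorphism, packaged via the decomposition theory of projectives.
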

\begin{proof} Set $M=P$ in the diagram \eqref{diag}.
\end{proof}

\begin{prop}\label{multhom} Let $S$ be an irreducible 
$A[[\GG]]$-module and let $P_S$ be a projective envelope of $S$, let $M$ be an $A[[\GG]]$-module, 
such that $M$ as an $A$-module is free of finite rank. Then $\Hom_{A[[\GG]]}(P_S, M)$ is a free $A$-module of rank $m$, where $m$ is 
the multiplicity with which $S$ occurs as a subquotient of $\overline{M}$.
\end{prop}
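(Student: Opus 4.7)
The plan is to show $H := \Hom_{A[[\GG]]}(P_S, M)$ is free over $A$ of the right rank by first establishing freeness abstractly, then identifying the rank with a dimension count modulo $\varpi_L$.

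\textbf{Step 1 (Freeness).} By Proposition \ref{indecomp}, $A[[\GG]] \cong \bigoplus_{S'} (\dim_k S') P_{S'}$, so $P_S$ is a direct summand of $A[[\GG]]$. Choose a retraction $A[[\GG]] = P_S \oplus Q$; then applying $\Hom_{A[[\GG]]}(-,M)$ gives
$$M \cong \Hom_{A[[\GG]]}(A[[\GG]], M) \cong H \oplus \Hom_{A[[\GG]]}(Q,M)$$
as $A$-modules. Since $M$ is free of finite rank over $A$ and $A$ is a discrete valuation ring, every direct summand is free of finite rank, so $H$ is free of finite rank over $A$.

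\textbf{Step 2 (Reduction mod $\varpi_L$).} Since $M$ is $A$-torsion free, the sequence
$$0 \to M \xrightarrow{\varpi_L} M \to \overline{M} \to 0$$
is exact in $\mfg{A[[\GG]]}$. Projectivity of $P_S$ makes $\Hom_{A[[\GG]]}(P_S,-)$ exact, yielding
$$0 \to H \xrightarrow{\varpi_L} H \to \Hom_{A[[\GG]]}(P_S, \overline{M}) \to 0.$$
Hence $\rank_A H = \dim_k (H/\varpi_L H) = \dim_k \Hom_{A[[\GG]]}(P_S, \overline{M})$. Since $\overline{M}$ is a $k[[\GG]]$-module, the $A[[\GG]]$-$\Hom$ coincides with $\Hom_{k[[\GG]]}(P_S \otimes_A k, \overline{M})$, and by Lemma \ref{redproj} the module $P_S \otimes_A k$ is a projective envelope of $S$ in $\mfg{k[[\GG]]}$.

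\textbf{Step 3 (Composition series).} Because $M$ is $A$-free of finite rank, $\overline{M}$ is finite-dimensional over $k$ and admits a composition series with simple factors $T_1,\dots,T_n$. Exactness of $\Hom_{k[[\GG]]}(P_S \otimes_A k,-)$ gives
$$\dim_k \Hom_{k[[\GG]]}(P_S \otimes_A k, \overline{M}) = \sum_{i=1}^n \dim_k \Hom_{k[[\GG]]}(P_S \otimes_A k, T_i).$$
Any map from $P_S \otimes_A k$ to a simple module $T$ factors through its head $S$, so $\Hom_{k[[\GG]]}(P_S \otimes_A k, T) \cong \Hom_{k[[\GG]]}(S, T)$, which by Schur's lemma is $k$ if $T \cong S$ and $0$ otherwise. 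Summing gives exactly the multiplicity $m$, and combined with Step 2 we conclude $\rank_A H = m$.

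The main subtle point is Step 3: the identification $\Hom_{k[[\GG]]}(S,S) = k$ requires $S$ to be absolutely irreducible, i.e., that Schur's lemma produces the base field itself. This is guaranteed by the paper's standing convention that $k_L$ is taken large enough (as recorded in the Notation section for the groups of interest). If this fails, $H$ is only free of rank $m \cdot [\End_{k[[\GG]]}(S):k]$, and one interprets $m$ accordingly.
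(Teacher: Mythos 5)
Your proof is correct and follows essentially the same route as the paper's: freeness of $\Hom_{A[[\GG]]}(P_S,M)$ via the direct-summand-of-$M$ argument, reduction modulo $\varpi_L$ to identify the rank with $\dim_k\Hom(P_S,\overline{M})$, and a composition-series induction to count multiplicities. Your closing remark about needing $\End_{k[[\GG]]}(S)=k$ is a genuine subtlety the paper leaves implicit (its step $\dim_k\Hom(P_S,\overline{M})=1$ when $\overline{M}\cong S$ uses it); in the paper's applications $\GG$ is always a subquotient of $\GL_2(\oF)$ or $I$ for which the convention on $k_L$ in the Notation section does make every irreducible absolutely irreducible, so the statement holds as written there.
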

\begin{proof} We claim that $\Hom_{A[[\GG]]}(P_S, \overline{M})$ is a 
$k$-vector space of dimension $m$. Since $M$ is a free $A$-module of finite rank, 
$\overline{M}$ is a finite dimensional $k$-vector space. In particular $\overline{M}$ is an $A[[\GG]]$-module of finite length. Suppose that 
$\overline{M}$ is irreducible, then since $P_S$ is a projective envelope of $S$, we have $\dim_k \Hom_{A[[\GG]]}(P_S, \overline{M})=1$ if 
$S\cong \overline{M}$, and $\dim_k \Hom_{A[[\GG]]}(P_S, \overline{M})=0$ if $S\not\cong \overline{M}$. In general, let $S'$ be an irreducible 
submodule of $\overline{M}$. Since $P_S$ is projective we get an exact sequence:
$$0\rightarrow \Hom_{A[[\GG]]}(P_S, S')\rightarrow \Hom_{A[[\GG]]}(P_S, \overline{M})\rightarrow  \Hom_{A[[\GG]]}(P_S, \overline{M}/S')\rightarrow 0.$$
We get the assertion by induction on the length of $\overline{M}$. Now $\Hom_{A[[\GG]]}(P_S, M)$ is a direct summand of 
$\Hom_{A[[\GG]]}(A[[\GG]], M)\cong M$. Hence,  $\Hom_{A[[\GG]]}(P_S, M)$ is a free $A$-module of finite rank. We have 
$$\Hom_{A[[\GG]]}(P_S, M)\otimes_A k \cong \Hom_{A[[\GG]]}(P_S, M\otimes_A k)\cong k^m.$$
Hence, $\Hom_{A[[\GG]]}(P_S, M)$ is a free $A$-module of rank $m$.
\end{proof}   
 
We set $L[[\GG]]:=A[[\GG]]\otimes_A L$, as $A[[\GG]]$ is noetherian, so is $L[[\GG]]$. Hence the category $\mfg{L[[\GG]]}$ of finitely generated 
$L[[\GG]]$-modules is abelian.

\begin{cor}\label{dim} Let $S$ be an irreducible $A[[\GG]]$-module, and let $P_S$ be a projective envelope of $S$ in $\mfg{A[[\GG]]}$. Let $V$ be an 
$L[[\GG]]$-module, such that $V$ is a finite dimensional $L$-vector space. Let $M$ be any $\GG$-invariant $A$-lattice in $V$ then 
$\dim_L\Hom_{L[[\GG]]}(P_S\otimes_A L, V)=m$, where $m$ is the multiplicity with which $S$ occurs in $\overline{M}$.
\end{cor}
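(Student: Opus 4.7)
The plan is to deduce this from Proposition \ref{multhom} by inverting $\varpi_L$, using that $P_S$ is finitely generated projective, so $\Hom(P_S,-)$ commutes with the localization $A \rightsquigarrow L$.

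First I would observe that since $M$ is a $\GG$-invariant $A$-lattice in $V$, the canonical map $M\otimes_A L \to V$ is an isomorphism of $L[[\GG]]$-modules. Applying Proposition \ref{multhom} gives that $\Hom_{A[[\GG]]}(P_S, M)$ is a free $A$-module of rank $m$, where $m$ is the multiplicity with which $S$ occurs as a subquotient of $\overline{M}$. Therefore $\Hom_{A[[\GG]]}(P_S, M)\otimes_A L$ is an $L$-vector space of dimension $m$. It thus remains to produce a natural isomorphism
\[
\Hom_{A[[\GG]]}(P_S, M)\otimes_A L \cong \Hom_{L[[\GG]]}(P_S\otimes_A L,\, M\otimes_A L).
\]

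For this I would use that, by Proposition \ref{good}, $P_S$ is isomorphic to a direct summand of $A[[\GG]]^n$ for some $n$. The desired isomorphism is trivially true when $P_S$ is replaced by $A[[\GG]]$, since both sides then reduce to $M\otimes_A L = V$; it is therefore also true for the free module $A[[\GG]]^n$, and both sides are additive functors in the first variable, so the isomorphism descends to the direct summand $P_S$. (Alternatively one writes $A[[\GG]]^n \cong P_S \oplus P_S'$ and checks compatibility on each summand.) Note that continuity is not an issue here, because every $A[[\GG]]$-module homomorphism between finitely generated modules is automatically continuous for the canonical topology, as recalled before Proposition \ref{good}.

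Combining the two steps yields $\dim_L \Hom_{L[[\GG]]}(P_S\otimes_A L, V) = m$. There is no real obstacle; the only point worth noting is that as a byproduct, the multiplicity $m$ is independent of the choice of lattice $M$, since the left-hand side manifestly is.
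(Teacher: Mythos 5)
Your proof is correct and takes essentially the same route as the paper: both reduce to Proposition~\ref{multhom} via the base-change isomorphism $\Hom_{A[[\GG]]}(P_S, M)\otimes_A L \cong \Hom_{L[[\GG]]}(P_S\otimes_A L, V)$. The only difference is that the paper cites the discussion in Schneider--Teitelbaum \cite{iw} preceding their Proposition~3.1 for this isomorphism, whereas you supply a short self-contained argument (projective implies direct summand of a free module, plus additivity); your version also explicitly records the pleasant byproduct that $m$ is independent of the choice of lattice.
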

\begin{proof} It follows from the discussion in \cite{iw} before Proposition 3.1, that 
  $$\Hom_{L[[\GG]]}(P_S\otimes_A L, V)\cong \Hom_{A[[\GG]]}(P_S, M)\otimes_A L.$$
The assertion follows from \ref{multhom}.
\end{proof}

\section{Banach space representations}\label{banach}
 Let $\GG$ be a compact $p$-adic Lie group. We recall some facts about Banach space representations of $\GG$. 
We follow closely Schneider-Teitelbaum \cite{iw}. Let $\Ban_L$ denote the category of $L$-Banach spaces. We note that we do not  
fix a norm defining the topology on the Banach space $E$, when we do want to fix such a norm 
$\|\centerdot\|$ we will write $(E, \|\centerdot\|)$. 

\begin{defi} An $L$-Banach space representation $E$ of $\GG$ is an $L$-Banach space $E$ together with a $\GG$-action 
by continuous linear automorphisms such that the map $\GG\times E\rightarrow E$ describing the action is continuous.
\end{defi}

Let $\Ban_L(\GG)$ be the category of $L$-Banach space representations with morphisms being all $\GG$-equivariant continuous 
linear maps. 

\begin{defi}\label{admissiblebanach} An $L$-Banach space representation $E$ of $\GG$ is called admissible if there exists a $\GG$-invariant 
bounded open $A$-submodule $M\subseteq E$ such that for any open pro-$p$ group $\PP$ of $\GG$, the $A$-submodule 
of $(E/M)^{\PP}$ is of cofinite type (i.e. $\Hom_A((E/M)^{\PP}, L/A)$ is a finitely generated $A$-module).
\end{defi} 
Let $\Ban_{L}^{adm}(\GG)$ be the full subcategory of $\Ban_{L}(\GG)$ consisting of admissible $L$-Banach representations of
$\GG$. It follows from \cite{iw} Theorem 3.5, that $\Ban_{L}^{adm}(\GG)$ is an abelian category. 

Recall that a topological $A$-module $M$ is linearly topological if $0$ has a fundamental system of open neighbourhoods consisting 
of $A$-submodules. Let $\Mod_{top}(A)$ be the category of all Hausdorff linearly topological $A$-modules, with morphisms all continuous 
$A$-linear maps. Let $\mfl(A)$ be the full subcategory of $\Mod_{top}(A)$ consisting of all torsion-free and compact linearly topological 
$A$-modules. The superscript $fl$ stands for flat. Following \cite{iw} we recall that an $A$-module is torsion-free if and only if it is flat,
\cite[I \S2.4 Prop 3(ii)]{bourbaki}; a compact linear-topological $A$-module $M$ is flat if and only if $M\cong \prod_{i\in I} A$ for some set $I$, 
\cite[$VII_B$ (0.3.8)]{SGA3}. Given $M$ in $\mfl(A)$ we define 
$$M^d:=\Hom^{cont}_{A}(M, L).$$
Now $M^d$ carries a structure of an $L$-Banach space with $\|\ell\|:=\max_{v\in M}|\ell(v)|$. Let $\mfgfl{A[[\GG]]}$ be the category of 
finitely generated $A[[\GG]]$-modules, which are $A$-torsion free.
Given $M$ in $\mfgfl{A[[\GG]]}$ we equip it with the canonical topology, then $M$ is an object in $\mfl(A)$. Given an additive category $\Aa$ we 
denote by $\Aa_{\QQ}$ the additive category with the same objects as $\Aa$ and $\Hom_{\Aa_{\QQ}}(A,B):=\Hom_\Aa(A,B)\otimes \QQ$.

\begin{thm}[\cite{iw}, Thm 1.2, Thm. 3.5]\label{duality} The functor $M\mapsto M^d$ induces an anti-equivalence of categories 
$$\mfl(A)_{\QQ}\overset{\sim}{\rightarrow} \Ban_L, \quad
\mfgfl{A[[\GG]]}_{\QQ}\overset{\sim}{\rightarrow} \Ban^{adm}_{\GG}(L).$$
\end{thm}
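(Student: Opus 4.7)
The plan is to prove the two anti-equivalences in parallel, treating the second as the first enriched with $\GG$-equivariant structure together with a matching of finiteness conditions. The forward functor $M \mapsto M^d$ is well-defined: since $M \in \mfl(A)$ is compact and $L$ is locally compact, any $\ell \in \Hom^{cont}_A(M, L)$ has bounded image, so $\|\ell\| := \sup_{v \in M} |\ell(v)|$ is finite, and completeness of $M^d$ under this norm follows from completeness of $L$. A continuous $A[[\GG]]$-module structure on $M$ transports to a jointly continuous $\GG$-action on $M^d$ by an equicontinuity argument on the unit ball $\Hom^{cont}_A(M, A)$.

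For the quasi-inverse, to $E \in \Ban_L$ with a fixed norm $\|\cdot\|$ and unit ball $E^0$ I would associate $\Phi(E) := \Hom^{cont}_A(E^0, A)$, topologised by pointwise convergence. This realises $\Phi(E)$ as a closed $A$-submodule of $\prod_{v \in E^0} A$, whence it is compact, linearly topologised, and torsion-free, so $\Phi(E) \in \mfl(A)$. Rescaling $\|\cdot\|$ by a power of $\varpi_L$ rescales $\Phi(E)$ accordingly, so the assignment descends to a functor on $\Ban_L$ valued in $\mfl(A)_\QQ$. For admissible $\GG$-Banach representations the action dualises to a continuous $A[[\GG]]$-action on $\Phi(E)$; the key assertion that this module is \emph{finitely generated} is postponed.

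Biduality on the compact side becomes formal once one invokes the structure theorem $M \cong \prod_{i \in I} A$ (used already in the proof of Proposition \ref{Misproj}): on such standard $M$ one computes directly that $M^d$ is the Banach space of $L$-valued functions on $I$ vanishing at infinity, and $\Phi$ of that returns $M$. For the Banach side, the evaluation map $E \to \Phi(E)^d$ is an isometric embedding by non-archimedean Hahn-Banach (valid since $L$ is spherically complete), and surjectivity follows because every norm-bounded $A$-linear functional on $\Phi(E)$ arises from evaluation at a point of $E$, again by Hahn-Banach on $E^0$. Compatibility with $\GG$-actions is then routine, and matching the Hom-sets after tensoring with $\QQ$ on both sides is immediate from the definitions.

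The main obstacle is matching the finiteness conditions, i.e.\ showing $M \in \mfl(A)$ is finitely generated over $A[[\GG]]$ if and only if $M^d$ is admissible in the sense of Definition \ref{admissiblebanach}. For the forward direction I would choose a surjection $A[[\GG]]^n \twoheadrightarrow M$, dualise to a closed embedding $M^d \hookrightarrow C(\GG, L)^n$, and verify directly that $C(\GG, L)$ is admissible; closed $\GG$-invariant subspaces of admissible Banach representations are trivially admissible. For the reverse, given a bounded $\GG$-invariant lattice $\Lambda \subseteq E$ with $(E/\Lambda)^\PP$ of cofinite type for some open pro-$p$ $\PP$, dualise to see that $\Phi(E)/\varpi_L \Phi(E)$ becomes finitely generated over $k[[\GG]]$ modulo a suitable ideal supported on $\PP$; a Nakayama-type lift in the spirit of Proposition \ref{Misproj} then promotes this to finite generation of $\Phi(E)$ over $A[[\GG]]$.
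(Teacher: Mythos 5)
The paper offers no proof of this statement: it is imported wholesale from Schneider--Teitelbaum's Iwasawa-theory paper, and even the description of the quasi-inverse functor $E\mapsto E^d$ in the surrounding text is taken directly from their proof of Theorem 1.2. So there is no argument in the paper to compare yours against; what you have written is a reconstruction of the Schneider--Teitelbaum proof, and it should be judged as such.

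Your sketch has the right architecture -- the forward functor via the supremum norm, the quasi-inverse via $\Hom_A(E^0,A)$ with the topology of pointwise convergence, biduality checked on the standard objects $\prod_{i\in I}A$, Hahn--Banach for the isometric embedding $E\hookrightarrow \Phi(E)^d$ (using that $L$, being discretely valued, is spherically complete), and the finiteness comparison postponed to the end. Two places are more delicate than you let on. First, the biduality on the compact side is not formal: the unit ball of $M^d$ for $M=\prod_{i\in I}A$ is the module $c_0(I,A)$ of $A$-valued sequences vanishing at infinity, which is not a free $A$-module, so recovering $\prod_{i\in I}A$ from $\Hom_A(c_0(I,A),A)$ is exactly the non-trivial content of Schneider--Teitelbaum's Theorem 1.2 (and Remark 1.1) and cannot be dismissed as a computation with a dual basis. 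Relatedly, the paper uses $E^d=\Hom_A(E^0,A)$, all $A$-linear maps, whereas you write $\Hom^{cont}_A(E^0,A)$; Schneider--Teitelbaum address precisely why these coincide, and stating the definition with the continuity restriction silently assumes what needs to be proven. Second, the finiteness equivalence is the real content of their Theorem 3.5, and your ``Nakayama-type lift in the spirit of Proposition \ref{Misproj}'' is too compressed: the substantive step is that $(E/\Lambda)^{\PP}$ being of cofinite type dualizes to $\Phi(E)\otimes_A k$ being finitely generated over $k[[\PP]]$, after which finite generation over $A[[\GG]]$ follows because $k[[\GG]]$ is finite over $k[[\PP]]$ and one can apply topological Nakayama; as written, ``modulo a suitable ideal supported on $\PP$'' does not describe a precise module-theoretic operation. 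None of this makes your approach wrong -- it is the right proof -- but these are the two places where a referee would stop you.
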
   
Note that $\mfgfl{A[[\GG]]}_{\QQ}$ is equivalent to $\mfg{L[[\GG]]}$. Let $E$ be an $L$-Banach space the object $E^d$ in $\mfl(A)_{\QQ}$ 
corresponding to $E$ is constructed as follows, see the proof of \cite[Thm 1.2]{iw}. We may choose a norm $\|\centerdot\|$ defining the 
topology on $E$, and such that $\|E\|\subseteq |L|$. Let $E^0$ be the unit ball in $E$ with respect to $\|\centerdot\|$, set  
$$E^d:=\Hom_A(E^0, A),$$
with the topology of pointwise convergence, that is the coarsest locally convex topology such that for each $v\in E^0$ the map 
$E^d\rightarrow A$, $\phi\mapsto \phi(v)$ is continuous.

\begin{lem}\label{dualityvee} 
Let $M$ be in $\mfl(A)$ and  let $(M^d)^0$ be the unit ball in $M^d$ with respect to the supremum norm then there exists a canonical 
isomorphism $(M^d)^0\otimes_A k\cong (M\otimes_A k)^{\vee}$. Conversely let $(E, \|\centerdot\|)$ be an $L$-Banach space, assume that 
$\|E\|\subseteq |L|$. Let $E^0$ be the unit ball in $E$, and let 
$M:=\Hom_A(E^0, A)$ with the topology as above. Then there exists a canonical topological isomorphism $M\otimes_A k\cong (E^0\otimes_A k)^{\vee}$.
\end{lem}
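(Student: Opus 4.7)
The plan is first to observe that $(M^d)^0 = \Hom_A^{cont}(M, A)$, because $\phi \in M^d$ satisfies $\|\phi\| = \max_{v\in M}|\phi(v)| \le 1$ if and only if $\phi(M) \subseteq A$. Applying $\Hom_A^{cont}(M, \centerdot)$ to the short exact sequence $0 \to A \xrightarrow{\varpi_L} A \to k \to 0$ yields a left exact sequence identifying $(M^d)^0 \otimes_A k = \Hom_A^{cont}(M, A)/\varpi_L$ with a submodule of $\Hom_A^{cont}(M, k)$. Since $M \otimes_A k$ is killed by $\varpi_L$ and the $\varpi_L$-torsion of $L/A$ equals $k$, one has canonical identifications $\Hom_A^{cont}(M, k) = \Hom_A^{cont}(M\otimes_A k, k) = (M \otimes_A k)^\vee$. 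Part~(a) therefore reduces to the surjectivity of the reduction map $\Hom_A^{cont}(M, A) \to \Hom_A^{cont}(M, k)$.

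For this surjectivity, I would invoke the structural result \cite[$VII_B$ (0.3.8)]{SGA3} cited in \S\ref{banach}: since $M$ is a flat object of $\mfl(A)$, one has $M \cong \prod_{i \in I} A$ as topological $A$-modules. A continuous $\bar\phi: \prod_I A \to k$ factors through the quotient $\prod_I A/\varpi_L \prod_I A \cong \prod_I k$ (with product topology), and its kernel is open, so contains $\prod_{i \in I \setminus F} k$ for some finite $F \subseteq I$. Hence $\bar\phi = \sum_{i \in F} \bar a_i \bar\pi_i$ for some $\bar a_i \in k$, and lifting each $\bar a_i$ to $a_i \in A$ produces the desired continuous $A$-linear lift $\phi = \sum_{i \in F} a_i \pi_i$, where $\pi_i: \prod_I A \to A$ denotes the $i$-th coordinate projection.

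For part~(b), the module $M := \Hom_A(E^0, A)$ with the topology of pointwise convergence is a closed $A$-submodule of $\prod_{v \in E^0} A$, hence is $A$-torsion free and compact linearly topological, i.e.\ lies in $\mfl(A)$. This is precisely the construction of $E^d$ given just after Theorem~\ref{duality}, so there are canonical identifications $M^d \cong E$ and $(M^d)^0 \cong E^0$. Part~(a) applied to this $M$ produces a canonical isomorphism $E^0 \otimes_A k \cong (M \otimes_A k)^\vee$. Since $(\centerdot)^\vee = \Hom_A^{cont}(\centerdot, L/A)$ is an anti-equivalence between compact linearly topological and discrete $A$-modules satisfying $(\centerdot)^{\vee\vee} \cong \id$, dualizing this isomorphism yields the required canonical topological isomorphism $M \otimes_A k \cong (E^0 \otimes_A k)^\vee$.

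The main obstacle is the surjectivity argument in the second paragraph: although the decomposition $M \cong \prod_I A$ is standard, one must use crucially that continuous $A$-linear maps from a product into a discrete target factor through a finite subproduct, which is what forces the finite support of $\bar\phi$. Once this is in hand, both parts reduce to routine chases through the canonical maps together with Matlis-style duality.
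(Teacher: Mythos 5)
Your proposal is correct and follows essentially the same route as the paper: identify $(M^d)^0$ with $\Hom_A^{cont}(M,A)$, reduce part (a) to the surjectivity of $\Hom_A^{cont}(M,A)\to\Hom_A^{cont}(M,k)$, establish this via the structure theorem $M\cong\prod_{i\in I}A$ and the fact that a continuous map into a discrete module has open kernel (hence factors through a finite subproduct), and obtain part (b) from Theorem \ref{duality}. The only cosmetic difference is that you pass through $\prod_I A/\varpi_L\prod_I A\cong\prod_I k$ before analyzing the kernel, whereas the paper keeps the kernel inside $\prod_I A$ (as $\prod_{j\in J}A\times\prod_{i\in I\setminus J}\MM^n$) and reduces to the finite-rank case; the substance is identical.
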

\begin{proof} The reduction map $A\rightarrow k$ is continuous. Hence, we obtain a homomorphism of $A$-modules $r:\Hom^{cont}_A(M, A)\rightarrow 
\Hom^{cont}_A(M, k)$. We claim that $r$ is surjective. We note that the claim is clear if $M$ is of finite rank. In general $M\cong \prod_{i\in I} A$,
for some set $I$. So if $\phi\in \Hom^{cont}_A(M, k)$ then there exists a subset $J\subseteq I$ with $I\setminus J$ finite and an integer
 $n\ge 1$, 
such that $\prod_{j\in J} A \times \prod_{i\in I\setminus J} \MM^n$ is contained in the kernel of $\phi$, since such subsets form a basis of open 
neighbourhoods of $0$ in $\prod_{i\in I} A$. The problem reduces to showing that the map  $\Hom^{cont}_A(\prod_{i\in I\setminus J} A, A)\rightarrow 
\Hom^{cont}_A(\prod_{i\in I\setminus J} A, k)$ is surjective. Since $I\setminus J$ is finite we are done. The claim yields a short exact sequence 
of $A$-modules:
\begin{equation}\label{FH1}
0\rightarrow \Hom^{cont}_A(M, A)\overset{\varpi_L}{\rightarrow} \Hom^{cont}_A(M, A)\rightarrow  \Hom^{cont}_A(M, k)\rightarrow 0.
\end{equation}     
On the other hand, $\Hom^{cont}_A(M, A)$ is torsion-free and hence flat. So tensoring the short exact sequence 
$0 \rightarrow A\overset{\varpi_L}{\rightarrow} A \rightarrow k\rightarrow 0$ with $\Hom^{cont}_A(M, A)$ we obtain a short exact sequence:
\begin{equation}\label{FH2}
0\rightarrow \Hom^{cont}_A(M, A)\overset{\varpi_L}{\rightarrow} \Hom^{cont}_A(M, A)\rightarrow  \Hom^{cont}_A(M, A)\otimes_A k\rightarrow 0.
\end{equation}   
Now, \eqref{FH1} and \eqref{FH2} imply that the natural map 
$$\Hom^{cont}_A(M, A)\otimes_A k\rightarrow  \Hom^{cont}_A(M, k)$$ 
  is an isomorphism. Hence, $(M^d)^0\otimes_A k \cong  \Hom^{cont}_A(M\otimes_A k, k)\cong (M\otimes_A k)^{\vee}$; $(M^d)^0\otimes_A k$ carries the 
discrete topology, $M\otimes_A k$ is compact and so $(M\otimes_A k)^{\vee}$ also carries the discrete topology. 
The second part follows from Theorem \ref{duality}.
\end{proof}    

\begin{lem}\label{redimq} Let $(E,\|\centerdot\|)$ be an $L$-Banach space, such that $\|E \|\subseteq |L|$. 
Let $(E_1,\|\centerdot\|)$ be a closed subspace.
Then we have an exact sequence of $A$-modules:
\begin{equation}\label{FH3}
 0\rightarrow E_1^0\rightarrow E^0\rightarrow (E/E_1)^0\rightarrow 0,
\end{equation}
 \begin{equation}\label{FH4}
 0\rightarrow E_1^0\otimes_A k \rightarrow E^0\otimes_A k\rightarrow (E/E_1)^0\otimes_A k\rightarrow 0,
\end{equation}
where superscript $0$ denotes the unit ball in the respective Banach space.
\end{lem}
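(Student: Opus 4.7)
The plan is to establish the first exact sequence directly from the definitions of the norm and the unit ball, and then deduce the second by a flatness argument for tensor with $k$. Write $q:E\twoheadrightarrow E/E_1$ for the quotient map. Injectivity of $E_1^0\rightarrow E^0$ is immediate from $E_1^0=E_1\cap E^0$, and the composition into $(E/E_1)^0$ is zero by the definition of the quotient norm. For exactness in the middle of \eqref{FH3}: if $v\in E^0$ and $q(v)=0$ then $v\in E_1$, so $v\in E_1\cap E^0=E_1^0$.

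The only nontrivial point in \eqref{FH3} is surjectivity of $E^0\twoheadrightarrow (E/E_1)^0$. This is where the hypothesis $\|E\|\subseteq |L|$ is used: the quotient norm on $E/E_1$ is defined by $\|\bar v\|=\inf_{w\in E_1}\|v+w\|$, which a priori is a real-number infimum. Since $\|E\|\subseteq |L|$ and $|L^\times|=|\varpi_L|^{\ZZ}$ is discrete in $\mathbb R_{>0}$, for every $\bar v\in E/E_1$ with $\|\bar v\|\le 1$ one can find a representative $v\in v+E_1$ attaining the infimum, and hence lying in $E^0$. So the map is surjective.

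For \eqref{FH4}, I would apply $\otimes_A k$ to the short exact sequence \eqref{FH3}. Since $A$ is a discrete valuation ring, flatness equals $A$-torsion-freeness. Every unit ball $E^0$ (resp.\ $E_1^0$, $(E/E_1)^0$) is $A$-torsion-free, because the ambient $L$-Banach space is an $L$-vector space, so multiplication by $\varpi_L$ is injective. Hence all three modules are $A$-flat and $\Tor_1^A((E/E_1)^0,k)=0$, which upgrades the right-exactness of $\otimes_A k$ to a short exact sequence, giving \eqref{FH4}.

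The main technical point is the surjectivity in \eqref{FH3}, where one must exploit the discreteness of $|L|$ to ensure the quotient norm is actually attained; without the assumption $\|E\|\subseteq|L|$ one would only get a lift with norm strictly less than $1+\epsilon$, which is not enough for the map onto $(E/E_1)^0$ to be surjective. Everything else is formal: exactness in the middle is set-theoretic, and the passage to \eqref{FH4} reduces to a standard $\Tor$ vanishing over a DVR.
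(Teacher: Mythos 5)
Your proof is correct and follows essentially the same route as the paper: surjectivity of $E^0\to(E/E_1)^0$ via attainment of the quotient-norm infimum using discreteness of $|L|$, and then \eqref{FH4} by flatness of the torsion-free $A$-module $(E/E_1)^0$. The only difference is expository: you spell out the injectivity and middle-exactness explicitly and invoke $\Tor$-vanishing, which the paper leaves implicit.
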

\begin{proof} The quotient space $E/E_1$ carries  a norm defined by 
$$\| v+ E_1\|:=\inf_{u\in E_1} \|v+u\|,\quad \forall v\in E.$$
 It is clear that $E^0$ maps into $(E/E_1)^0$. Since $L$ is discretely valued for every $v\in E$ there exists $u\in E_1$ such that $\| v+ E_1\|=\|v+u\|$. 
Hence, we obtain a surjection $E^0\twoheadrightarrow  (E/E_1)^0$. This implies \eqref{FH3}. Now $(E/E_1)^0$ is torsion-free and  hence flat. By tensoring
\eqref{FH3} with $\otimes_A k$ we obtain \eqref{FH4}.
\end{proof}

Let $V$ be an $L$-vector space and $M$ an $A$-submodule of $V$. We say that $M$ is a \textit{lattice} in $V$, if for every $v\in V$ there 
exists $x\in L^{\times}$ such that $xv\in M$, \cite{nfa} \S2. We say that $M$ is \textit{separated} if $\bigcap_{n\ge 0} \varpi_L^n M =0$.
If $E$ is an $L$-Banach space and $M$ is an open lattice in $E$ is separated if and only if it is bounded. Moreover, if $M$ is an 
open separated lattice in $E$ then the gauge of $M$, \cite{nfa} \S2, defined by
$$ \|v\|_M:= \inf_{v\in a M} |a|, \quad \forall v\in E$$ 
is a norm (since $M$ is separated), and the topology on $E$ defined by $\|\centerdot\|_M$ coincides with the original one (since $M$ is
open). If $E$ is an $L$-Banach space representation of $\GG$ then (since $\GG$ is compact) there exists an open separated $\GG$-invariant 
lattice $M$ in $E$, \cite{em1} Lemma 6.5.5, \cite{em2} Example 3.7, Lemma 3.9. Since $M$ is $\GG$-invariant we have $\|gv\|_M=\|v\|_M$ for all 
$v\in M$ and $g\in \GG$, so $E$ is a \textit{unitary} $L$-Banach space representation of $\GG$. 
Since $L$ is discretely valued $\|E\|_M\subseteq |L|$.

\begin{prop}\label{dproj} Let $(E, \|\centerdot\|)$ be a unitary  $L$-Banach space 
representation of $\GG$, such that $\|E\|\subseteq |L|$. Let $E^0$ be the unit
ball in $E$. Suppose that  $E^0\otimes_A k \cong I$, where $I$ is an injective 
admissible object in $\Rep_k(\GG)$. Let $m_S:=\dim_k\Hom_{\GG}(S, I)$ then  there exists a $\GG$-equivariant isometrical isomorphism 
$$(E, \|\centerdot\|)\overset{\cong}{\rightarrow} \bigoplus_{S\in \Irr(\GG)} (P_{S^*}^d)^{\oplus m_S},$$
where $P_{S^*}$ is the projective envelope of $S^*$ in $\CC(A[[\GG]])$, and the right hand-side is equipped with the supremum norm.

In particular, if $\PP$ is an open pro-$p$ subgroup of $\GG$ and $m:=\dim_k I^{\PP}$ then there exists a $\PP$-equivariant isometrical 
isomorphism
$$(E, \|\centerdot\|)\overset{\cong}{\rightarrow} C(\PP,L)^{\oplus m},$$
where $C(\PP,L)$ denotes the space of continuous functions from $\PP$ to $L$ with the supremum norm.
\end{prop}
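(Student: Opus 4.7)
The strategy is to translate everything through the Schneider--Teitelbaum anti-equivalence of Theorem~\ref{duality} and reduce the claim to a structure statement about projective $A[[\GG]]$-modules, which can then be settled using Proposition~\ref{Misproj}.

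Concretely, since $E$ is admissible, Theorem~\ref{duality} yields a finitely generated, $A$-torsion-free $A[[\GG]]$-module $M:=E^d$ with $E\cong M^d$, and Lemma~\ref{dualityvee} gives a canonical identification $M\otimes_A k\cong (E^0\otimes_A k)^{\vee}\cong I^{\vee}$. By Lemma~\ref{decomposeinj} we have $I\cong \bigoplus_{S\in \Irr(\GG)} I_S^{\oplus m_S}$, and the anti-equivalence between $\DD(k[[\GG]])$ and $\CC(k[[\GG]])$ sends $I_S$ to a projective envelope $\overline{P}_{S^*}$ of $S^\vee\cong S^*$. Hence $M\otimes_A k$ is a finitely generated projective object in $\CC(k[[\GG]])$, and Proposition~\ref{Misproj} gives $M\cong \bigoplus_S P_{S^*}^{\oplus m_S}$ in $\mfg{A[[\GG]]}$. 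Dualizing back via $N\mapsto N^d$ produces the desired $\GG$-equivariant isomorphism $E\cong \bigoplus_S (P_{S^*}^d)^{\oplus m_S}$ of admissible Banach space representations. To upgrade this to an isometry, use Lemma~\ref{dualityvee} again: the unit ball of $M^d$ in the supremum norm is $\Hom_A^{cont}(M,A)$, and under the decomposition of $M$ it corresponds to $\bigoplus_S\Hom_A^{cont}(P_{S^*},A)^{\oplus m_S}$, which is precisely the unit ball of the supremum norm on the right-hand side. Since $\|E\|\subseteq |L|$, the lattice $E^0$ determines the norm, so identification of unit balls is the same as isometry.

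For the second statement, restrict the action to the open pro-$p$ subgroup $\PP$. Because $\PP$ is open in $\GG$, compact induction $\cInd_{\PP}^{\GG}$ is an exact left adjoint to $\Res:\Rep_k(\GG)\to \Rep_k(\PP)$, so restriction preserves injectives; thus $I|_{\PP}$ is injective in $\Rep_k(\PP)$, and admissibility transfers automatically (any open pro-$p$ subgroup of $\PP$ is also open pro-$p$ in $\GG$). Since $\Eins$ is the only irreducible smooth $k$-representation of $\PP$, Lemmas~\ref{decomposeinj} and~\ref{injtrivial} give $I|_{\PP}\cong C(\PP,k)^{\oplus m}$ with $m=\dim_k I^{\PP}$. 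The projective envelope of $\Eins^*\cong \Eins$ in $\CC(A[[\PP]])$ is $A[[\PP]]$ itself, with $A[[\PP]]^d=C(\PP,L)$, and applying the first part to $\PP$ yields the isometric isomorphism $E\cong C(\PP,L)^{\oplus m}$.

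The only subtle point, and the one where I would take care, is ensuring that the isomorphism supplied by Proposition~\ref{Misproj} descends to an identification of unit balls, not merely of Banach spaces up to equivalent norms; this is handled by working throughout with the honest module $M=\Hom_A(E^0,A)$ built from the given norm via Lemma~\ref{dualityvee}, rather than with its image in the $\QQ$-ified category $\mfgfl{A[[\GG]]}_{\QQ}$.
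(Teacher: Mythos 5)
Your proposal follows essentially the same line as the paper's proof: pass to the module $M=\Hom_A(E^0,A)$, identify $M\otimes_A k\cong I^{\vee}$ via Lemma~\ref{dualityvee}, decompose $I$ via Lemma~\ref{decomposeinj}, recognise $I_S^{\vee}$ as a projective envelope of $S^*$, apply Proposition~\ref{Misproj} to lift the decomposition, and dualize back. Your extra remark that the isometry (and not merely a bounded isomorphism) comes from Proposition~\ref{Misproj} producing an honest $A[[\GG]]$-module isomorphism --- rather than only an isomorphism in the $\QQ$-ified category of Theorem~\ref{duality} --- is a genuine clarification that the paper leaves implicit.

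There is, however, one gap: you write ``since $E$ is admissible, Theorem~\ref{duality} yields a finitely generated, $A$-torsion-free $A[[\GG]]$-module $M:=E^d$'', but admissibility of $E$ as a Banach space representation is not among the hypotheses. The hypothesis is only that $E^0\otimes_A k\cong I$ is an admissible smooth $k$-representation. Deducing from this that $E$ is admissible in the sense of Definition~\ref{admissiblebanach} (equivalently, that $E^d$ is finitely generated over $A[[\GG]]$) is a non-trivial dévissage along the $\varpi_L$-adic filtration of $E^0$; the paper invokes \cite{em1} Prop.~6.5.7 for precisely this step. You should either cite that result or supply the argument, since without it you cannot legitimately apply Theorem~\ref{duality} to get finite generation of $M$, nor invoke Proposition~\ref{Misproj}, which also requires $M$ finitely generated. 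The rest of the argument, including the reduction to $\PP$ via restriction preserving injectives and identifying $A[[\PP]]^d\cong C(\PP,L)$, is correct and matches the paper.
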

\begin{proof} Since $E^0\otimes_A k$ is 
 admissible in $\Rep_k(\GG)$, it follows from \cite{em1} Proposition 6.5.7 that $E$ is an 
admissible $L$-Banach space representation of $\GG$ in the sense of Definition \ref{admissiblebanach}. 
Set $M:=\Hom_A(E^0, A)$, then 
Lemma \ref{dualityvee} implies that $M\otimes_A k\cong I^{\vee}$. It follows 
from Lemma \ref{decomposeinj} that $I\cong \oplus_S I_S^{\oplus m_S}$.
Since $I_S$ is injective $I^\vee_S$ is a projective $k[[\GG]]$-module. Since $I_S$ is admissible $I^{\vee}_S$ is finitely generated over $k[[\GG]]$, 
\cite{vignak} or the proof of \cite{em1} Proposition 6.5.7. Since $I_S$ is an injective envelope of $S$, $I_S^{\vee}$ is a projective envelope of 
$S^*$ in  $\mfg{k[[\GG]]}$. Proposition \ref{Misproj} implies that $M\cong \oplus P_{S^*}^{\oplus m_S}$. 
The assertion follows from Theorem \ref{duality}. 

If $\PP$ is an open pro-$p$ group of $\GG$ then $I|_{\PP}$ is an admissible injective object in $\Rep_k(\PP)$. Moreover, since 
$\PP$ is a pro-$p$ group the only irreducible representation of $\PP$ is the trivial one $\Eins$. And 
$m_{\Eins}=\dim_k \Hom_{\GG}(\Eins, I)=m$. The space of continuous functions $C(\PP, k)$ from $\PP$ to $k$ is an injective envelope
of $\Eins$ in $\Rep_k(\PP)$, Lemma \ref{injtrivial}. So $I|_{\PP}\cong C(\PP,k)^{\oplus m}$ hence $I^{\vee}|_{\PP}\cong k[[\PP]]^{\oplus m}$ as a $k[[\PP]]$-module.
This implies that $M\cong A[[\PP]]^{\oplus m}$. It follows from \cite{iw} Lemma 2.1, Corollary 2.2 that 
$(A[[\PP]])^d\cong C(\PP, L)$. 
\end{proof}

\begin{cor}\label{liftinjective} Let $I$ be an admissible injective object in $\Rep_k(\GG)$ then there exists an admissible unitary $L$-Banach space representation 
$(E, \|\centerdot\|)$, such that $E^0\otimes_A k\cong I$, where $E^0$ is the unit ball in $E$.  
\end{cor}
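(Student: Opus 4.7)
The plan is to dualize $I$, lift the resulting projective $k[[\GG]]$-module to a projective $A[[\GG]]$-module, and then apply the Schneider--Teitelbaum duality of Theorem \ref{duality} to produce the Banach space representation $E$.

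First, since $I$ is admissible, Lemma \ref{decomposeinj} yields a decomposition $I\cong \bigoplus_{S\in \Irr(\GG)} I_S^{\oplus m_S}$ with $m_S:=\dim_k\Hom_{\GG}(S,I)<\infty$ and only finitely many $S$ contributing (Proposition \ref{indecomp}). Dualizing with $M\mapsto M^{\vee}$ gives $I^{\vee}\cong \bigoplus_S (I_S^{\vee})^{\oplus m_S}$ in $\CC(k[[\GG]])$. Each $I_S^{\vee}$ is a projective envelope of $S^*$ in $\CC(k[[\GG]])$, and by the remarks before Proposition \ref{good} (cf.\ \cite{vignak}) admissibility of $I_S$ implies that $I_S^{\vee}$ is finitely generated over $k[[\GG]]$.

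Second, I would lift to characteristic zero. By Proposition \ref{good}, each irreducible $A[[\GG]]$-module $S^*$ (which by Proposition \ref{indecomp} coincides with the irreducible $k[[\GG]]$-modules) admits a projective envelope $P_{S^*}$ in $\mfg{A[[\GG]]}$, and $P_{S^*}$ is a direct summand of $A[[\GG]]$, hence in particular $A$-torsion free. Set
\[
P:=\bigoplus_{S\in \Irr(\GG)} P_{S^*}^{\oplus m_S}.
\]
Then $P$ lies in $\mfgfl{A[[\GG]]}$, and Lemma \ref{redproj} gives $P\otimes_A k\cong \bigoplus_S (I_S^{\vee})^{\oplus m_S}\cong I^{\vee}$.

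Third, I would apply the duality. By Theorem \ref{duality}, the object $E:=P^d$ is an admissible $L$-Banach space representation of $\GG$. Equipping $E$ with the supremum norm $\|\ell\|:=\max_{v\in P}|\ell(v)|$, the unit ball is $E^0=\Hom_A^{cont}(P,A)$; because $P$ is an $A[[\GG]]$-module the $\GG$-action on $E$ preserves this norm, so $E$ is unitary, and clearly $\|E\|\subseteq |L|$. Finally, by Lemma \ref{dualityvee},
\[
E^0\otimes_A k \;\cong\; (P\otimes_A k)^{\vee}\;\cong\; (I^{\vee})^{\vee}\;\cong\; I,
\]
as required.

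I do not anticipate a genuine obstacle: all the machinery is already in place, and the proof amounts to assembling Lemmas \ref{decomposeinj}, \ref{redproj} and \ref{dualityvee} together with Proposition \ref{good} and Theorem \ref{duality}. The one point that requires a moment of care is verifying that the lift $P$ has the correct reduction, which is where Lemma \ref{redproj} (identifying $P_{S^*}\otimes_A k$ with a projective envelope of $S^*$ over $k[[\GG]]$) is essential.
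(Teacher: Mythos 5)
Your proof is correct and takes essentially the same approach as the paper's: the paper's own proof simply says to choose a finitely generated projective $P$ in $\CC(A[[\GG]])$ with $P\otimes_A k\cong I^\vee$, set $E:=P^d$, and invoke Lemma \ref{dualityvee}, while you make the existence of such a $P$ explicit by assembling Lemmas \ref{decomposeinj} and \ref{redproj} with Proposition \ref{good}, exactly mirroring the argument already carried out in the proof of Proposition \ref{dproj}.
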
 
\begin{proof} Let $P$ be a finitely generated projective object in $\CC(A[[\GG]])$ such that $P\otimes_A k\cong I^\vee$. Set $E:=P^d$, with the
supremum norm. Lemma \ref{dualityvee} implies the assertion.
\end{proof}

\begin{lem}\label{dcc} Let $E$ be an admissible $L$-Banach space representation of $\GG$, then any decreasing sequence of closed $\GG$-invariant 
subspaces becomes constant.
\end{lem}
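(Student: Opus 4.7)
The plan is to transfer the descending chain condition on closed $\GG$-invariant subspaces of $E$ to an ascending chain condition on submodules of the dual module, and then invoke Noetherianness. More precisely, by Theorem \ref{duality} the functor $E\mapsto E^d$ gives an anti-equivalence of abelian categories between $\Ban^{adm}_L(\GG)$ and $\mfg{L[[\GG]]}$, and since $A[[\GG]]$ is Noetherian (as noted before Proposition \ref{good}), so is $L[[\GG]]=A[[\GG]]\otimes_A L$. Thus every finitely generated $L[[\GG]]$-module is Noetherian, which will give us what we need once we have made the translation.

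First I would verify that every closed $\GG$-invariant subspace $E_i$ of $E$ is itself an object of $\Ban^{adm}_L(\GG)$. Choose a bounded open $\GG$-invariant $A$-lattice $M\subseteq E$ witnessing admissibility in the sense of Definition \ref{admissiblebanach}; then $M_i:=M\cap E_i$ is a bounded open $\GG$-invariant $A$-lattice in $E_i$. The inclusion $E_i\hookrightarrow E$ induces an $A$-linear injection $E_i/M_i\hookrightarrow E/M$, hence an injection $(E_i/M_i)^{\PP}\hookrightarrow (E/M)^{\PP}$ for any open pro-$p$ subgroup $\PP$. Since $A$ is Noetherian, submodules of $A$-modules of cofinite type are again of cofinite type (the Pontryagin dual of a submodule is a quotient of the dual of the ambient module, hence still finitely generated over $A$). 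Therefore $E_i$ is admissible.

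Second, under the anti-equivalence of Theorem \ref{duality}, the inclusions $E\supseteq E_1\supseteq E_2\supseteq\ldots$ correspond to a sequence of surjections $E^d\twoheadrightarrow E_1^d\twoheadrightarrow E_2^d\twoheadrightarrow\ldots$ of finitely generated $L[[\GG]]$-modules. Setting $K_i:=\Ker(E^d\twoheadrightarrow E_i^d)$ we obtain an ascending chain
\begin{equation*}
K_1\subseteq K_2\subseteq K_3\subseteq\ldots
\end{equation*}
of submodules of the finitely generated $L[[\GG]]$-module $E^d$. Since $L[[\GG]]$ is Noetherian, this chain stabilizes. Applying the anti-equivalence once more, the original chain $E\supseteq E_1\supseteq E_2\supseteq\ldots$ stabilizes as well.

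The only non-routine point is the verification that closed subrepresentations of admissible Banach space representations are admissible, which was addressed above and relies only on the Noetherianness of $A$ together with the ``bounded open lattice'' reformulation of admissibility. Once this is in hand, the argument is purely a matter of translating between the two sides of Theorem \ref{duality} and invoking that $L[[\GG]]$ is Noetherian.
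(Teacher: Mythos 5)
Your proof is correct and takes essentially the same approach as the paper: dualize via Theorem \ref{duality} to convert the descending chain of closed subspaces into an ascending chain of kernels inside the finitely generated $L[[\GG]]$-module $E^d$, then invoke Noetherianness of $L[[\GG]]$. The paper skips your explicit verification that each $E_i$ is itself admissible (one can also get this for free from the duality, since $E_i^d$ is a quotient of the finitely generated module $E^d$), but that paragraph is a harmless and reasonable addition.
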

\begin{proof} Suppose that $E_i\subseteq E$ is a closed $\GG$-invariant subspace. 
 Dually we have a surjection of $L[[\GG]]$-modules
$E^d \twoheadrightarrow E_i^d$, recall that  $\mfgfl{A[[\GG]]}_{\QQ}$ is equivalent to $\mfg{L[[\GG]]}$. Let $M_i$ denote the kernel, then we obtain an increasing sequence of $L[[\GG]]$-submodules of $E^d$, 
$M_1\subseteq M_2\subseteq \ldots E^d$. Since $E$ is admissible $E^d$ is finitely generated, and since $L[[\GG]]$ is noetherian  there exists $m$ such that $M_i=M_m$ for all $i\ge m$. Hence $E_i^d=E_m^d$ and so $E_i=E_m$ for all $i\ge m$.
\end{proof}

\section{Lifting $\Omega$}\label{sectionliftomega}
We assume throughout that $p\neq 2$. Let $G:=\GL_2(F)$, $Z$ the centre of $G$, $K:=\GL_2(\oF)$, 
$$I:=\begin{pmatrix} \oF^{\times}& \oF \\ \pF & \oF^{\times}\end{pmatrix}, \quad I_1:=\begin{pmatrix} 1+\pF & \oF \\ \pF & 1+\pF \end{pmatrix}.$$
Let $\KK_0$ be the $G$-normalizer of $K$, and $\KK_1$ be the $G$-normalizer of $I$, then $\KK_0=KZ$ and $\KK_1$ is generated as a group 
by $I$ and the element $\Pi:=\bigl (\begin{smallmatrix} 0 & 1 \\ \pif & 0\end{smallmatrix} \bigr )$. We fix a uniformizer $\pif$ of $F$, and consider 
as an element of $Z$, via $Z\cong F^{\times}$.

\begin{thm}\label{liftomega} Let $\Omega\in \Rep_k(G)$ be such that $\pif$ acts trivially, 
$\Omega|_K$ is an injective admissible object in $\Rep_k(K)$. 
 Then there exists a unitary admissible $L$-Banach space representation 
$(E,\|\centerdot\| )$  such that
$$E^0\otimes_A k\cong \Omega,$$
as $G$-representations, where $E^0$ denotes the unit ball in $E$, with respect to $\|\centerdot\|$.
\end{thm}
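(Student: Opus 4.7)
The plan is to execute the strategy sketched in the introduction: lift $\Omega$ separately as admissible unitary $L$-Banach space representations of $\GG_0 := \KK_0/\pif^{\ZZ}$ and $\GG_1 := \KK_1/\pif^{\ZZ}$, glue the two lifts compatibly over their intersection, and use the amalgamation $G = \KK_0 *_{\KK_0 \cap \KK_1} \KK_1$ to assemble a $G$-action. Under the natural maps, $\GG_0 \cong K$, the group $\GG_1$ is generated by $I$ and (the image of) $\Pi$ with the relation $\Pi^2 = 1$, and $\GG_0 \cap \GG_1 \cong I$. The hypothesis $p \neq 2$ ensures that the pro-$p$ Iwahori $I_1$ is an open normal pro-$p$ subgroup of each of $\GG_0$, $\GG_1$, and $I$, with quotient of order prime to $p$ in each case.

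The first step is to verify that $\Omega|_{\GG_1}$ is admissible and injective in $\Rep_k(\GG_1)$. Since $\Omega|_{\GG_0}$ is admissible injective, Lemmas \ref{decomposeinj} and \ref{injtrivial} show that $\Omega|_{I_1}$ is a direct sum of copies of $C(I_1,k)$ and hence admissible injective in $\Rep_k(I_1)$. Admissibility of $\Omega|_{\GG_1}$ is then immediate, and injectivity promotes from $I_1$ to $\GG_1$ by a standard averaging argument relying on the semisimplicity of $k[\GG_1/I_1]$. Corollary \ref{liftinjective} then produces admissible unitary $L$-Banach space representations $(E_i, \|\centerdot\|)$ of $\GG_i$ with $E_i^0 \otimes_A k \cong \Omega|_{\GG_i}$; dually, $P_i := \Hom_A(E_i^0, A)$ is a finitely generated projective $A[[\GG_i]]$-module with $P_i \otimes_A k \cong (\Omega|_{\GG_i})^{\vee}$.

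For the gluing step, observe that $A[[\GG_i]]$ is free of finite rank over $A[[I]]$, so each $P_i|_I$ is a finitely generated projective $A[[I]]$-module, $A$-torsion free, with $P_i \otimes_A k|_I \cong (\Omega|_I)^{\vee}$ (the latter being projective in $\CC(k[[I]])$ by the same injectivity argument applied to the pair $I_1 \triangleleft I$). Proposition \ref{Misproj}, applied with the group $I$, then yields an isomorphism $P_0|_I \cong P_1|_I$ of $A[[I]]$-modules via the multiplicity description. Using Corollary \ref{liftisomorphism} I adjust this by an $A[[I]]$-automorphism of $P_0|_I$ so that its reduction modulo $\MM$ is the canonical identification $P_0 \otimes_A k|_I \cong (\Omega|_I)^{\vee} \cong P_1 \otimes_A k|_I$. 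Dualizing via Theorem \ref{duality} yields an $I$-equivariant isometric isomorphism $\phi^d \colon E_1 \to E_0$ whose reduction modulo $\MM$ is the identity on $\Omega|_I$.

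Finally, set $E := E_0$ and transfer the $\GG_1$-action to $E$ through $\phi^d$; the two resulting actions agree on $I = \GG_0 \cap \GG_1$ by $I$-equivariance of $\phi^d$. Letting $\pif$ act trivially extends each $\GG_i$-action to a $\KK_i$-action on $E$ by isometries, and the amalgamation $G = \KK_0 *_{\KK_0 \cap \KK_1} \KK_1$ assembles these into a well-defined $G$-action on $E$ by isometries; continuity is a routine check using continuity of each $\KK_i$-action. Admissibility of $E$ as a $G$-Banach space representation follows from admissibility of $E^0 \otimes_A k \cong \Omega$ as a $G$-representation in $\Rep_k(G)$ (compare the proof of Proposition \ref{dproj}), and the identification $E^0 \otimes_A k \cong \Omega$ as $G$-representations is built into the construction. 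The main obstacle is this last compatibility: although $P_0$ and $P_1$ are each determined up to isomorphism by the given data, $\phi$ is not canonical, and one must choose it carefully so that its mod $\MM$ reduction recovers the given $G$-action on $\Omega$ rather than some twist.
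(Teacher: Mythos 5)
Your proposal follows the same strategy as the paper's proof: lift $\Omega$ over each of $\GG_0$ and $\GG_1$ via Corollary \ref{liftinjective}, match the two lifts over $I$ by combining Proposition \ref{Misproj} with Corollary \ref{liftisomorphism} so that the identification reduces to the given one modulo $\MM$, and then glue by the amalgamation $G=\KK_0 *_{IZ}\KK_1$. The extra details you supply (the averaging argument for injectivity over $\GG_1$, and the explicit use of Proposition \ref{Misproj} before invoking Corollary \ref{liftisomorphism}) are correct and fill in steps that the paper leaves implicit.
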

\begin{proof} Since $\Omega|_K$ is injective in $\Rep_k(K)$, $\Omega|_I$ is injective in $\Rep_k(I)$.
Since $\pif$ acts trivially on $\Omega$, we may consider $\Omega$ as a representation of  $\GG:=\KK_1/\pif^{\ZZ}$.
The index of $I$ in $\GG$ is $2$, and since $p\neq 2$ by assumption, we get that the maximal pro-$p$ subgroup 
of $\GG$ is contained in $I$. This implies that $\Omega|_{\GG}$ is an injective representation of $\GG$. Corollary
\ref{liftinjective} applied to $(\GG, \Omega)$, gives an admissible unitary $L$-Banach space representation $(E_1, \|\centerdot\|_1)$
of $\GG$ such that we have an isomorphism of $\GG$-representations $\iota_1:E_1^0\otimes_A k \cong \Omega$. We let $\pif$ act trivially 
on $E_1$, so that $\iota_1$ is $\KK_1$-equivariant. Corollary \ref{liftinjective} applied to $(K, \Omega)$  gives an admissible unitary 
$L$-Banach space representation $(E_0, \|\centerdot\|_0)$
of $K$ such that we have an isomorphism of $\GG$-representations $\iota_0:E_0^0\otimes_A k \cong \Omega$. We let $\pif$ act trivially 
on $E_0$, so that $\iota_0$ is $\KK_0$-equivariant. Now $\iota_0^{-1} \circ \iota_1$ induces an $ZI$-equivariant isomorphism 
$E_1^0\otimes_A k\cong E_0^0\otimes_A k$. It follows from Corollary \ref{liftisomorphism} that there exists a $ZI$-equivariant isometrical isomorphism: 
$$\phi: (E_1, \|\centerdot\|_1)\cong (E_0,\|\centerdot\|_0),$$
such that $\phi\otimes 1=  \iota_0^{-1} \circ \iota_1$. We may transport the action of $\KK_1$ on $(E_0,\|\centerdot\|_0)$, by setting 
$$g\centerdot v= \phi(g \phi^{-1}(v)), \quad \forall v\in E_0, \forall g\in \KK_1.$$    
If we restrict to $IZ=\KK_0\cap \KK_1$ the two actions coincide, since $\phi$ is $IZ$-equivariant. Since $G$ is an amalgam of $\KK_0$ and 
$\KK_1$ along $IZ$, the two actions glue to an action of $G$. So we get an $L$-Banach space representation of $G$ on $(E, \|\centerdot \|)$, 
which is unitary, since it is unitary for the actions of $\KK_0$ and $\KK_1$. By construction we obtain a $G$-equivariant isomorphism 
$E^0\otimes_A k\cong \Omega$. Instead of using the amalgamation, one could also argue formally as in \cite[Cor. 5.5.5]{coeff}.
\end{proof}  

We note that although the lifts $(E_i,\|\centerdot \|_i)$  are unique up to $\KK_i$-equivariant isometry, there is no 
unique way to choose $\phi$, so the Banach space representation $(E,\|\centerdot\|)$ is not canonical. Moreover, it is enough to 
assume that $\pif$ acts by a scalar on $\Omega$, since after twisting by an unramified character we may get to the situation of Theorem \ref{liftomega}.
The following is a Banach space analog of \cite[Cor. 9.11]{bp}.

\begin{prop}\label{converse} Let $(E_1, \|\centerdot\|_1)$ be an admissible unitary $L$-Banach space representation of $G$, 
such that $\pif$ acts trivially and $\|E_1\|_1\subseteq |L|$.
 Let $\sigma$ be the $K$-socle of $E^0_1\otimes_A k$, 
then there exists a unitary $L$-Banach space representation $(E, \|\centerdot\|)$ of $G$, such that the restriction of $E^0\otimes_A k$ to $K$ is 
an injective envelope of $\sigma$ in $\Rep_k(K)$ and a $G$-equivariant isometry $(E_1, \|\centerdot\|_1)\hookrightarrow (E,\|\centerdot\|)$.
\end{prop}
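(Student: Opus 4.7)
Plan: The strategy is to first find a smooth $k[G]$-representation $\Omega$ playing the role of a universal target on $K$, then lift to the Banach setting via Theorem~\ref{liftomega}, and finally promote the mod-$\MM$ embedding to an isometric one.

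Set $\overline{E_1}:=E_1^0\otimes_A k$, a smooth admissible $k[G]$-representation on which $\pif$ acts trivially and whose $K$-socle equals $\sigma$. By the smooth analogue \cite[Cor.~9.11]{bp}, there exists $\Omega\in\Rep_k(G)$ on which $\pif$ acts trivially such that $\Omega|_K$ is an injective envelope of $\sigma$ in $\Rep_k(K)$ and such that we have a $G$-equivariant injection $\overline{E_1}\hookrightarrow \Omega$. Theorem~\ref{liftomega} then yields an admissible unitary $L$-Banach space representation $(E,\|\centerdot\|)$ of $G$ with $E^0\otimes_A k\cong \Omega$ as $G$-representations.

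To construct the $G$-equivariant isometric embedding $E_1\hookrightarrow E$, I dualise. Let $\mathcal M:=E^d$ and $\mathcal N:=E_1^d$, and write $\mathcal M_i\in \mfgfl{A[[\KK_i/\pif^{\ZZ}]]}$ for $\mathcal M$ viewed as a module over $\KK_i/\pif^{\ZZ}$. By the construction of $E$ in Theorem~\ref{liftomega}, each $\mathcal M_i$ is projective and $\mathcal M$ is obtained by gluing $\mathcal M_0$ and $\mathcal M_1$ along a $ZI$-equivariant isomorphism $\phi^d\colon \mathcal M_0|_{ZI}\xrightarrow{\sim}\mathcal M_1|_{ZI}$ whose reduction modulo $\MM$ is the canonical identification induced by $E^0\otimes_A k\cong\Omega$. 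The dual of $\overline{E_1}\hookrightarrow\Omega$ is a $G$-equivariant surjection $\overline{\mathcal M}\twoheadrightarrow\overline{\mathcal N}$; projectivity of $\mathcal M_i$ together with Nakayama's lemma (as in Proposition~\ref{Misproj}) lifts its restriction to $\KK_i$ to $\KK_i$-equivariant surjections $\psi_i\colon \mathcal M_i\twoheadrightarrow \mathcal N|_{\KK_i}$. By Theorem~\ref{duality} and Lemma~\ref{dualityvee}, each $\psi_i$ dualises to a $\KK_i$-equivariant isometric embedding $E_1\hookrightarrow E$ lifting the mod-$\MM$ inclusion, and it remains to glue them into a $G$-equivariant map, i.e.\ to arrange the equality $\psi_1|_{ZI}\circ\phi^d=\psi_0|_{ZI}$.

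The main obstacle is precisely this gluing condition. Both $\psi_0|_{ZI}$ and $\psi_1|_{ZI}\circ\phi^d$ are $ZI$-equivariant surjections $\mathcal M_0|_{ZI}\twoheadrightarrow\mathcal N|_{ZI}$ with the same reduction modulo $\MM$, so I would resolve it via the following strengthening of Proposition~\ref{2surj}: if $f_1,f_2\colon P\twoheadrightarrow M$ are surjections in $\mfgfl{A[[\GG]]}$ with $P$ projective and with the same reduction modulo $\MM$, then there exists $\theta\in\Aut_{A[[\GG]]}(P)$ with $\theta\equiv\id\pmod\MM$ and $f_2\circ\theta=f_1$. The proof is short: since $M$ is $A$-torsion free, $f_1-f_2$ factors uniquely as $\varpi_L g$ for some $A[[\GG]]$-linear $g\colon P\to M$; projectivity of $P$ and surjectivity of $f_2$ give $h\colon P\to P$ with $f_2\circ h=g$; then $\theta:=\id+\varpi_L h$ is an automorphism (as $\varpi_L h$ is topologically nilpotent) satisfying $f_2\circ\theta=f_1$. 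Applying this with $f_1=\psi_0|_{ZI}$ and $f_2=\psi_1|_{ZI}\circ\phi^d$ produces $\theta\equiv\id\pmod\MM$ with $\psi_1|_{ZI}\circ(\phi^d\circ\theta)=\psi_0|_{ZI}$; replacing $\phi^d$ by $\phi^d\circ\theta$ in the construction of $E$ leaves the amalgamation modulo $\MM$ (hence the isomorphism $E^0\otimes_A k\cong\Omega$) unchanged, and the two $\KK_i$-equivariant embeddings now glue into the required $G$-equivariant isometric embedding $E_1\hookrightarrow E$.
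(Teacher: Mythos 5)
Your proposal is correct and follows the same overall strategy as the paper: embed $E_1^0\otimes_A k$ into $\Omega$ via \cite[Cor.~9.11]{bp}, lift $\Omega$ to $E$ via Theorem~\ref{liftomega}, dualise, lift the reduction of the embedding to $\KK_i$-equivariant surjections $\psi_i$ on the $A[[\GG_i]]$-modules, and then fix the gluing so the two lifts become a single $G$-equivariant map. The one genuine variation is in how the gluing is fixed. The paper applies Proposition~\ref{2surj} as stated, producing some $A[[I]]$-automorphism $\phi$ with $\psi_0 = \psi_1\circ\phi$, re-amalgamates $E|_{\KK_0}$ and $E|_{\KK_1}$ along $\phi^d$, and observes that since the $\KK_0$-structure (hence the $K$-structure) of $E$ is untouched by re-amalgamation, the required condition that $E^0\otimes_A k|_K$ is an injective envelope of $\sigma$ is automatic; no control of $\phi$ modulo $\MM$ is needed. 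You instead prove (correctly) a strengthening of Proposition~\ref{2surj} under the additional hypothesis that $M$ is $A$-torsion free, yielding an automorphism $\theta\equiv\id\pmod\MM$, and use it so that the modified amalgamation even preserves $E^0\otimes_A k\cong\Omega$ as a $G$-representation, not only as a $K$-representation. Your strengthening is a clean and valid lemma (the $\varpi_L$-factorisation and topological nilpotence argument is fine), but it is slightly more than the Proposition demands: the paper's route avoids it entirely by noticing that only the $K$-restriction is being claimed. Both arguments are correct; yours gives a marginally stronger conclusion at the cost of one extra lemma.
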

\begin{proof} Since $E_1$ is an admissible Banach space representation, $E^0_1\otimes_A k$ is an admissible smooth $k$-representation of $G$. 
By \cite[Cor. 9.11]{bp} there exists a $G$-equivariant embedding $\iota:E^0_1\otimes_A k\hookrightarrow \Omega$, where $\Omega$ is a smooth $k$-representation
of $G$, such that $\Omega|_K$ is an injective envelope of $\sigma$ in $\Rep_k(K)$. Let $(E, \|\centerdot\|)$ be a lift of $\Omega$, given by Theorem 
\ref{liftomega}.
For $i\in \{0, 1\}$ set $\GG_i:=\KK_i/\pif^{\ZZ}$ then dually, we have a diagram of $A[[\GG_i]]$-modules:
\begin{displaymath}
\xymatrix{ (E^0)^d\ar[d] & (E^0_1)^d\ar[d] \\ \Omega^{\vee}\ar[r]^-{\iota^{\vee}} & (E^0\otimes_A k)^{\vee}}
\end{displaymath}
Since $\Omega$ is injective, $\Omega^{\vee}$ is a projective $k[[\GG_i]]$-module and so $(E^0)^d$ is a projective $A[[\GG_i]]$-module, Proposition \ref{dproj}.
Hence there exists an $A[[\GG_i]]$-module homomorphism $\psi_i: (E_0)^d \rightarrow (E^0_1)^d$ making the diagram commute. Nakayama's Lemma implies that 
$\psi_i$ is surjective, see the proof of Proposition \ref{Misproj}. Since $\Omega|_I$ is injective in $\Rep_k(I)$, $(E^0)^d$ is a projective $A[[I]]$-module. 
Since $\psi_1$ and $\psi_2$ are also homomorphisms of $A[[I]]$-modules, Proposition \ref{2surj} gives us $\phi\in \Aut_{A[[I]]}((E^0)^d)$ such that 
$\psi_1=\psi_2\circ \phi$. We let $\pif$ act trivially everywhere. Dually we get a $\KK_0$-equivariant isometry $\psi^d_0: (E_1, \|\centerdot\|)\hookrightarrow 
(E, \|\centerdot\|)$, a $\KK_1$-equivariant isometry    
$\psi^d_1: (E_1, \|\centerdot\|)\hookrightarrow (E,\|\centerdot\|)$ and an $IZ=\KK_0\cap \KK_1$-equivariant 
isometrical isomorphism $\phi^d: (E, \|\centerdot\|)\cong (E,\|\centerdot\|)$, such that 
$\psi^d_1=\phi^d\circ \psi_2^d$. The data $E|_{\KK_0}$, $E|_{\KK_1}$, $\phi^d$ 
glues to a new representation of $G$, $(E', \|\centerdot\|)$ as in the proof of Theorem \ref{liftomega}, and by construction $E'|_K=E|_K$. Moreover, the map 
$\psi_0^d: E_1\rightarrow E'$ is a $G$-equivariant isometry, since by construction it is $\KK_0$ and $\KK_1$-equivariant isometry and these groups generate $G$.
\end{proof}   

\begin{cor}\label{cor3} Let $\kappa$ be an  irreducible smooth admissible $k$-representation of $G$, such that $\pif$ acts trivially. Then there exists 
an admissible topologically irreducible unitary $L$-Banach space representation $(E, \|\centerdot\|)$, such that $\Hom_G(\kappa, E^0\otimes_A k)\neq 0$.
\end{cor}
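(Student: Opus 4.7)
The plan is to lift an appropriate mod-$p$ ambient representation containing $\kappa$ to a Banach space representation, and then cut out a topologically irreducible closed subspace whose reduction modulo $\varpi_L$ still sees $\kappa$.

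First, I would invoke \cite{bp} (as recalled in the introduction) to produce a smooth admissible $k$-representation $\Omega$ of $G$ on which $\pif$ acts trivially, equipped with a $G$-equivariant embedding $\kappa \hookrightarrow \Omega$, such that $\Omega|_K$ is an injective envelope of $\kappa|_K$ in $\Rep_k K$. In particular, the inclusion $\kappa \hookrightarrow \Omega$ is essential as a $K$-representation, hence automatically essential as a $G$-representation: every non-zero $G$-subrepresentation of $\Omega$ meets $\kappa$, and since $\kappa$ is $G$-irreducible it must actually contain $\kappa$. Applying Theorem \ref{liftomega} to $\Omega$ then yields an admissible unitary $L$-Banach space representation $(E_0, \|\centerdot\|_0)$ of $G$, which we may take with $\|E_0\|_0 \subseteq |L|$, together with a $G$-equivariant isomorphism $E_0^0 \otimes_A k \cong \Omega$.

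Next, I would extract a minimal non-zero closed $G$-invariant subspace of $E_0$. Lemma \ref{dcc} gives DCC on closed $G$-invariant subspaces of $E_0$. Under the duality $E \mapsto E^d$, such subspaces correspond to quotients of the finitely generated $L[[\GG]]$-module $E_0^d$, equivalently to its submodules; since $L[[\GG]]$ is noetherian, every non-empty family of submodules has a maximal element, and dually the non-empty family of non-zero closed $G$-invariant subspaces of $E_0$ has a minimal element $E$. By minimality, $E$ is topologically irreducible. Admissibility of $E$ follows from the fact that $E^d$ is a quotient of the finitely generated module $E_0^d$, hence itself finitely generated. Equip $E$ with the restriction of $\|\centerdot\|_0$, and set $E^0 := E \cap E_0^0$.

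To finish, I would verify the socle condition. Lemma \ref{redimq} applied to the isometric closed embedding $E \hookrightarrow E_0$ produces a $G$-equivariant injection $E^0 \otimes_A k \hookrightarrow E_0^0 \otimes_A k \cong \Omega$. Any norm-one vector of $E$ lies outside $\varpi_L E^0$, so $E^0 \otimes_A k \neq 0$. Hence $E^0 \otimes_A k$ is a non-zero $G$-subrepresentation of $\Omega$, and by the essentiality argument above it contains $\kappa$, which gives $\Hom_G(\kappa, E^0 \otimes_A k) \neq 0$.

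The main obstacle is purely organizational: no step is computationally hard, but I must be sure that the ambient $\Omega$ with the right essentiality/socle properties is genuinely available (imported from \cite{bp}), that the minimal closed subspace exists (a short combination of Lemma \ref{dcc} with noetherianness of $L[[\GG]]$), and that reduction modulo $\varpi_L$ commutes with closed embeddings of unit balls (Lemma \ref{redimq}). Once these are in place the argument is a direct assembly of the machinery developed in \S\ref{modules}--\S\ref{sectionliftomega}.
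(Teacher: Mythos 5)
Your proposal is correct and follows essentially the same route as the paper's proof: embed $\kappa$ into $\Omega$ as in \cite{bp}, lift to a Banach space representation via Theorem \ref{liftomega}, take a (topologically) irreducible subobject via Lemma \ref{dcc}, and use Lemma \ref{redimq} plus the injectivity of $\Omega|_K$ to conclude. The only differences are cosmetic: you spell out the existence of a minimal closed subspace via duality and noetherianness of $L[[\GG]]$, and make the essentiality-over-$G$ argument explicit, where the paper states these more tersely.
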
 
\begin{proof} We may embed $\kappa\hookrightarrow \Omega$, where $\Omega$ is a smooth $k$-representation of $G$, such that $\Omega|_K$ is an injective envelope 
of $\kappa$ in $\Rep_k(K)$. Let $(E', \|\centerdot\|')$ be a unitary $L$-Banach space representation of $G$ lifting  $\Omega$ as in Theorem \ref{liftomega}. Since $E'$ 
is admissible, Lemma \ref{dcc} implies that $E'$ has an irreducible subobject $E$. Lemma \ref{redimq} gives an injection $E^0\otimes_A k\hookrightarrow \Omega$. Since 
$\Omega|_K$ is an injective envelope of $\kappa$, we get that $\kappa\cap  (E^0\otimes_A k)  \neq 0$. Since $\kappa$ is irreducible, it is contained in $E^0\otimes_A k$.
\end{proof}

\section{Admissible completions}\label{admissiblecompletions}
We briefly recall the theory of types for $\GL_2(F)$. Traditionally the 
smooth representations of $G$ are considered over the field of complex 
numbers, however, since the theory is algebraic in nature, we may consider it over any algebraically closed field of characteristic $0$. In the following 
we take the coefficients to be $\overline{L}$, the algebraic closure of $L$.
 Let $\mathfrak R$ be the category of smooth representations of $G$ on $\overline{L}$-vector spaces, 
then $\RR$ decomposes into a product of 
subcategories
$$\RR\cong \prod_{\mathfrak s\in \mathfrak B} \RR_{\mathfrak s},$$
where $\mathfrak B$  is the set of inertial equivalence classes of supercuspidal representations of the Levy subgroups of $G$, see \cite{ber}, \cite{bk}. Following
\cite[Def A.1.4.1]{henniart} we say that an irreducible smooth $\overline{L}$-representation $\tau$ is \textit{typical} for the Bernstein component $\RRs$, if for every 
irreducible object $\pi$ in $\RR$, $\Hom_K(\tau, \pi)\neq 0$ implies that $\pi$ lies in $\RRs$. We say that $\tau$ is a \textit{type} for $\RRs$ if it is typical and 
$\Hom_K(\tau, \pi)\neq 0$ for very irreducible object $\pi$ in $\RRs$. Given $\RRs$, there exists a type $\tau$, unique up to isomorphism, except when 
$\RRs$ contains $\chi\circ \det$. In this case, there are two typical representations $\theta\circ \det$ and $\St\otimes \theta\circ \det$, where
$\theta:=\chi|_{\oF^{\times}}$ and $\St$ is the lift to $K$ of the Steinberg representations of $\GL_2(\Fq)$, see \cite{henniart}. 
It follows from \cite{henniart} that the definition of a type here coincides with the one given in \cite[Def 4.1]{bk}. In particular, 
if $\tau$ is a type for $\RRs$ and $\pi$ is a smooth representation of $G$, with a $K$-invariant subspace $W$ isomorphic to $\tau$ then the 
subspace $\langle G\centerdot W\rangle$ of $\pi$, is an object of $\RRs$.          

Let $\Sigma$ be the set of $\Qp$-linear embeddings of fields $F\hookrightarrow L$. We choose $L$ to be 'large', 
so that $[F:\Qp]=|\Sigma|$. When needed we
will replace $L$ with some finite extension.

For us a $\Qp$-rational representation of 
$G$, is a representation $W$ of the form
$$\bigotimes_{\sigma\in \Sigma} (\Sym^{r_{\sigma}} L^2 \otimes \dt^{a_{\sigma}})^{\sigma},$$
where $r_{\sigma}$, $a_{\sigma}$ are integers, $r_{\sigma}\ge 0$, and an element $\begin{pmatrix} a & b\\ c & d\end{pmatrix}$ in $G$ acts on the $\sigma$-component  via 
$\begin{pmatrix} \sigma(a) & \sigma(b)\\ \sigma(c) & \sigma(d)\end{pmatrix}$, see \cite[\S2]{bsch} for a proper setting.  
Such $W$ are absolutely irreducible and remain absolutely irreducible when restricted to 
an open subgroup of $G$, (this is used implicitly in \cite[Lem 2.1]{bsch}, one may argue by Zariski density as in \cite{bsch}).  
The locally $\Qp$-rational representations in the title refer 
to the representations of the form $\pi\otimes_L W$, where $\pi$ is a smooth representation of $G$ on an $L$-vector space and $W$ is a 
$\Qp$-rational representation as above. If $\pi$ is absolutely irreducible then $\pi\otimes W$ is also absolutely irreducible, (once one knows 
that the restriction of $W$ to an open subgroup of $G$ remains absolutely irreducible, the argument in \cite{prasad} goes through).

\begin{lem}\label{uvw} Let $\GG$ be a group and $U$, $V$, $W$ representations of $\GG$ on $L$-vector spaces then
$$\Hom_{\GG}(U\otimes_L V, W)\cong \Hom_{\GG}(U, \Hom_L(V, W)),$$
where we consider $\Hom_L(V,W)$ as a representation of $\GG$, via $[g\centerdot\phi](v)= g(\phi(g^{-1} v))$.
\end{lem}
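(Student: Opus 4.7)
The plan is to reduce the statement to the standard $L$-linear tensor-hom adjunction plus a direct check that the two $\GG$-equivariance conditions match under the adjunction isomorphism.

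First I would recall the underlying $L$-linear adjunction: there is a natural isomorphism
\[
\Phi: \Hom_L(U \otimes_L V, W) \xrightarrow{\sim} \Hom_L(U, \Hom_L(V, W))
\]
sending $\phi$ to the map $\tilde\phi$ defined by $\tilde\phi(u)(v) := \phi(u \otimes v)$, with inverse sending $\psi$ to $u \otimes v \mapsto \psi(u)(v)$. This is purely formal and needs no proof.

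Next I would verify that $\Phi$ restricts to a bijection on $\GG$-equivariant maps. The $\GG$-equivariance of $\phi$ reads $\phi(gu \otimes gv) = g\,\phi(u \otimes v)$ for all $g \in \GG$, $u \in U$, $v \in V$. The $\GG$-equivariance of $\tilde\phi$, with the prescribed action on $\Hom_L(V,W)$, reads
\[
\tilde\phi(gu)(v) = (g\centerdot \tilde\phi(u))(v) = g\bigl(\tilde\phi(u)(g^{-1}v)\bigr)
\]
for all $g, u, v$. Unwinding both sides through the definition of $\tilde\phi$, this says $\phi(gu \otimes v) = g\,\phi(u \otimes g^{-1}v)$, and substituting $v \mapsto gv$ gives the equivariance condition for $\phi$. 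Running the substitution the other way shows the two conditions are equivalent, so $\Phi$ and $\Phi^{-1}$ both preserve $\GG$-equivariance.

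There is no real obstacle here; the only thing to be careful about is the bookkeeping with the $g^{-1}$ in the action on $\Hom_L(V,W)$, which is precisely what makes the equivalence of the two conditions work out. The naturality of $\Phi$ in $L$-linear maps is what ensures that restricting to the $\GG$-equivariant subspaces yields the desired isomorphism $\Hom_\GG(U \otimes_L V, W) \cong \Hom_\GG(U, \Hom_L(V, W))$.
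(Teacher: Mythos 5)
Your proof is correct and follows essentially the same route as the paper: start from the standard $L$-linear tensor-hom adjunction $\phi \mapsto \tilde\phi$ and check directly that $\GG$-equivariance of $\phi$ and of $\tilde\phi$ are equivalent by unwinding the definitions and substituting $v \mapsto gv$. The bookkeeping with $g^{-1}$ that you flag is exactly the computation the paper carries out.
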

\begin{proof} We have an isomorphism of $L$-vector spaces $\Hom_L(U\otimes_L V, W)\cong \Hom_L(U, \Hom_L(V, W))$, $\phi\mapsto \phi'$, where 
$[\phi'(u)](v)= \phi(u\otimes v)$. Suppose that $\phi$ is $\GG$-equivariant then $[\phi'(gu)](v)=\phi(gu\otimes v)= g \phi(u\otimes g^{-1}v)= 
[g\centerdot\phi(u)](v)$. Hence, $\phi'$ is $\GG$-equivariant. Suppose that $\phi'$ is $\GG$-equivariant then $\phi(gu\otimes gv)=
[\phi'(gu)](gv)=[g\centerdot \phi'(u)](gv)= g \phi(u\otimes v)$.
\end{proof}

\begin{thm}\label{uff} Let $(E,\|\centerdot\|)$ be a unitary $L$-Banach space representation of $G$, such that $\|E\|\subseteq |L|$, $\pif$ acts trivially 
and the restriction 
of $E^0\otimes_A k$ to $K$ is an admissible injective
object in $\Rep_k(G)$. Let $\sigma$ be an irreducible $k$-representation of $K$, such that 
$\Hom_K(\sigma, E^0\otimes_A k)\neq 0$. Let $W$ be an irreducible 
$\Qp$-rational representation of $G$ tensored with a continuous character $\eta: G\rightarrow L^{\times}$. Let $\tau$ be an absolutely irreducible 
smooth representation of $K$, such that $\tau\otimes_L \overline{L}$ is typical for a Bernstein component $\RRs$. Choose a $K$-invariant lattice
$M$ in $\tau\otimes_L W$ and suppose that $\sigma$ occurs as a subquotient of $M\otimes_A k$. Then there exists a finite extension $L'$ of $L$, a smooth 
absolutely irreducible representation $\pi$ on an $L'$-vector space, such that $\pi_{\overline{L}}$ lies in $\RRs$, and a $G$-equivariant 
embedding 
$$\pi\otimes_{L'}W_ {L'}\hookrightarrow E_{L'}.$$
\end{thm}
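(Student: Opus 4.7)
The plan is to follow the blueprint in the introduction: dualise the data on $E^0\otimes_A k$ and $\sigma$ to produce a nonzero $G$-map out of a compact induction, then use admissibility of $E$ together with typicality of $\tau$ to cut the image down to an absolutely irreducible piece after a finite base change.

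Set $\GG_0 := \KK_0/\pif^{\ZZ}$. Since $\pif$ acts trivially on $E$ and the restriction of $E^0\otimes_A k$ to $K$ is an admissible injective object, Proposition \ref{dproj} identifies $(E^0)^d$ with a finitely generated projective $A[[\GG_0]]$-module, which by Proposition \ref{good} decomposes as a finite direct sum of projective envelopes $P_{S^*}$. The hypothesis $\Hom_K(\sigma, E^0\otimes_A k)\neq 0$ means that $P_{\sigma^*}$ appears as a direct summand of $(E^0)^d$. Since $\pif$ acts by a scalar on $W$, I extend $\tau$ uniquely to a representation $\tilde\tau$ of $\KK_0=KZ$ so that $\pif$ acts trivially on $\tilde\tau\otimes W$; then $\tilde\tau\otimes W$ is a finite dimensional $L[[\GG_0]]$-module whose natural $A$-lattice has the same reduction $K$-representation as $M\otimes_A k$, so $\sigma$ still appears in it. Corollary \ref{dim} therefore gives
$$\dim_L \Hom_{L[[\GG_0]]}\bigl(P_{\sigma^*}\otimes_A L,\,(\tilde\tau\otimes W)^*\bigr) \neq 0,$$
and via the anti-equivalence of Theorem \ref{duality} together with the decomposition of $(E^0)^d$ this translates to $\Hom_{\KK_0}(\tilde\tau\otimes W, E)\neq 0$. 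Frobenius reciprocity combined with Lemma \ref{uvw} then yields a nonzero $G$-equivariant map
$$\psi : \cInd_{\KK_0}^G \tilde\tau \longrightarrow \iHom_L(W, E).$$

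The key step is to produce a smooth irreducible $G$-subspace of $\Image\psi$. Since $\cInd_{\KK_0}^G\tilde\tau$ is finitely generated as a smooth $G$-representation, so is its image, and because $\tau$ is typical for $\RRs$ every irreducible smooth $G$-subquotient of $\Image\psi$ (after base change to $\overline L$) contains $\tau$ on $K$ and therefore lies in $\RRs$. Admissibility of $E$ makes $\iHom_L(W,E)$ into an admissible $L$-Banach space representation (since $W$ is finite dimensional), so by Lemma \ref{dcc} the closure of $\Image\psi$ satisfies DCC on closed $G$-invariant subspaces. Combining this DCC with the Bushnell--Kutzko description of $\RRs$ via the finitely generated Hecke algebra of the type $\tau$, I extract an irreducible smooth $G$-subrepresentation $\pi_0\hookrightarrow \Image\psi$. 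After replacing $L$ by a finite extension $L'$, the results of \cite{vig2} produce an absolutely irreducible $\pi$ appearing as a $G$-subspace of $(\pi_0)_{L'}$; by construction $\pi_{\overline L}\in \RRs$.

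Finally, adjunction (Lemma \ref{uvw}) turns $\pi\hookrightarrow \iHom_{L'}(W_{L'}, E_{L'})$ into a nonzero $G$-equivariant map $\pi\otimes_{L'} W_{L'} \to E_{L'}$. Since $W_{L'}$ remains absolutely irreducible upon restriction to any open subgroup of $G$, the tensor product $\pi\otimes_{L'} W_{L'}$ is absolutely irreducible (as recalled just before the theorem), so the map must be injective, giving the required embedding.

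The main obstacle is the second paragraph: extracting the irreducible smooth $G$-subobject inside $\Image\psi$. The delicate point is transferring the Banach-space DCC supplied by Lemma \ref{dcc} to the smooth setting, and it is here that the typicality of $\tau$ for $\RRs$ plays an essential role by forcing all irreducible smooth subquotients of the image into a single Bernstein block governed by a finitely generated Hecke algebra.
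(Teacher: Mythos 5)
The dualisation, Frobenius reciprocity, and the final tensor–injectivity argument all match the paper, but your middle step — producing an irreducible smooth $G$-subrepresentation inside $\Image\psi$ — is where the proposal goes wrong, and you essentially concede this at the end. There are three concrete problems.

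First, the claim that ``because $\tau$ is typical for $\RRs$ every irreducible smooth $G$-subquotient of $\Image\psi$ (after base change to $\overline L$) contains $\tau$ on $K$'' is false: a subquotient of a representation generated by $\tau$ certainly need not contain $\tau$. What is actually used is the Bushnell--Kutzko statement (recalled just before the theorem) that if $\tau$ is a \emph{type}, then the $G$-subrepresentation generated by a copy of $\tau$ lies in $\RRs$, together with the fact that $\RRs$ is a full abelian subcategory closed under subquotients. That argument requires $\tau$ to be a type, not merely typical, and the paper handles the discrepancy (when $\RRs$ contains $\chi\circ\det$ and $\tau$ is one of the two typical non-type representations) by twisting to the trivial $I$-type. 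Your proposal ignores this case entirely.

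Second, Lemma \ref{dcc} gives a descending chain condition on \emph{closed} $G$-invariant subspaces of an admissible Banach space. You want an irreducible \emph{smooth} subobject of $\Image\psi$; these are not closed subspaces of $E$, and the DCC on closures does not directly yield an irreducible smooth subrepresentation (distinct smooth subreps can have the same closure, and the intersection of $\Image\psi$ with a minimal closed subspace need not be irreducible or even nonzero). The paper instead proves that the smooth representation $\Hom^{sm}_L(W,E)$ is \emph{admissible} as a smooth $L[G]$-module (because its $J$-invariants equal $\Hom_J(W,E)$, which are finite-dimensional by Proposition \ref{dproj} and Corollary \ref{dim}), and then cites \cite[5.10]{vig2} to conclude that the finitely generated $\pi_1 = \Image\psi$ has finite length, hence an irreducible smooth submodule. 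This smooth-admissibility step is precisely the bridge you identify as the ``delicate point'' but do not supply.

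Third, the appeal to ``the Bushnell--Kutzko description of $\RRs$ via the finitely generated Hecke algebra'' is a gesture at an argument rather than an argument: Hecke-algebra finiteness is not invoked in the paper and does not, by itself, give finite length inside $\Hom^{sm}_L(W,E)$ without the admissibility input. In short, the first paragraph and the last are sound and parallel to the paper; the second paragraph has a genuine gap that the paper's admissibility-plus-Vign\'eras argument fills.
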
   
\begin{proof}
We note that  any $L$-Banach space topology on a finite dimensional $L$-vector space $V$ coincides with the finest locally convex one, \cite[Prop. 4.13]{nfa}. 
Hence any $L$-linear map from $V$  into any locally convex 
$L$-vector space is continuous, \cite[\S 5 C]{nfa}. Hence, we have
$$\Hom_K(\tau\otimes_L W, E)\cong \Hom_{L[[K]]}(E^d,(\tau\otimes_L W)^d).$$
It follows from Proposition \ref{dproj} and Corollary \ref{dim} that $\Hom_K(\tau\otimes W, E)$ is finite dimensional. Moreover, since $\sigma$ 
is a subquotient of $M\otimes_A k$,  Proposition \ref{dproj} and Corollary \ref{dim} imply that $\Hom_K(\tau\otimes_L W, E)$ is non-zero.
Since $\pif$ acts by a scalar on $W$, there exists a unique extension of $\tau$ to a representation $\tilde{\tau}$ of $KZ$ such that 
$\pif$ acts trivially on $\tilde{\tau}\otimes_L W$. Since $\pif$ act trivially on $E$ we have 
\begin{equation}\label{longiso}
\begin{split}
 \Hom_{K}(\tau\otimes_L W, E)&\cong \Hom_{KZ}(\tilde{\tau}\otimes_L W, E)\cong\Hom_{KZ}(\tilde{\tau}, \Hom_L(W, E))\\
&\cong \Hom_G(\cIndu{KZ}{G}{\tilde{\tau}}, \Hom_L(W, E)),
\end{split}
\end{equation}
where the second isomorphism is given by Lemma \ref{uvw}. Choose a non-zero $\phi\in \Hom_G(\cIndu{KZ}{G}{\tilde{\tau}}, \Hom_L(W, E))$ and let 
$\pi_1$ be the image of $\phi$. Since $\pi_1$ is a smooth representation, it will be contained in $\Hom^{sm}_L(W, E)$ consisting of 
$\phi\in \Hom_L(W,E)$
such that there exists an open subgroup $J$ of $G$, with $g \phi g^{-1}=\phi$, for all $g\in J$. Let 
$J$ be an open compact subgroup of $G$, then the 
subspace of $J$-invariants in $\Hom^{sm}_L(W, E)$ is equal to $\Hom_J(W,E)$, which is finite dimensional by Proposition 
\ref{dproj} and Corollary \ref{dim}.
Hence, $\Hom^{sm}_L(W,E)$ is an admissible representation of $G$. Since $\pi_1$ is finitely generated 
\cite[5.10]{vig2} implies that $\pi_1$ is of finite 
length as $L[G]$-module. Let $\pi_2$ be an irreducible $L[G]$-submodule of $\pi_1$. 
It follows from \cite[4.4]{vig2} that there exists a finite extension 
$L'$ of $L$, such that $\pi_2\otimes_L L'$ is a direct sum of absolutely irreducible representations. Let $\pi$ be an irreducible summand. Since
$\Hom_L(W,E)\otimes_L L'\cong \Hom_{L'}(W_{L'}, E_{L'})$, we get 
$$\Hom_{G}(\pi\otimes_{L'}W_{L'}, E_{L'})\cong \Hom_{G}(\pi, \Hom_{L'}(W_{L'}, E_{L'}))\neq 0.$$ 
Since $\pi$ is irreducible and $W$ is $\Qp$-rational tensored with a character, 
the representation $\pi\otimes_{L'} W_{L'}$ is irreducible, and hence 
any non-zero homomorphism is an injection. 

It remains to show that $\pi_{\overline{L}}$ lies in $\RRs$. We know that $\pi_{\overline{L}}$ is a subobject of 
$\pi_1\otimes_L \overline{L}$, which 
is generated as a $G$-representation by a subspace isomorphic to $\tau_{\overline{L}}$. If $\tau$ is a type, 
then we are done. If $\tau$ is not a type, then 
$\tau\cong \chi\circ \det$ or $\tau\cong \St \otimes \chi\circ \det$, for some smooth character $\chi: \oF^{\times}\rightarrow L^{\times}$, where $\St$
denotes a lift to $K$ of the Steinberg representation of $K/K_1\cong \GL_2(\mathbb F_q)$. By twisting we may assume $\chi$ to be trivial. 
Then the trivial 
representation of $I$ is a type for $\RRs$. Since $\Hom_I(\Eins, \tau)\neq 0$, we get that  
$\pi_1\otimes_L \overline{L}$ is generated by a subspace 
isomorphic to the trivial representation of $I$, and hence $\pi_{\overline{L}}$ lies in $\RRs$.      
\end{proof}

We also give a variant of Theorem \ref{uff} when $\tau$ is the trivial representation of $K$. 

\begin{lem}\label{latticefin} Let $V$ be an absolutely irreducible  representation of $G$ on a finite dimensional $L$-vector space. Suppose that
$G$ stabilizes a lattice in $V$ then $V$ is a character.
\end{lem}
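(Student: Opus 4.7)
The plan is to first show that $\rho\colon G\to \GL(V)$ is trivial on $\SL_2(F)$, using that the latter is generated as an abstract group by its two unipotent root subgroups $U=\bigl(\begin{smallmatrix} 1 & F \\ 0 & 1\end{smallmatrix}\bigr)$ and $U^{-}=\bigl(\begin{smallmatrix} 1 & 0 \\ F & 1\end{smallmatrix}\bigr)$, each of which is isomorphic as an abstract group to the divisible group $(F,+)$. Once this is established, $\rho$ factors through the determinant $G\to F^{\times}$, so its image is abelian, and Schur's lemma combined with absolute irreducibility will force $\dim_L V=1$.

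For the key step, I would first choose an $A$-basis of a $G$-stable lattice in $V$; this identifies $V$ with $L^n$ in such a way that $\rho(G)\subseteq \GL_n(A)$. For each $k\ge 1$ the quotient $\GL_n(A)/(1+\MM^k M_n(A))\cong \GL_n(A/\MM^k)$ is a finite group. The abelian group $(F,+)$ has characteristic zero, hence is a $\QQ$-vector space, and in particular a divisible abelian group; since a divisible abelian group has no proper subgroup of finite index, every homomorphism from $(F,+)$ into a finite group is trivial. Applied to the composition of $\rho|_U$ with reduction modulo $\MM^k$, this yields $\rho(U)\subseteq 1+\MM^k M_n(A)$ for every $k\ge 1$. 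Since $A$ is $\MM$-adically separated, $\bigcap_k(1+\MM^k M_n(A))=\{1\}$, so $\rho|_U$ is trivial. The same argument applies to $U^{-}$, and since $U$ and $U^{-}$ generate $\SL_2(F)$, we conclude $\rho|_{\SL_2(F)}=1$.

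To finish, $\rho$ factors through $G/\SL_2(F)\cong F^{\times}$, hence $\rho(G)$ is abelian. Since $V$ is absolutely irreducible, $\End_{L[G]}(V)=L$, so every element of $\rho(G)$, commuting with the whole image, acts as a scalar on $V$; together with irreducibility this forces $\dim_L V=1$, i.e.\ $V$ is a character. No serious obstacle is expected; the essential point is the divisibility of $(F,+)$ as an \emph{abstract} group, which is what allows one to bypass any continuity hypothesis on $\rho$ and reduce the analysis on unipotent subgroups to the finite quotients $\GL_n(A/\MM^k)$.
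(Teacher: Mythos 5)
Your proof is correct and follows the same overall strategy as the paper: reduce modulo $\MM^k$ to show $\rho$ kills $\SL_2(F)$, then apply Schur's lemma. The only difference is cosmetic: the paper invokes a theorem from Lang's \emph{Algebra} (Thm.~8.4) for the fact that $\SL_2(F)$ has no nontrivial finite quotient, whereas you re-derive this input directly from the divisibility of the unipotent root subgroups $U\cong U^{-}\cong (F,+)$ together with the standard fact that they generate $\SL_2(F)$, making the argument self-contained.
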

\begin{proof} If $G$ stabilizes a lattice in $V$, then we obtain a group homomorphism $\rho:G\rightarrow \GL_d(A)$, where $d=\dim_L V$.
It follows from \cite[Thm. 8.4]{lang} that $\SL_2(F)$ does not have a non-trivial quotient of finite order. 
Hence, $\rho(\SL_2(F))$ is contained in $1+\varpi_L^n M_d(A)$, for 
all $n\ge 1$. Since the intersection of these groups is trivial, $\rho(\SL_2(F))=1$. Hence, $\rho(G)$ is abelian and so for $g\in G$, the map 
$v\mapsto gv$ lies in $\End_G(V)$. Since $V$ is finite dimensional and absolutely irreducible, Schur's lemma gives 
 $\End_G(V)=L$, and hence $G$ acts by a character.
\end{proof}

\begin{cor}\label{principalseries} Let $E$, $\sigma$, $W$ be as in Theorem \ref{uff}. Let $M$ be a $K$-stable lattice in $W$ and suppose that 
$\sigma$ is a subquotient of 
$M\otimes_A k$. Assume that $W$ is not one dimensional then there exists a finite extension $L'$ of $L$ and 
an unramified principal series representation 
$\pi$ of $G$ on an $L'$-vector space and a $G$-equivariant injection $\pi\otimes_{L'} W_{L'}\hookrightarrow E_{L'}$.
\end{cor}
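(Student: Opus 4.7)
The plan is to run the proof of Theorem \ref{uff} with $\tau$ the trivial $L$-representation of $K$, and then to cut out a principal series inside the resulting image using the spherical Hecke algebra.

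Let $\tilde 1$ denote the extension of the trivial $K$-character to $KZ$ chosen so that $\pif$ acts trivially on $\tilde 1\otimes_L W$. As in Theorem \ref{uff}, the hypothesis that $\sigma$ is a subquotient of $M\otimes_A k$, combined with Proposition \ref{dproj} and Corollary \ref{dim}, yields a non-zero $G$-equivariant map $\phi\colon\cInd_{KZ}^G\tilde 1\to\Hom^{sm}_L(W,E)$, whose image $\pi_1$ is admissible and of finite length by \cite[5.10]{vig2}. The spherical Hecke algebra $\End_G(\cInd_{KZ}^G\tilde 1)\cong L[T]$ acts on the finite-dimensional space $\pi_1^{KZ}$, so after enlarging $L$ to a finite extension $L'$ one picks a $T$-eigenvector $v\in(\pi_1\otimes_L L')^{KZ}$ with eigenvalue $\lambda\in L'$. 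Set $V_\lambda:=(\cInd_{KZ}^G\tilde 1/(T-\lambda))\otimes_L L'$; then $V_\lambda\cong\Ind_B^G(\chi_1\otimes\chi_2)$ for suitable unramified $\chi_1,\chi_2$ by the Satake isomorphism, and in the reducible case the fact that $V_\lambda$ is generated by its $1$-dimensional $KZ$-fixed vector forces $V_\lambda$ to be the non-split extension $0\to\St\otimes\chi\to V_\lambda\to\chi'\circ\det\to 0$ for certain unramified characters $\chi,\chi'$ (in the opposite extension the $KZ$-fixed vector would lie in the character subrepresentation and could not generate).

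Next I would show that $\pi_3:=L'[G]\cdot v$ equals $V_\lambda$. Otherwise $\pi_3$ is a non-trivial proper quotient of $V_\lambda$, which in the reducible case is forced to be $\chi'\circ\det$. By adjunction this yields a non-zero, hence injective, embedding $(\chi'\circ\det)\otimes W_{L'}\hookrightarrow E_{L'}$ of a finite-dimensional absolutely irreducible representation; intersecting with $E^0_{L'}$ produces a $G$-invariant lattice, and Lemma \ref{latticefin} then forces this representation to be one-dimensional, contradicting $\dim W>1$. Hence $V_\lambda\hookrightarrow\Hom^{sm}_L(W_{L'},E_{L'})$, and the adjoint map $\Psi\colon V_\lambda\otimes W_{L'}\to E_{L'}$ is non-zero since any non-zero $f\in V_\lambda$ has $f(w)\ne 0$ for some $w$. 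In the irreducible case $V_\lambda\otimes W_{L'}$ is absolutely irreducible by the Prasad-type argument invoked in the paper, so $\Psi$ is injective. In the reducible case $V_\lambda\otimes W_{L'}$ is indecomposable with socle $\St\otimes\chi\otimes W_{L'}$; the restriction of $\Psi$ to this socle is the adjoint of the non-zero embedding $\St\otimes\chi\hookrightarrow\Hom^{sm}_L(W_{L'},E_{L'})$ of an irreducible representation, hence injective, so $\ker\Psi$ cannot meet the socle and must vanish. Setting $\pi:=V_\lambda$ then completes the proof.

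The main technical subtlety is identifying the structure of $V_\lambda$ in the reducible case (as the non-split extension with $\St$ as subobject, not as quotient) and establishing the indecomposability of $V_\lambda\otimes W_{L'}$; the latter relies on Prasad-type irreducibility of each Jordan--H\"older factor of $V_\lambda\otimes W_{L'}$ together with the non-splitness of $V_\lambda$ itself.
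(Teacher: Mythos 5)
Your proposal takes essentially the same route as the paper: run Theorem \ref{uff} with $\tau=\Eins$, pass to a Hecke eigenvalue $\lambda$ over a finite extension $L'$, identify $V_\lambda:=\cIndu{KZ}{G}{\tilde{\Eins}_{L'}}/(T-\lambda)$ as an unramified principal series, and then use Lemma \ref{latticefin} together with Prasad-type irreducibility to obtain the injection $V_\lambda\otimes_{L'}W_{L'}\hookrightarrow E_{L'}$. The organization of the reducible case differs slightly. The paper observes that $\pi_1\cong\cIndu{KZ}{G}{\tilde{\Eins}}/(P)$ (using that $\cIndu{KZ}{G}{\tilde{\Eins}}$ is $\HH$-flat), deduces at once that $\pi=\cIndu{KZ}{G}{\tilde{\Eins}_{L'}}/(T-a)$ is a \emph{subobject} of $\pi_1\otimes_L L'$, and then shows that \emph{any} non-zero $\psi\in\Hom_G(\pi\otimes W_{L'},E_{L'})$ with $\Ker\psi\neq 0$ has image $\cong W_{L'}\chi$, which contradicts Lemma \ref{latticefin}. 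You instead insert the intermediate step $L'[G]\cdot v=V_\lambda$ (ruling out the proper quotient $\chi'\circ\det$ by a separate appeal to Lemma \ref{latticefin}) to establish the inclusion $V_\lambda\hookrightarrow\Hom^{sm}_{L'}(W_{L'},E_{L'})$ in a more hands-on way, and then deduce injectivity of the adjoint $\Psi$ via a socle computation. Both are correct; yours is a little longer and uses Lemma \ref{latticefin} twice.

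The one point you flag but do not actually establish is the non-splitness (equivalently, indecomposability) of $V_\lambda\otimes W_{L'}$, which is exactly what makes $\St\otimes\chi\otimes W_{L'}$ the socle and so makes your socle argument close. This does \emph{not} follow merely from Prasad-irreducibility of the two Jordan--H\"older factors together with non-splitness of $V_\lambda$; one needs the additional observation, which the paper supplies, that a splitting of $0\to\St\otimes\chi\otimes W_{L'}\to V_\lambda\otimes W_{L'}\to(\chi'\circ\det)\otimes W_{L'}\to 0$ would, upon tensoring with $W_{L'}^*$ and passing to smooth vectors, produce a splitting of $V_\lambda$ itself. With that argument spelled out, your proposal is complete and matches the paper's proof in substance.
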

\begin{proof} We proceed as in the proof of Theorem \ref{uff} with $\tau=\Eins$. Let $\tilde{\Eins}$ denote an unramified character such that
$\pif$ acts trivially on $\tilde{\Eins}\otimes W$. The equation \eqref{longiso} implies that  
$N:=\Hom_G(\cIndu{KZ}{G}{\tilde{\Eins}}, \Hom_L(W, E))$
is a non-zero finite dimensional vector space. It is also naturally a module for the Hecke algebra $\HH:=\End_G(\cIndu{KZ}{G}{\tilde{\Eins}})$. 
The Hecke algebra $\HH$ is isomorphic to $L[T]$ the polynomial ring in one variable. So if we choose some non-zero 
$\phi\in N$ then the $\HH$-module generated by $\phi$ is isomorphic to $L[T]/(P)$, for some
polynomial $P$. Hence if we let $\pi_1$ be the image of $\phi$, then $\pi_1$ is isomorphic to 
$$\cIndu{KZ}{G}{\tilde{\Eins}}\otimes_{L[T]} L[T]/(P)\cong \cIndu{KZ}{G}{\tilde{\Eins}}/(P).$$
Let $L'$ be the splitting field of $P$, and $a$ a root of $P$. If we set 
$\pi:= \frac{\cIndu{KZ}{G}{\tilde{\Eins}_{L'}}}{(T-a)}$, then $\pi$ will be isomorphic 
to a subobject of $\pi_1\otimes_L L'$, so we have 
$$\Hom_G(\pi\otimes_{L'} W_{L'}, E_{L'})\cong \Hom_{G}(\pi, \Hom_{L'}(W_{L'}, E_{L'}))\neq 0.$$ 
Now $\pi$ is an unramified  principal series representation. If $\pi$ is irreducible we are done. 
If $\pi$ is reducible then 
it is a non-split extension 
$0\rightarrow \St\otimes \chi\rightarrow \pi\rightarrow \chi\rightarrow 0$, for some character $\chi:G\rightarrow L^{\times}$, and 
$\St$ denotes the Steinberg representation of $G$. The sequence  
$0\rightarrow \St\otimes W\chi\rightarrow \pi\otimes W\rightarrow W\chi\rightarrow 0$
is also non-split, otherwise by tensoring with $W^*$ and 
taking smooth vectors we would obtain the splitting of the original sequence. So if there exists 
 non-zero $\psi\in \Hom_G(\pi\otimes_{L'} W_{L'}, E_{L'})$, such that $\Ker \psi\neq 0$ then the image of $\psi$ is isomorphic to $W\chi$. But then $W\chi\cap E^0$
would be a $G$-invariant lattice in $W\chi$, which would contradict Lemma \ref{latticefin}.
\end{proof} 

Since all the $L$-Banach spaces below arise from the constructions of 
Theorem \ref{uff} and Corollary \ref{principalseries} we always assume that $\|E\|\subseteq |L|$.  

\begin{cor}\label{cor1} Let $\tau$ be a smooth absolutely irreducible $L$-representation of $K$, 
which is typical for Bernstein component  $\RRs$. Let  $W$ be a 
$\Qp$-rational representation of $G$, twisted by a continuous character. Let $M$ be a $K$-invariant lattice in $\tau\otimes W$. 
Let $\kappa$ be an absolutely irreducible smooth admissible 
$k$-representation of $G$, such that $\pif$ acts trivially. Suppose 
that there exists an  irreducible 
$k$-representation $\sigma$ of $K$, such that 
\begin{itemize}
\item[(1)] $\Hom_K(\sigma, \kappa)\neq 0$;
\item[(2)] $\sigma$ occurs as a subquotient of $M\otimes_A k$.
\end{itemize}
Then there exists a finite extension $L'$ of $L$, 
an absolutely irreducible smooth $L'$-representation $\pi$ of $G$ in $\RRs$, and an admissible unitary 
$L'$-Banach space representation $(E,\|\centerdot\|)$ of $G$, such that the following hold:
\begin{itemize}
\item[(i)] $\pi\otimes_{L'} W_{L'}$ is a dense $G$-invariant subspace of $E$;
\item[(ii)] $\Hom_G(\kappa_{k'}, E^0\otimes_{A'} k')\neq 0$.
\end{itemize} 
\end{cor}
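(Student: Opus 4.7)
The plan is to combine the lifting procedure of \S\ref{sectionliftomega} with the existence theorem of \S\ref{admissiblecompletions}, and then cut out a closed subspace with the right socle.

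\textbf{Step 1 (Construct $\Omega$).} First I would invoke \cite{bp} to produce a smooth admissible $k$-representation $\Omega$ of $G$ together with an embedding $\kappa\hookrightarrow \Omega$ such that $\pif$ acts trivially on $\Omega$, $\Omega|_K$ is an admissible injective object in $\Rep_k(K)$ containing $\sigma$ in its $K$-socle, and $\soc_G\Omega\cong \kappa$ with $\kappa\hookrightarrow \Omega$ essential as a $G$-representation. (Essentiality as a $G$-representation is forced by $K$-essentiality together with irreducibility of $\kappa$: for any nonzero $G$-stable $V\subseteq \Omega$, the intersection $V\cap \kappa$ is $G$-stable and, by the injective envelope property of $\Omega|_K$, nonzero, hence equal to $\kappa$.) Since $\Hom_K(\sigma,\kappa)\neq 0$ and $\sigma$ is absolutely irreducible, the injective envelope $I_\sigma$ is a direct summand of $\Omega|_K$.

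\textbf{Step 2 (Lift to a Banach space).} Apply Theorem~\ref{liftomega} to $\Omega$ to obtain an admissible unitary $L$-Banach space representation $(B,\|\centerdot\|)$ of $G$ with $\pif$ acting trivially and a $G$-equivariant isomorphism $B^0\otimes_A k\cong \Omega$. In particular $B^0\otimes_A k$ restricted to $K$ is admissible injective, so $B$ satisfies the hypotheses of Theorem~\ref{uff}.

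\textbf{Step 3 (Extract $\pi$).} Apply Theorem~\ref{uff} to $B$, with the given $\tau$, $W$, $\sigma$, $M$. This produces a finite extension $L'/L$, an absolutely irreducible smooth $L'$-representation $\pi$ of $G$ with $\pi_{\overline{L}}\in \RRs$, and a $G$-equivariant embedding
\[
\pi\otimes_{L'} W_{L'}\hookrightarrow B_{L'}.
\]

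\textbf{Step 4 (Take closure and check properties).} Let $E\subseteq B_{L'}$ be the closure of $\pi\otimes_{L'}W_{L'}$ inside $B_{L'}$, equipped with the restricted norm. Since $B_{L'}$ is admissible and $\Ban^{adm}_G(L')$ is closed under subobjects (Theorem~\ref{duality}), $E$ is an admissible unitary $L'$-Banach space representation of $G$. Property (i) holds by construction. For (ii), apply Lemma~\ref{redimq} to the inclusion $E\hookrightarrow B_{L'}$ to obtain a $G$-equivariant injection
\[
E^0\otimes_{A'}k'\hookrightarrow B_{L'}^0\otimes_{A'}k'\cong \Omega\otimes_k k'.
\]
By Lemma~\ref{rat0} the right hand side is still a $K$-injective envelope of its socle, so by the same essentiality argument as in Step~1, $\kappa_{k'}\hookrightarrow \Omega_{k'}$ is essential as a $G$-representation (using absolute irreducibility of $\kappa$ to keep $\kappa_{k'}$ irreducible). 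Since $E\neq 0$ forces $E^0\otimes_{A'}k'\neq 0$, this nonzero $G$-stable subspace of $\Omega_{k'}$ must contain $\kappa_{k'}$, giving a nonzero map $\kappa_{k'}\hookrightarrow E^0\otimes_{A'}k'$ as required.

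\textbf{Main obstacle.} The routine pieces are Steps 1--3, which are direct invocations of earlier results. The delicate point is Step 4: ensuring that the essentiality of $\kappa\subseteq\Omega$ as a $G$-representation survives extension of scalars to $k'$. This is where the assumption that $\kappa$ (and the $K$-socle constituents $\sigma$) are absolutely irreducible is crucial, so that Lemma~\ref{ratsoc} and Lemma~\ref{rat0} give $\soc_G\Omega_{k'}\cong \kappa_{k'}$ and the $K$-injective envelope structure survives base change, letting the $G$-essentiality argument be repeated over $k'$.
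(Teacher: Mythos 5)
Your proposal is correct and follows essentially the same route as the paper's proof: construct $\Omega$ via \cite[Cor.\ 9.11]{bp}, lift to a Banach space $B$ by Theorem \ref{liftomega}, extract $\pi\otimes W$ inside $B_{L'}$ via Theorem \ref{uff}, take the closure $E$, and use Lemma \ref{redimq} together with the base-change behaviour of injective envelopes and socles (Lemmas \ref{rat0}, \ref{ratsoc}) to recover $\kappa_{k'}$ inside $E^0\otimes_{A'}k'$. One small imprecision in your closing remarks: Lemma \ref{ratsoc} is stated for profinite groups and gives $\soc_K\Omega_{k'}\cong(\soc_K\kappa)_{k'}$ directly; the identification $\soc_G\Omega_{k'}\cong\kappa_{k'}$ is then a consequence of the $G$-essentiality argument you run, not an immediate output of that lemma.
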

\begin{proof} Since $\kappa$ is admissible, by \cite[Cor.9.11]{bp} 
there exists a $G$-equivariant embedding $\kappa\hookrightarrow \Omega$, where 
$\Omega$ is a smooth representation of $G$, such that $\Omega|_K$ is an injective envelope of 
$\soc_K \kappa$ in $\Rep_k (K)$. Let $(E', \|\centerdot\|)$ 
be a lift of $\Omega$ as in Theorem \ref{liftomega}. Then by 
Theorem \ref{uff} there exists a finite extension $L'$ of $L$ and an absolutely irreducible 
smooth $L'$-representation $\pi$ in $\RRs$, and 
a $G$-equivariant embedding $\pi\otimes_{L'}W_{L'}\hookrightarrow E'_{L'}$. Let $E$ be the closure of 
$\pi\otimes_{L'} W_{L'}$ in $E'$ and $E^0$ the unit ball in $E$ with respect to $\|\centerdot\|$. Since $E'$ is admissible, so is $E$.
Lemma \ref{redimq} gives a $G$-equivariant injection
$E^0\otimes_{A'} k'\hookrightarrow \Omega_{k'}$. It follows from Lemma \ref{ratsoc} that 
$\soc_K \Omega_{k'}\cong (\soc_K \Omega) \otimes_k k'\cong (\soc_K \kappa)\otimes_k k'$, hence 
$\kappa_{k'}\cap (E^0\otimes_{A'} k')\neq 0$. Since $\kappa$ is 
absolutely irreducible we get that $\kappa_{k'}$ is contained in $E^0\otimes_{A'} k'$.    
\end{proof}

\begin{cor}\label{cor2} Let  $W$ be 
$\Qp$-rational representation of $G$, twisted by a continuous character. Let $M$ be a $K$-invariant lattice in $W$. 
Let $\kappa$ be an absolutely irreducible smooth admissible $k$-representation of $G$, 
such that $\pif$ acts trivially. Suppose that there exists an  irreducible 
$k$-representation $\sigma$ of $K$, such that 
\begin{itemize}
\item[(1)] $\Hom_K(\sigma, \kappa)\neq 0$;
\item[(2)] $\sigma$ occurs as a subquotient of $M\otimes_A k$.
\end{itemize}
Assume that either $\kappa$ is not finite dimensional or $W$ is not a character. Then there exists a finite extension $L'$ of $L$, an 
unramified  smooth principal series $L'$-representation $\pi$ of $G$, and an admissible unitary 
$L'$-Banach space representation $(E,\|\centerdot\|)$ of $G$, such that the following hold:
\begin{itemize}
\item[(i)] $\pi\otimes_{L'} W_{L'}$ is a dense $G$-equivariant subspace of $E$;
\item[(ii)] $\Hom_G(\kappa_{k'}, E^0\otimes_{A'} k')\neq 0$.
\end{itemize} 
\end{cor}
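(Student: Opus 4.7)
The plan is to mirror the proof of Corollary~\ref{cor1}, with Corollary~\ref{principalseries} playing the role of Theorem~\ref{uff}. I would begin by invoking \cite[Cor.~9.11]{bp} applied to the admissible irreducible $\kappa$ (on which $\pif$ acts trivially) to obtain a $G$-equivariant embedding $\kappa \hookrightarrow \Omega$, where $\Omega$ is a smooth $k$-representation of $G$ whose restriction to $K$ is an injective envelope of $\soc_K \kappa$ in $\Rep_k(K)$. Theorem~\ref{liftomega} then produces an admissible unitary $L$-Banach space representation $(E', \|\centerdot\|)$ of $G$ together with an isomorphism of $G$-representations $(E')^0 \otimes_A k \cong \Omega$.

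I would then run the Hecke-algebra argument from the proof of Corollary~\ref{principalseries} applied to $(E', \sigma, W)$ with $\tau = \Eins$. The hypotheses (1) and (2), together with Proposition~\ref{dproj} and Corollary~\ref{dim}, guarantee that $\Hom_K(W, E')$ is nonzero and finite-dimensional, so that $N := \Hom_G(\cIndu{KZ}{G}{\tilde{\Eins}}, \Hom_L(W, E'))$ is a nonzero finite-dimensional module over $\HH \cong L[T]$. Picking $\phi \in N$ and a root $a$ of its minimal polynomial over a suitable finite extension $L'$ yields an unramified principal series $\pi := \cIndu{KZ}{G}{\tilde{\Eins}_{L'}}/(T-a)$ and a nonzero $G$-homomorphism $\psi \colon \pi \otimes_{L'} W_{L'} \to E'_{L'}$. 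The key step is to check that $\psi$ is injective. If $\pi$ is irreducible this is automatic. If $\pi$ sits in a reducible sequence $0 \to \St \otimes \chi \to \pi \to \chi \to 0$, then any nontrivial kernel of $\psi$ forces the image to be isomorphic to the character $W\chi$. When $W$ is not a character this contradicts Lemma~\ref{latticefin} exactly as in Corollary~\ref{principalseries}; when $W$ is a character (so by hypothesis $\kappa$ is not finite-dimensional), the $G$-invariant lattice $W\chi \cap (E')^0$ would reduce modulo $\varpi_L$ to a $G$-invariant character subrepresentation of $\Omega$, but $\soc_G \Omega \cong \kappa$ is not a character since $\kappa$ is not finite-dimensional, a contradiction. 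Hence $\psi$ is injective.

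Finally I set $E$ to be the closure of $\pi \otimes_{L'} W_{L'}$ in $E'_{L'}$. Admissibility of $E'_{L'}$ descends to $E$, and (i) is immediate. For (ii), Lemma~\ref{redimq} provides a $G$-equivariant injection $E^0 \otimes_{A'} k' \hookrightarrow \Omega_{k'}$, and Lemma~\ref{ratsoc} identifies $\soc_K \Omega_{k'}$ with $(\soc_K \kappa) \otimes_k k'$. The nonzero $K$-socle of the admissible smooth $k'$-representation $E^0 \otimes_{A'} k'$ is therefore contained in $\kappa_{k'} \subseteq \Omega_{k'}$, and absolute irreducibility of $\kappa_{k'}$ forces $\kappa_{k'} \hookrightarrow E^0 \otimes_{A'} k'$. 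The real technical difficulty sits in the injectivity step of the middle paragraph: this is precisely where the disjunctive hypothesis \emph{``$W$ not a character or $\kappa$ not finite-dimensional''} enters in an essential way, via the observation that $\soc_G \Omega = \kappa$ is not one-dimensional.
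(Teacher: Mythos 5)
Your proof mirrors the paper's stated approach exactly — the paper's own proof is the one-liner ``The proof is the same as of Corollary~\ref{cor1}, using Corollary~\ref{principalseries} instead of Theorem~\ref{uff}.'' — but you have noticed and filled a genuine gap that the paper leaves silent. Corollary~\ref{principalseries} carries the hypothesis that $W$ is not one-dimensional, and its proof derives the contradiction ``$G$ stabilizes a lattice in $W\chi$'' from Lemma~\ref{latticefin}, which says nothing when $W\chi$ is a character. Corollary~\ref{cor2}, however, allows $W$ to be a character provided $\kappa$ is infinite-dimensional, and the paper's reference to Corollary~\ref{principalseries} does not literally cover this case. Your supplementary argument is the right one: if $W$ is a character and $\Ker\psi\neq 0$, the image of $\psi$ is the one-dimensional representation $W\chi$, a closed subspace of $E'_{L'}$; Lemma~\ref{redimq} turns its unit ball into a one-dimensional $G$-subrepresentation of $(E')^0\otimes_{A'}k'\cong\Omega_{k'}$; and this is impossible because any nonzero $G$-subrepresentation of $\Omega_{k'}$ has nonzero $K$-socle sitting inside $\soc_K\Omega_{k'}\cong(\soc_K\kappa)_{k'}\subseteq\kappa_{k'}$ (Lemma~\ref{ratsoc}), hence meets and therefore contains the irreducible $\kappa_{k'}$, which is infinite-dimensional by hypothesis. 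One small stylistic point: state the contradiction in terms of $\Omega_{k'}$ and $\kappa_{k'}$ rather than $\soc_G\Omega\cong\kappa$, since Lemma~\ref{ratsoc} is only proved for profinite groups and the cleanest route to the $G$-socle over $k'$ goes through the $K$-socle as above; the underlying idea is the same. The remainder of your argument (lifting $\Omega$ via Theorem~\ref{liftomega}, the Hecke-algebra step from Corollary~\ref{principalseries}, taking $E$ to be the closure of $\pi\otimes_{L'}W_{L'}$, and deducing (ii) from Lemmas~\ref{redimq} and~\ref{ratsoc}) is exactly the paper's proof of Corollary~\ref{cor1} and is correct as written.
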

\begin{proof} The proof is the same as of Corollary \ref{cor1}, using Corollary \ref{principalseries} instead of Theorem \ref{uff}.
\end{proof}

\begin{remar}\label{smoothtoalgebraic} We note that any irreducible smooth $k$-representation $\sigma$ of $K$ is of the form  
$$\bigotimes_{\tau:k_F\hookrightarrow k} (\Sym^{r_{\tau}} k^2 \otimes \dt^{a_{\tau}})^{\tau}$$
with $0\le r_{\tau}\le p-1$ and $0\le a_{\tau}<p-1$. Hence, we may lift $\sigma$ to a $\Qp$-rational representation 
$$W:=\bigotimes_{\tilde{\tau}:F\hookrightarrow L} (\Sym^{r_{\tau}} L^2 \otimes \dt^{a_{\tau}})^{\tilde{\tau}},$$
where for each $\tau$ we fix $\tilde{\tau}:F\hookrightarrow L$, inducing $\tau$ on the residue fields.
\end{remar} 

We also note that although we know that the completion $E$ in Corollaries \ref{cor1}, \ref{cor2} is admissible, we do not know in general 
whether it is of finite length as a (topological) representation of $G$.

\begin{lem}\label{trivialll} Let $\pi$ be a smooth $L$-representation of $G$; $\tau$ a smooth irreducible representation of $K$, such that $\Hom_K(\tau,\pi)\neq 0$; 
$W$ a $\Qp$-rational representation of $G$, twisted by a continuous character. Suppose that $(E,\|\centerdot\|)$ is a unitary $L$-Banach space representation
of $G$, which contains $\pi\otimes_L W$ as a dense $G$-invariant subspace. Choose a $K$-invariant lattice $M$ in $\tau\otimes_L W$, then there exists 
an irreducible subquotient $\sigma$ of $M\otimes_A k$ such that $\Hom_K(\sigma, E^0\otimes_A k)\neq 0$.
\end{lem}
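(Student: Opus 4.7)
The plan is to embed the lattice into $E^0$, reduce mod $\varpi_L$, and then extract an irreducible subquotient from the image.

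First, since $\Hom_K(\tau,\pi)\neq 0$ and $\tau$ is irreducible, there is a $K$-equivariant embedding $\tau\hookrightarrow \pi$. Tensoring with $W$ over $L$ gives a $K$-equivariant embedding $\tau\otimes_L W\hookrightarrow \pi\otimes_L W\subseteq E$. Since $\tau$ is smooth irreducible of the profinite group $K$ it is finite dimensional, and $W$ is finite dimensional, so $\tau\otimes_L W$ is a finite dimensional $L$-subspace of $E$. The restriction of $\|\centerdot\|$ to this finite dimensional subspace is a norm, and all norms on a finite dimensional $L$-vector space are equivalent, see \cite[Prop.~4.13]{nfa}. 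Thus $M$, being a finitely generated $A$-submodule, is bounded in $\|\centerdot\|$.

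Since $M$ is bounded and $\|E\|\subseteq |L|$ (which we assume after Corollary \ref{principalseries}), we may choose $c\in L^{\times}$ so that $cM\subseteq E^0$ but $cM\not\subseteq \varpi_L E^0$, i.e.\ $cM$ meets $E^0\setminus \varpi_L E^0$. Multiplication by $c$ is a $K$-equivariant $A$-linear isomorphism $M\overset{\sim}\to cM$, so replacing $M$ by $cM$ does not change the isomorphism class of $M\otimes_A k$ as a $k[K]$-module. Hence we may assume $M\subseteq E^0$ and $M\not\subseteq \varpi_L E^0$.

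The composition $M\hookrightarrow E^0\twoheadrightarrow E^0/\varpi_L E^0 = E^0\otimes_A k$ is a $K$-equivariant $A$-linear map which is nonzero by the choice of scaling. It factors through $M\otimes_A k$ and induces a nonzero $K$-equivariant $k$-linear map
\[
\overline{\phi}: M\otimes_A k \longrightarrow E^0\otimes_A k.
\]
Let $N$ be the image of $\overline{\phi}$; it is a nonzero $k[K]$-submodule of $E^0\otimes_A k$ and a quotient of $M\otimes_A k$. Since $M\otimes_A k$ is finite dimensional, $N$ has finite length, and we may pick an irreducible subobject $\sigma$ of $N$. Then $\sigma$ is a subobject of a quotient of $M\otimes_A k$, hence an irreducible subquotient of $M\otimes_A k$, and the inclusion $\sigma\hookrightarrow N\subseteq E^0\otimes_A k$ shows $\Hom_K(\sigma, E^0\otimes_A k)\neq 0$, as required.

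There is no real obstacle here; the only point worth checking is that the scaling argument goes through cleanly, which relies on $\|E\|\subseteq |L|$ and the equivalence of norms on finite dimensional $L$-subspaces of $E$.
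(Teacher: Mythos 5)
Your proof is correct, and it differs from the paper's in a small but genuine way. The paper first replaces the given lattice $M$ by the ``saturated'' lattice $M':=(\tau\otimes_L W)\cap E^0$, invokes Lemma \ref{redimq} to obtain an \emph{injection} $M'\otimes_A k\hookrightarrow E^0\otimes_A k$, takes an irreducible $\sigma$ in the socle of $M'\otimes_A k$, and then transfers back to $M$ via the Brauer--Nesbitt principle, namely that $(M'\otimes_A k)^{ss}=(M\otimes_A k)^{ss}$ because $\tau\otimes_L W$ is finite dimensional. You instead scale the given $M$ into $E^0$ and accept that the reduction $M\otimes_A k\to E^0\otimes_A k$ may have a kernel; you then take $\sigma$ in the socle of the image $N$, which is a subobject of a quotient of $M\otimes_A k$, hence a subquotient. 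This avoids quoting Lemma \ref{redimq} and avoids the semisimplification comparison, at the price of a short scaling argument (which is fine: the norm restricted to the finite-dimensional subspace is equivalent to any lattice gauge, $\|E\|\subseteq|L|$ is discrete, so a suitable $c\in L^\times$ exists). Both routes work; yours is a touch more elementary, the paper's is a touch more uniform in that it re-uses Lemma \ref{redimq}, which is also used elsewhere.
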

\begin{proof} Let $M':= (\tau\otimes_L W)\cap E^0$, then Lemma  \ref{redimq} gives an injection $M'\otimes_A k\hookrightarrow E^0\otimes_A k$. In particular,
there exists some irreducible $k$-representation $\sigma$ of $K$, such that $\Hom_K(\sigma, M'\otimes_A k)\neq 0$ and  $\Hom_K(\sigma, E^0\otimes_A k)\neq 0$.
Since $\tau\otimes_L W$ is finite dimensional we have $(M'\otimes_A k)^{ss}=(M\otimes_A k)^{ss}$. 
\end{proof}

Lemma \ref{trivialll} together with  Proposition \ref{converse} 
shows that any admissible completion of $\pi\otimes W$ arises from our construction.

\begin{lem}\label{reform} 
Let $\chi_1$, $\chi_2: F^{\times}\rightarrow L^{\times}$ be smooth characters and let $\pi:=\Indu{B}{G}{\chi_1\otimes \chi_2|\centerdot|^{-1}}$ be a 
smooth $L$-representation of $G$. For each $\sigma\in \Sigma$, let $r_{\sigma}\ge 0$ be an integer, set 
$W:=\otimes_{\sigma\in \Sigma} (\Sym^{r_{\sigma}} L^2)^{\sigma}$, 
and let  $\eta: F^{\times} \rightarrow L^{\times}$ be a continuous character. Suppose that $\pi\otimes W\otimes \eta\circ \det$ admits 
a unitary completion and set $\lambda_1:=\chi_1(\pif)^{-1}$ and 
$\lambda_2:= \chi_2(\pif)^{-1}$, then the following hold:
\begin{itemize}
\item[(i)] $(\val(\lambda_1)-\val(\eta(\pif))+(\val(\lambda_2)-\val(\eta(\pif)))=(1+ \sum_{\sigma\in \Sigma} r_{\sigma})/e$;
\item[(ii)] $\val(\lambda_2)-\val(\eta(\pif))\ge 0$, $\val(\lambda_1)-\val(\eta(\pif))\ge 0$;
\end{itemize}
where $e:=e(F|\Qp)$ is the ramification index. Moreover, if $\varpi\in Z$ acts trivially on $\pi\otimes W \otimes \eta \circ \det$ then 
$$\lambda_1\lambda_2=p^{1/e}\eta(\pif)^2 \prod_{\sigma\in \Sigma} \sigma(\pif)^{r_{\sigma}}.$$
\end{lem}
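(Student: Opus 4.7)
The plan is to handle the three assertions in sequence. For the \emph{moreover} formula, I would compute how $\pif \in Z$ acts on each of the three tensor factors separately. On $\pi = \Ind_B^G(\chi_1 \otimes \chi_2|\centerdot|^{-1})$ the central character sends $\pif$ to $\chi_1(\pif)\chi_2(\pif)|\pif|^{-1} = \lambda_1^{-1}\lambda_2^{-1}p^{1/e}$. On $W = \bigotimes_{\sigma}(\Sym^{r_\sigma}L^2)^\sigma$, the scalar matrix $\pif\cdot I$ acts on the $\sigma$-component as $\sigma(\pif)^{r_\sigma}$ (since each $\GL_2$-factor acts through $\sigma$), giving the global scalar $\prod_{\sigma}\sigma(\pif)^{r_\sigma}$. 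On $\eta\circ\det$, $\pif$ acts by $\eta(\pif)^2$. Multiplying these three scalars, setting the product equal to $1$, and solving for $\lambda_1\lambda_2$ yields the displayed formula.

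For (i), the key point is that a unitary Banach space representation has its central character valued in $A^{\times}$, because the center must act by isometries; hence the valuation of the scalar by which $\pif$ acts on $\pi\otimes W\otimes \eta\circ\det$ is zero. Taking $\val$ of the expression computed above, using $\val(\sigma(\pif)) = 1/e$ for every embedding $\sigma$, and rearranging immediately produces the identity $(\val(\lambda_1)-\val(\eta(\pif)))+(\val(\lambda_2)-\val(\eta(\pif)))=(1+\sum_{\sigma}r_{\sigma})/e$.

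The content of (ii) is more substantial. After twisting by $(\eta\circ\det)^{-1}$ (which replaces $\chi_i$ by $\chi_i\eta^{-1}$ and $\lambda_i$ by $\lambda_i\eta(\pif)^{-1}$) we reduce to the case $\eta=1$, where the claim becomes $\val(\lambda_i)\ge 0$ for $i=1,2$. Let $L^0\subset E$ be a $G$-invariant open bounded lattice and set $M:=L^0\cap(\pi\otimes W)$, a $G$-invariant $A$-lattice. The plan is to combine the Bruhat-cell model of $\pi$ with the weight decomposition of $W$ under the diagonal torus to produce explicit test vectors whose behavior under $t=\bigl(\begin{smallmatrix}\pif&0\\0&1\end{smallmatrix}\bigr)$ isolates $\lambda_i^{-1}$ up to algebraic factors coming from $W$. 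Specifically, taking $f$ in $\pi$ supported on the big cell $BsU$ with $f(bsu)=\chi(b)\mathbf{1}_{\oF}(u)$ in the standard coordinate on $U$, a direct matrix calculation yields $t\cdot f = \lambda_2^{-1}p^{1/e}\cdot f'$, where $f'$ is the analogous function with support shrunk by $\pif$. Tensoring with an appropriate extremal weight vector in $W$ makes the action of $t$ on the test vector a clean scalar, and one uses the $G$-invariance of the $E$-norm on $M$ together with iteration of the torus action to force the required valuation bound on $\lambda_2$. The symmetric inequality for $\lambda_1$ follows either from the analogous construction on the opposite Bruhat stratum or by conjugating $t$ by the Weyl element $s$ and invoking the identity (i) to convert one bound into the other.

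The main obstacle will be the careful selection of the test vector. A naive choice of weight vector in $W$ spreads the $t$-action across many summands corresponding to distinct weights, burying the clean $\lambda_i^{-1}$ scaling. Once the right extremal weight of $W$ is paired with the correct Bruhat-cell function in $\pi$, so that the $t$-orbit of the test vector traces out a single geometric progression in $M$, converting the resulting norm identity into the valuation inequality $\val(\lambda_i)\ge 0$ is straightforward.
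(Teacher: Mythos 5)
Your treatment of the ``moreover'' formula and of (i) is correct and matches the paper: one computes the scalar by which $\pif$ acts on the three tensor factors, observes that unitarity forces this scalar to lie in $A^{\times}$, and takes valuations. One small caveat on the reduction to $\eta=1$: twisting by $(\eta\circ\det)^{-1}$ does not obviously preserve the existence of a unitary completion, since $\eta$ itself need not be unitary. The paper instead factors $\eta$ as $|\eta|^{-1}\cdot(\eta|\eta|)$; the second factor is unitary (so twisting by it is harmless), while $|\eta|^{-1}$ is unramified smooth and can be absorbed into $\chi_1,\chi_2$, shifting each $\val(\lambda_i)$ by $\val(\eta(\pif))$. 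Your reduction can be salvaged, but not by a blind twist by $\eta^{-1}$.

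The real gap is in (ii), and it is structural. The test function $\varphi$ supported on $Bs(I_1\cap U)$ is \emph{not} a $t$-eigenvector: $t\varphi$ has shrunken support $Bs(K_1\cap U)$ and is a genuinely different function, not a scalar multiple of $\varphi$. Tensoring with a highest weight vector $X^{\mathbf r}\in W$ multiplies the scalar but does not change the support, so the claim that ``the action of $t$ on the test vector'' becomes ``a clean scalar'' is false, and iterating $t$ only produces a sequence of pairwise non-proportional vectors of the same norm, from which no valuation bound follows. The ingredient you are missing is the identity
$$\sum_{\lambda\in\oF/\pF}\begin{pmatrix}1&\lambda\\0&1\end{pmatrix}t(\varphi\otimes X^{\mathbf r})=\bigl(\lambda_2^{-1}p^{1/e}\textstyle\prod_{\sigma}\sigma(\pif)^{r_\sigma}\bigr)\,\varphi\otimes X^{\mathbf r},$$
which recovers $\varphi\otimes X^{\mathbf r}$ as a sum of $q$ translates of $t(\varphi\otimes X^{\mathbf r})$, each of the same norm (here it is essential that $U$ fixes $X^{\mathbf r}$, which is why the extremal weight vector must be the $U$-fixed one). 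The ultrametric inequality then forces $\lambda_2^{-1}p^{1/e}\prod_\sigma\sigma(\pif)^{r_\sigma}\in A$, i.e.\ $\val(\lambda_2)\le(1+\sum r_\sigma)/e$, and combined with (i) this yields $\val(\lambda_1)\ge 0$. The symmetric bound also requires more than ``conjugating by the Weyl element'': when $\chi_1\neq\chi_2|\centerdot|^{\pm1}$ the intertwining isomorphism $\pi\cong\Indu{B}{G}{\chi_2\otimes\chi_1|\centerdot|^{-1}}$ lets you rerun the same argument with the roles of $\chi_1,\chi_2$ swapped, but the degenerate cases $\chi_1=\chi_2|\centerdot|^{\pm1}$ (where $\pi$ is reducible and the intertwiner fails) must be handled separately; in the case $\chi_1=\chi_2|\centerdot|^{-1}$ one even has to invoke Lemma \ref{latticefin} to force $r_\sigma=0$. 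Your proposal does not address these cases.
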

\begin{proof} We have $\pi\otimes W\otimes \eta\circ \det\cong (\pi\otimes |\eta|^{-1}\circ \det)\otimes W\otimes (\eta |\eta|)\circ \det$. The character $\eta|\eta|$ is 
unitary, and if we let $\chi_1':=\chi_1 |\eta|^{-1}$ and $\chi_2':=\chi_2 |\eta|^{-1}$ then the characters $\chi_1'$ and $\chi_2'$ are smooth and  
$\pi\otimes |\eta|^{-1}\circ \det\cong \Indu{B}{G}{\chi_1'\otimes \chi_2'|\centerdot|^{-1}}$. Since $\val(\chi_i'(\pif))= \val(\chi_i(\pif))+\val(\eta(\pif))$, for 
$i\in \{1,2\}$, we may assume that $\eta$ is the trivial character.

Note that $\pif$ acts on $\pi\otimes_L W$ by a scalar 
$\lambda_1^{-1}(p^{1/e}\lambda_2^{-1}) \prod_{\sigma\in \Sigma} \sigma(\pif)^{r_{\sigma}}$. If $\pi\otimes W$ admits a unitary completion then the 
central character of 
$\pi\otimes W$ has to be unitary, which is equivalent to $\pif$ acting by a scalar in $A^{\times}$. This implies 
$$\val(\lambda_1)+\val(\lambda_2)= (1+\sum_{\sigma\in \Sigma} r_{\sigma})/e.$$ 
This gives the first and also the last assertion.  

Let $\varphi\in \pi$ be the function such that $\supp \varphi= B sI= B s (I_1 \cap U)$ and
$\varphi(su)=1$, for all $u\in I_1\cap U$. Then $t\varphi$ is the unique function in $\pi$ with the support  
$ (\supp \varphi) t^{-1}= B s (K_1\cap U)$ and $[t\varphi](su)=\chi_2(\pif)|\pif|^{-1}$, for all $u\in K_1\cap U$. Hence, we have 
\begin{equation}\label{sumvarphi}
\sum_{\lambda \in \oF/\pF} \begin{pmatrix} 1 & \lambda \\ 0 & 1\end{pmatrix} t \varphi = \lambda_2^{-1} p^{1/e} \varphi
\end{equation} 
Let $X^{\mathbf{r}}\in W$, $X^{\mathbf{r}}= \otimes_{\sigma\in \Sigma} X^{r_{\sigma}}$. Then $U$ acts trivially on $X^{\mathbf{r}}$ and $t X^{\mathbf{r}}=
(\prod_{\sigma\in \Sigma} \sigma(\pif)^{r_{\sigma}}) X^{\mathbf r}$. Hence, \eqref{sumvarphi} gives us 
\begin{equation}\label{sumvarphiX}
 \sum_{\lambda \in \oF/\pF} \begin{pmatrix} 1 & \lambda \\ 0 & 1\end{pmatrix} t (\varphi\otimes X^{\mathbf r}) = 
(\lambda_2^{-1} p^{1/e} \prod_{\sigma\in \Sigma}\sigma(\pif)^{r_{\sigma}})
\varphi\otimes X^{\mathbf{r}}.
\end{equation}
Suppose that  $\pi\otimes W$ admits a unitary completion. Since the action
of $G$ is unitary, the triangle inequality applied to \eqref{sumvarphiX} gives $\lambda_2^{-1} p^{1/e} \prod_{\sigma\in \Sigma}\sigma(\pif)^{r_{\sigma}}\in A$. Hence, 
$\val(\lambda_2)\le (1 +\sum_{\sigma\in \Sigma} r_{\sigma})/e$.  If $\chi_1=\chi_2|\centerdot|$ then 
$\val(\lambda_1)< \val(\lambda_2)$ and so (ii) follows from (i). If $\chi_1=\chi_2|\centerdot|^{-1}$ then 
$W\otimes \chi_1\circ\det$ is a subrepresentation of $\pi\otimes W$. Hence, we obtain a $G$-invariant norm 
on $W\otimes \chi_1\circ\det$. Lemma \ref{latticefin} implies that $r_{\sigma}=0$ for all $\sigma\in \Sigma$. Part (i) gives 
$\val (\lambda_1)+\val(\lambda_2)=1/e$ and so $\val(\lambda_2)=1/e$ and $\val(\lambda_1)=0$.

Suppose that $\chi_1\neq \chi_2|\centerdot|^{\pm 1}$
then $\pi$ is irreducible and the intertwining operator induces an isomorphism 
$\pi\cong\Indu{B}{G}{\chi_2\otimes \chi_1|\centerdot|^{-1}}$, see \cite[Thm 4.5.3]{bump}. Hence, we also obtain
$\val(\lambda_1)\le (1 +\sum_{\sigma\in \Sigma} r_{\sigma})/e$, which implies (ii).  
\end{proof}

Let $\chi_1$, $\chi_2: F^{\times}\rightarrow L^{\times}$ be smooth characters and let $\pi:=\Indu{B}{G}{\chi_1\otimes \chi_2|\centerdot|^{-1}}$ be a 
smooth $L$-representation of $G$. Set $\lambda_1:=\chi_1(\pif)^{-1}$ and $\lambda_2=\chi_2(\pif)^{-1}$.  For each $\sigma\in \Sigma$, let $r_{\sigma}\ge 0$ be an integer, set 
$W:=\otimes_{\sigma\in \Sigma} (\Sym^{r_{\sigma}} L^2)^{\sigma}$, 
and let  $\eta: F^{\times} \rightarrow L^{\times}$ be a continuous character. Suppose that $\pi\otimes W\otimes \eta\circ \det$ is a dense $G$-invariant 
subspace in a unitary $L$-Banach space representation $E$ of $G$. Let $\theta: F^{\times} \rightarrow L^{\times}$ be the character $\theta(x):=
\prod_{\sigma\in \Sigma} \sigma(x)^{r_{\sigma}}$. Assume that $\val(\lambda_1)=\val(\eta (\pif))$, then it follows from Lemma \ref{reform} that 
the characters $\chi_1 \eta$ and $\chi_2|\centerdot|^{-1} \eta \theta$ are integral. Let 
$$\psi_1:=\chi_1\eta \pmod{1+\MM}, \quad \psi_2:=\chi_2 |\centerdot|^{-1}\eta \theta \pmod{1+\MM}.$$

\begin{lem}\label{ordinary} Assume that we are in the situation as above. Let $\|\centerdot \|$ be a $G$-invariant norm defining the topology on $E$ and let $E^0$ 
be the unit ball with respect to $\|\centerdot\|$ 
then $\Hom_G(\Indu{B}{G}{\psi_1\otimes\psi_2}, E^0\otimes_A k)\neq 0.$
\end{lem}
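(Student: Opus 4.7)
The plan is to extract an explicit \emph{ordinary} vector from the mod-$\varpi_L$ reduction of $E$ and promote it to the required $G$-homomorphism via a Barthel--Livne style presentation of $\Indu{B}{G}{\psi_1 \otimes \psi_2}$. Set
\[
f := \varphi \otimes X^{\mathbf r} \otimes v_\eta \in \pi \otimes W \otimes \eta \circ \det \subseteq E,
\]
with $\varphi \in \pi$ the function from the proof of Lemma~\ref{reform}, $X^{\mathbf r}$ the highest-weight vector of $W$, and $v_\eta$ a generator of $\eta \circ \det$. By \eqref{sumvarphiX}, $f$ is an eigenvector for the operator $U_p := \sum_{\lambda \in \oF/\pF} \bigl(\begin{smallmatrix}1 & \lambda \\ 0 & 1\end{smallmatrix}\bigr) t$ with eigenvalue $\lambda_2^{-1} p^{1/e}\theta(\pif)\eta(\pif)$. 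The hypothesis $\val(\lambda_1) = \val(\eta(\pif))$ together with Lemma~\ref{reform}(i) forces this eigenvalue to have $p$-adic valuation zero, so it is a unit in $A$ reducing modulo $\MM$ to $\psi_2(\pif) \in k^\times$. The same integrality ensures that $\chi_1\eta$ and $\chi_2|\centerdot|^{-1}\eta\theta$ are unitary, so $\psi_1$ and $\psi_2$ are well-defined smooth characters of $F^\times$ and $\Indu{B}{G}{\psi_1 \otimes \psi_2}$ is a bona fide smooth $k$-representation of $G$.

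Rescaling $f$ so that $f \in E^0 \setminus \varpi_L E^0$, the image $\bar f \in E^0 \otimes_A k$ is a nonzero $I$-eigenvector: the unipotent parts of $I$ fix $\bar f$, the torus $I \cap T$ acts by the character $(a,d) \mapsto \psi_2(a)\psi_1(d)$ (the swap reflects that $\varphi$ is supported on $BsI$, cf.\ the computation $si = \mathrm{diag}(d,a)\cdot s$), and $U_p\bar f = \psi_2(\pif)\bar f$. The standard Iwahori-eigenvector $\tilde\varphi \in \Indu{B}{G}{\psi_1 \otimes \psi_2}$ supported on $BsI$ with $\tilde\varphi(s)=1$ has exactly the same local data by an entirely parallel computation.

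To convert this matching into a genuine $G$-homomorphism I would invoke the Barthel--Livne style presentation
\[
\Indu{B}{G}{\psi_1 \otimes \psi_2} \cong \cIndu{\KK_0}{G}{\tau}/(T - \lambda_{BL}),
\]
where $\tau$ is the extension to $\KK_0 = KZ$ of the $\GL_2(k_F)$-induction $\Ind_{B\cap K}^{K/K_1}(\bar\psi_1 \otimes \bar\psi_2)$ (with central character forced by $\psi_1(\pif)\psi_2(\pif)$) and $T$ is the standard spherical Hecke operator with eigenvalue $\lambda_{BL}$ determined by $\psi_2(\pif)$. Since $\KK_0$ is open in $G$, ordinary Frobenius reciprocity identifies $\Hom_G(\Indu{B}{G}{\psi_1 \otimes \psi_2}, E^0 \otimes_A k)$ with the $\lambda_{BL}$-eigenspace of $T$ inside $\Hom_{\KK_0}(\tau, E^0 \otimes_A k)$. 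The $\KK_0$-subrepresentation of $E^0 \otimes_A k$ generated by $\bar f$ contains a copy of $\tau$ --- its prescribed Iwahori-eigenvector data matches one of the two standard Iwahori eigenvectors in $\tau$, and the known $I$-invariant structure of $\tau$ forces the full $\KK_0$-type to appear --- and the $U_p$-eigenvalue identity on $\bar f$ translates directly into the spherical Hecke relation $Tv = \lambda_{BL}v$ on the corresponding generator, yielding the desired nonzero homomorphism.

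The main obstacle will be verifying this last step rigorously: that the $\KK_0$-orbit of $\bar f$ truly realises a copy of $\tau$ (rather than a proper subquotient) and that the spherical Hecke operator acts with the predicted eigenvalue. For generic parameters where $\Indu{B}{G}{\psi_1 \otimes \psi_2}$ is irreducible this is a standard computation relying only on the Iwahori-eigenvector structure of $\tau$ and admissibility of $E^0 \otimes_A k$; in degenerate cases such as $\psi_1 = \psi_2$ or when the principal series is reducible, one instead twists by an auxiliary character to separate the Iwahori data, or works inside individual Jordan--H\"older constituents and assembles the required homomorphism from the appropriate $\KK_0$-socle.
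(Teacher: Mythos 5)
Your outline is close in spirit to the paper's, and the early steps are sound: you correctly identify the $U_p$-eigenvector $f=\varphi\otimes X^{\mathbf r}\otimes v_\eta$, correctly read off from Lemma~\ref{reform} that the eigenvalue is a unit, and correctly note that the integrality of $\chi_1\eta$ and $\chi_2|\centerdot|^{-1}\eta\theta$ makes $\psi_1,\psi_2$ well-defined. However, the proof contains a genuine gap at the pivot point, and your own closing paragraph concedes the second half.

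The gap: you assert ``the unipotent parts of $I$ fix $\bar f$,'' but this is precisely the nontrivial content of the lemma, not an input to it. The upper unipotent $I\cap U$ fixes $f$ outright. For the lower unipotent $I_1\cap\bar U$, however, the vector $\varphi$ is \emph{not} fixed: for $\bar u\in I_1\cap\bar U$ one computes $\bar u\varphi$ differs from $\varphi$ by factors of the form $\chi_1(1+bc)^{-1}\chi_2(1+bc)$ with $bc\in\pF$. These scalars lie in $1+\MM$, so the \emph{function values} of $\bar u\varphi-\varphi$ lie in $\MM$; but the norm $\|\centerdot\|$ on $E$ is an arbitrary $G$-invariant norm, not the supremum of function values, and $\bar u f - f$ is not a scalar multiple of $f$. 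So one cannot conclude $\|\bar u f - f\|<1$ by inspecting values, and there is no elementary reason for $\bar u\bar f = \bar f$. What the paper actually proves is only that $I_1\cap B$ acts trivially on $v$ (there the error term \emph{is} a scalar $c\in 1+\MM$ times $f$, hence $\|(c-1)f\|<1$). The full $I_1$-invariance, the irreducibility of $\sigma:=\langle K\centerdot v\rangle$, and the identification of the resulting Hecke eigenvalue are then all extracted from the equation $U_p v=\bar u v$ with $\bar u\neq 0$ via the cited \cite[Thm.~5.4]{borel}; your argument skips this step entirely.

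A secondary issue is your final reduction. You propose to present $\Indu{B}{G}{\psi_1\otimes\psi_2}$ as $\cIndu{\KK_0}{G}{\tau}/(T-\lambda_{BL})$ with $\tau$ the full Iwahori induction $\Ind_{B\cap K}^{K/K_1}(\bar\psi_1\otimes\bar\psi_2)$; this is not \cite[Thm.~30]{bl}, which is stated for irreducible weights and whose Hecke algebra is a polynomial ring in one variable precisely because the weight is irreducible. With $\tau$ reducible the Hecke algebra is larger and a single equation $T=\lambda_{BL}$ does not cut out the principal series. The paper sidesteps this by first using \cite[Thm.~5.4]{borel} to produce an \emph{irreducible} $K$-representation $\sigma=\langle K\centerdot v\rangle$ and only then invoking \cite[Lem.~3.1]{borel} and \cite[Thm.~30]{bl} to factor $\cIndu{KZ}{G}{\sigma}\to\langle G\centerdot v\rangle$ through the principal series. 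Your last paragraph acknowledges that ``the main obstacle'' is verifying that $\langle\KK_0\centerdot\bar f\rangle$ realises a copy of $\tau$ with the correct Hecke eigenvalue, and the proposed remedies for the degenerate cases (auxiliary twists, assembling a homomorphism ``from the appropriate $\KK_0$-socle'') remain sketches; the paper's route through \cite{borel} handles all cases uniformly.
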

\begin{proof} We note that  the assertion is true for $\pi$ if and only if it is true for $\pi\otimes \xi\circ \det$, for some  $\xi: F^{\times}\rightarrow A^{\times}$ an
unramified character. Hence, we may assume that $\eta$ is trivial, and $\pif$ acts trivially (possibly after replacing  $L$ with a quadratic extension). Let $\phi\in \pi$ 
and $X^{\mathbf r}\in W$ be as in the proof of Lemma \ref{reform}. We may assume that $\|\phi\otimes X^{\mathbf r}\|=1$. Let $v$ be the image of $\phi\otimes X^{\mathbf r}$
in $E^0\otimes_A k$, then $v\neq 0$, and $I_1\cap B$ acts trivially on $v$, 
$$\begin{pmatrix} [\lambda] & 0 \\ 0 & [\mu]\end{pmatrix}  v= \psi_2([\lambda]) \psi_1([\mu]) v, \quad \forall \lambda, \mu\in k_F.$$ 
Set $u:=\lambda_2^{-1}p^{1/e} \prod_{\sigma\in \Sigma} \sigma(\pif)^{r_{\sigma}}$, then Lemma \ref{reform} implies that $u$ is a unit in $A$. Let $\bar{u}$
be the image of $u$ in $k$. Then \eqref{sumvarphiX} reduces to:
\begin{equation}\label{sumv}
 \sum_{\lambda \in \oF/\pF} \begin{pmatrix} 1 & \lambda \\ 0 & 1\end{pmatrix} t v =  \bar{u} v.
\end{equation}
Since $\bar{u}\neq 0$ we  are in the situation described in the proof of \cite[Thm. 5.4]{borel}, ($v$ corresponds to $\phi_2$). The argument there gives:
\begin{itemize}
\item[(i)] $v$ is fixed by $I_1$;
\item[(ii)]$\sigma:=\langle K\centerdot v\rangle$ is an irreducible representation of $K$, and if $\psi_1|_{\oF^{\times}}=\psi_2|_{\oF^{\times}}$ 
then $\sigma\cong \St\otimes \psi_1\circ \det$;
\end{itemize} 
Now \eqref{sumv} and \cite[Lem 3.1]{borel} together with \cite[Thm. 30]{bl} implies that the map 
$\cIndu{KZ}{G}{\sigma}\rightarrow \langle G\centerdot v\rangle$ factors through 
$\Indu{B}{G}{\xi_1\otimes \xi_2}$, where  
$\xi_1|_{\oF^{\times}}=\psi_1$, $\xi_2|_{\oF^{\times}}=\psi_2$, $\xi_2(\pif)=\xi_1(\pif^{-1})=\bar{u}$, 
This gives the result.
\end{proof}
One may call the situation of Lemma \ref{ordinary} the ordinary case. Lemma \ref{ordinary} shows that most of the time the completions we obtain 
in Corollaries \ref{cor1} and \ref{cor2} are not ordinary.

\section{The case $F=\Qp$}\label{FequalsQp}
We assume that $F=\Qp$ and $p>2$, and we study in more detail the consequences of Corollaries \ref{cor1}, \ref{cor2}.  Barthel-Livn\'e have shown in \cite{bl} 
that smooth irreducible $\bar{k}$-representations of $G$, with the central character fall into four disjoint classes:
\begin{itemize}
\item[(1)] $\chi\circ \det$;
\item[(2)] $\Sp\otimes \chi\circ \det$;
\item[(3)] $\Indu{B}{G}{\chi_1\otimes \chi_2}$, $\chi_1\neq \chi_2$;
\item[(4)] supersingular;
\end{itemize} 
where $\Sp$ is the Steinberg representation defined by the exact sequence $0\rightarrow \Eins\rightarrow \Indu{B}{G}{\Eins}\rightarrow \Sp\rightarrow 0$.
Breuil in \cite{breuil1} has classified the supersingular representations. We recall the classification. Fix an integer $0\le r \le p-1$, then 
the representation $\Sym^r \bar{k}^2$ of $K$ is irreducible. We put the action of $KZ$ on  $\Sym^r \bar{k}^2$ by making $p$ act trivially. 
Let $\HH$ be the Hecke algebra, $\HH:=\End_G(\cIndu{KZ}{G}{\Sym^r \bar{k}^2})$. Proposition 8 of \cite{bl} asserts that
as a $\bar{k}$-algebra $\HH$ is isomorphic to a polynomial ring in one variable $\bar{k}[T]$, where $T\in \HH$ is an endomorphism defined in \cite[\S3]{bl}. 
Moreover, $\cIndu{KZ}{G}{\Sym^r \bar{k}^2}$ is a free $\HH$-module, \cite[Thm.19]{bl}. Define, 
$$\kappa(r):=  \frac{ \cIndu{KZ}{G}{\Sym^r \bar{k}^2}}{(T)},$$ 
and if $\eta:\Qp^{\times}\rightarrow \bar{k}^{\times}$ is a smooth character, then set $\kappa(r,\eta):=\kappa(r)\otimes\eta\circ \det$. Breuil has shown  
\cite[Thm.1.1]{breuil1} that the representations $\kappa(r,\eta)$ are irreducible and any irreducible supersingular representation of $G$ is isomorphic to 
$\kappa(r,\eta)$, for some $0\le r\le p-1$ and $\eta$. All the isomorphism between supersingular representations corresponding 
to different $r$ and $\eta$ are given by 
\begin{equation}\label{intertwine}
\kappa(r,\eta)\cong \kappa(r,\eta\mu_{-1})\cong \kappa(p-1-r,\eta\omega^{r})\cong \kappa(p-1-r,\eta\omega^{r}\mu_{-1})
\end{equation}
see \cite[Thm. 1.3]{breuil1}, where $\omega:\Qp^{\times}\rightarrow \bar{k}^{\times}$ is a character given by $\omega(p)=1$ and $\omega|_{\Zp^{\times}}$ is the natural
map $\Zp^{\times}\rightarrow \Fp^{\times}\rightarrow \bar{k}^{\times}$, and given $\lambda\in \bar{k}$, we denote by 
$\mu_{\lambda}: \Qp^{\times}\rightarrow \bar{k}^{\times}$ the unramified character $x\mapsto \lambda^{\val(x)}$. 

\begin{lem}\label{rational} Every smooth irreducible $\bar{k}$-representation of $G$ with a central character can be realized over a finite extension of $k$.
\end{lem}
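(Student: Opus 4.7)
The plan is to traverse the Barthel--Livn\'e--Breuil classification case by case and exhibit, for each irreducible $\pi$ with a central character, a finite extension $k'/k$ and a $k'[G]$-module $V$ with $V\otimes_{k'}\bar k \cong \pi$. The central observation is that any smooth character $\chi\colon \Qp^\times \to \bar k^\times$ takes values in a finite extension of $k$: since $\Zp^\times$ is compact and $\bar k^\times$ is discrete, the image $\chi(\Zp^\times)$ is finite, and $\chi(p)$ is a single element of $\bar k$, so adjoining these to $k$ yields the required finite subextension.

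For classes (1)--(3) this observation is essentially all one needs. For $\chi\circ\det$ one takes $k'$ to be a field of definition of $\chi$. For $\Sp\otimes \chi\circ\det$ the Steinberg representation $\Sp$ is already defined over $\Fp$ (it is the cokernel of the canonical map $\Eins \hookrightarrow \Indu{B}{G}{\Eins}$), so the same $k'$ works. For $\Indu{B}{G}{\chi_1\otimes\chi_2}$, apply the observation to $\chi_1$ and $\chi_2$ and take a common field of definition, using that smooth induction commutes with base change.

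The supersingular class requires a little more care but is still routine. The representation $\Sym^r \Fp^2$ of $K$ extends to a representation of $KZ$ over $\Fp$ with $p$ acting trivially, so $\cIndu{KZ}{G}{\Sym^r \Fp^2}$ is a $G$-representation over $\Fp$. The Hecke operator $T$ is given by an explicit sum over cosets with matrix coefficients in $\Fp$ (see \cite[\S 3]{bl}), and hence already defines an element of $\End_{\Fp[G]}(\cIndu{KZ}{G}{\Sym^r \Fp^2})$. Since compact induction and quotienting by the principal ideal $(T)$ both commute with extension of scalars from $\Fp$ to $\bar k$, the $G$-representation
$$\cIndu{KZ}{G}{\Sym^r \Fp^2}/(T)$$
has base change to $\bar k$ isomorphic to $\kappa(r)$. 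Twisting by $\eta\circ\det$ and applying the observation to $\eta$ then produces a finite extension of $k$ over which $\kappa(r,\eta)$ is defined.

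The only possible obstacle is bookkeeping: one must verify that each construction involved (smooth induction from $B$, compact induction from $KZ$, and passage to the quotient by $T$) commutes with extension of scalars, but this is immediate from the explicit definitions. In particular, one does not need irreducibility over the small field $k'$, only over $\bar k$, which is guaranteed by Breuil's theorem.
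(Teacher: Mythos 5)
Your proposal is correct and follows essentially the same route as the paper: reduce to the observation that smooth $\bar k$-characters of $\Qp^\times$ take values in a finite extension of $k$ (you argue via compactness of $\Zp^\times$ where the paper argues via topological finite generation of $\Qp^\times$, but the two are interchangeable), handle the Steinberg and supersingular cases by noting they are already defined over $\Fp$ via the explicit $T$, and pick up twists via the character observation. The added remarks about base-change compatibility and irreducibility only being needed over $\bar k$ are correct but do not change the argument.
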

\begin{proof} Since $\Qp^{\times}$ is finitely generated as a topological group, given a smooth character $\eta: \Qp^{\times}\rightarrow \bar{k}^{\times}$, 
the image $\eta(\Qp^{\times})$ lies in a finite extension of $k$. Hence, all the principal series representations in (3) can be realized over a 
finite extension and if $\kappa$ can be realized over a finite extension, then so can 
$\kappa\otimes \eta\circ \det$. It is clear that the trivial representation can be realized over $\Fp$, and then we may realize $\Sp$ as the quotient
$0\rightarrow \Fp\rightarrow \Indu{B}{G}{\Fp} \rightarrow \Sp\rightarrow 0$. Moreover, we may realize $\kappa(r)$ over $\Fp$ as 
$  \frac{ \cIndu{KZ}{G}{\Sym^r \Fp^2}}{(T)}$, as $T$ in this case is defined over $\Fp$, \cite[Prop 8]{bl}.
\end{proof}

\begin{lem}\label{socleK} Let $\sigma:=\Sym^{r} \bar{k}^2\otimes \det^a$, with $0\le r\le p-1$, $0\le r<p-1$. Let $\kappa$ be an irreducible smooth 
$\bar{k}$-representation of $G$, such that $p\in Z$ acts trivially on $\kappa$. Then $\Hom_K(\sigma, \kappa)\neq 0$ if and only if one of the following holds:
\begin{itemize} 
\item[(i)] $r=0$ and $\kappa$ is isomorphic to one of the following: $\kappa(0, \omega^a)$, $(\mu_{\pm 1}\omega^a )\circ \det$ or 
$\Indu{B}{G}{\mu_{\lambda^{-1}}\omega^a\otimes \mu_{\lambda}\omega^a}$, for $\lambda\in \bar{k}^{\times}\setminus\{\pm 1\}$;
\item[(ii)] $r=p-1$ and $\kappa$ is isomorphic to one of the following: $\kappa(p-1, \omega^a)$,  $\Sp\otimes (\mu_{\pm 1}\omega^a) \circ \det$ or 
$\Indu{B}{G}{\mu_{\lambda^{-1}}\omega^a\otimes \mu_{\lambda}\omega^a}$, for $\lambda\in \bar{k}^{\times}\setminus\{\pm 1\}$;
\item[(iii)] $0<r<p-1$ and $\kappa$ is isomorphic to one of the following: 
$\kappa(r,\omega^a)$ or  $\kappa\cong \Indu{B}{G}{\mu_{\lambda^{-1}}\omega^a\otimes \mu_{\lambda}\omega^{a+r}}$, for $\lambda\in \bar{k}^{\times}$;  
\end{itemize}
\end{lem}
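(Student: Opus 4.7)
Using the Barthel--Livn\'e trichotomy plus Breuil's classification, every $\kappa$ as in the statement falls into one of the four families (1)--(4) recalled above. The condition that $p \in Z$ acts trivially cuts out: in (1) and (2) the characters $\chi = \mu_{\pm 1}\omega^{b}$ (since $\chi(p)^2=1$ and $\chi|_{\Zp^\times}$ factors through $\omega$); in (3) the characters $(\chi_1,\chi_2) = (\mu_{\lambda^{-1}}\omega^{b_1},\mu_\lambda \omega^{b_2})$ for $\lambda\in\bar{k}^\times$; in (4) the supersingulars $\kappa(r',\omega^{b}\mu_{\pm 1})$ for $0\le r'\le p-1$. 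So the proof reduces to computing, for each parametrised $\kappa$, the multiplicity of $\sigma = \Sym^r\bar{k}^2 \otimes \dt^a$ in $\soc_K \kappa$, and then matching the list.

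The character and Steinberg cases are direct. For (1), $\chi\circ\dt$ restricts to $K$ as a $1$-dimensional representation, so $\Hom_K(\sigma,\chi\circ\dt)\neq 0$ forces $r=0$ and $b\equiv a\pmod{p-1}$; with $0\le a<p-1$ this gives exactly $\chi=\mu_{\pm 1}\omega^a$, placing these $\kappa$ in case (i). For (2), use the exact sequence $0\to \Eins \to \Indu{B}{G}{\Eins}\to \Sp\to 0$ and the Iwasawa decomposition to write $\Indu{B}{G}{\Eins}|_K = \Indu{I_0}{K}{\Eins}$, whose $K$-composition factors are $\Eins$ and $\Sym^{p-1}\bar{k}^2$ (Brauer theory for $\GL_2(\Fp)$). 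Thus $\soc_K(\Sp\otimes\chi\circ\dt)=\Sym^{p-1}\bar{k}^2\otimes\chi|_{\Zp^\times}\circ\dt$, forcing $r=p-1$ and $\chi=\mu_{\pm 1}\omega^a$, giving the Steinberg entries of case (ii).

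For the principal series in (3), by Iwasawa and Frobenius reciprocity
$$\Hom_K(\sigma,\Indu{B}{G}{\chi_1\otimes\chi_2}) \;\cong\; \Hom_{I_0}(\sigma|_{I_0},\chi_1\otimes\chi_2)\;\cong\; \Hom_{T\cap K}(\sigma_{U\cap K},\chi_1\otimes\chi_2).$$
The space $\sigma_{U\cap K}$ is $1$-dimensional with $T\cap K$-character $(a,d)\mapsto a^{r+a}d^{a}$, so the condition becomes $\chi_1|_{\Zp^\times}=\omega^a$ and $\chi_2|_{\Zp^\times}=\omega^{a+r}$. With the trivial central-character constraint this forces $\kappa\cong \Indu{B}{G}{\mu_{\lambda^{-1}}\omega^a\otimes\mu_\lambda\omega^{a+r}}$; the irreducibility requirement $\chi_1\neq\chi_2$ is automatic when $0<r<p-1$, whereas for $r\in\{0,p-1\}$ it excludes exactly $\lambda\in\{\pm 1\}$. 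The excluded $\lambda=\pm 1$ degenerations are precisely the characters and Steinberg twists appearing in (i) and (ii), consistent with the reducible $\Indu{B}{G}{\mu_{\pm 1}\omega^a\otimes\mu_{\pm 1}\omega^a}$ having these as constituents.

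Finally, for the supersingular $\kappa(r',\eta)$ with $\eta|_{\Zp^\times}=\omega^b$ and $\eta(p)=\pm 1$, the key input is that $\soc_K\kappa(r',\eta)$ contains the weight $\Sym^{r'}\bar{k}^2\otimes\omega^b\circ\dt$ coming from the compact induction $\cIndu{KZ}{G}{\Sym^{r'}\bar{k}^2\otimes\eta\circ\dt}$, and by the intertwinings \eqref{intertwine} also the weight $\Sym^{p-1-r'}\bar{k}^2\otimes\omega^{b+r'}\circ\dt$; a standard argument (using that $\kappa(r',\eta)$ has length one and $\cIndu{KZ}{G}{\sigma}\to \kappa(r',\eta)$ is surjective with $T$ in the kernel) shows these are the only $K$-weights in the socle. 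Matching with $\sigma=\Sym^r\otimes\dt^a$ yields either $(r',b)=(r,a)$ or $(r',b)=(p-1-r,a+r)$; both give, after applying \eqref{intertwine} and the $\mu_{-1}$-invariance, the single representation $\kappa(r,\omega^a)$. Assembling these four contributions under the cases $r=0$, $r=p-1$, $0<r<p-1$ produces exactly the three lists in (i)--(iii), and the converse (that each listed $\kappa$ does contain $\sigma$) is built into the constructions used. The main obstacle is keeping a consistent convention for the highest/lowest weight under Frobenius reciprocity in the principal-series step, and then using \eqref{intertwine} with care to avoid double-counting supersingulars.
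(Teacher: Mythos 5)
The paper offers no proof here (it simply says ``This is well known''), so there is no proof to compare against; you are supplying an argument from scratch. Your overall strategy — run through the Barthel--Livn\'e/Breuil classification, impose the trivial action of $p$ via the central character, and compute the $K$-socle family by family using Iwasawa decomposition and Frobenius reciprocity for the principal series and the known $K$-socle for the supersingulars — is sound and does yield the lemma. The supersingular bookkeeping using \eqref{intertwine} is handled correctly, and the matching of the exceptional $\lambda=\pm 1$ cases with the character and twisted Steinberg entries is the right way to organize the answer.

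Two points need repair. First, in the Steinberg step, the claim that $\Indu{B}{G}{\Eins}|_K=\Indu{B\cap K}{K}{\Eins}$ has ``$K$-composition factors $\Eins$ and $\Sym^{p-1}\bar{k}^2$'' is false as stated: this restriction is infinite dimensional and has infinitely many $K$-composition factors. What you mean is that its $K_1$-fixed vectors form $\Indu{B(\Fp)}{\GL_2(\Fp)}{\Eins}=\Eins\oplus\Sym^{p-1}\bar{k}^2$, so the $K$-socle of $\Indu{B}{G}{\Eins}$ is $\Eins\oplus\Sym^{p-1}$; since $\Sp^{I_1}$ is one-dimensional, $\soc_K\Sp$ is irreducible and hence equals $\Sym^{p-1}\bar{k}^2$. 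The conclusion is correct, but the stated reason is not. Second, in the principal series step the $T\cap K$-character of $\sigma_{U\cap K}$ is written as $(a,d)\mapsto a^{r+a}d^a$, which with the convention $\chi_1\otimes\chi_2\colon (a,d)\mapsto \chi_1(a)\chi_2(d)$ would force $\chi_1|_{\Zp^\times}=\omega^{a+r}$ and $\chi_2|_{\Zp^\times}=\omega^a$, contradicting the conclusion you draw. In fact the $U\cap K$-coinvariants of $\Sym^r\bar{k}^2\otimes\det^a$ carry the character $(a,d)\mapsto a^a d^{r+a}$ (lowest weight $e_2^r$, not highest), which does give $\chi_1|_{\Zp^\times}=\omega^a$, $\chi_2|_{\Zp^\times}=\omega^{a+r}$ as needed and as recorded in Lemma~\ref{addsoc} of the paper. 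So the conclusion is right, but the weight you exhibit is the wrong one. Both are easy fixes, but as written they would mislead a reader checking the argument line by line.
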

\begin{proof} This is well known. 
\end{proof}

\begin{remar} Since $k_L$ contains $\mathbb F_{p}$  every irreducible $k_L$-representation of $K$ is absolutely irreducible.
Lemma \ref{ratsoc} allows us to use Lemma \ref{socleK} with $k_L$ instead of $\bar{k}$.
\end{remar} 

\begin{lem}\label{addsoc} Let $\psi_1$, $\psi_2:\Qp^{\times}\rightarrow k_L^{\times}$ be smooth characters. 
Suppose that $\Indu{B}{G}{\psi\otimes\psi_2}$ has an irreducible subquotient $\kappa$ with $\soc_K \kappa\cong 
\Sym^r k_L^2 \otimes \det^a$, with $0\le r\le p-1$, $0\le a<p-1$, then
$(\psi_1|_{\Zp^{\times}}, \psi_2|_{\Zp^{\times}})=(\omega^a, \omega^{a+r})$.
\end{lem}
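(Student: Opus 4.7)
The plan is to distinguish cases according to the Jordan--H\"older structure of $\Indu{B}{G}{\psi_1 \otimes \psi_2}$. By the Barthel--Livn\'e classification recalled at the start of \S\ref{FequalsQp}, the principal series is irreducible when $\psi_1 \ne \psi_2$, while if $\psi_1 = \psi_2 = \psi$ its two irreducible subquotients are $\psi \circ \det$ and $\Sp \otimes \psi \circ \det$. In each case I compute $\soc_K \kappa$ directly and match it against $\Sym^r k_L^2 \otimes \det^a$ in order to read off $\psi_1|_{\Zp^{\times}}$ and $\psi_2|_{\Zp^{\times}}$.

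First I would dispose of the reducible case $\psi_1 = \psi_2 = \psi$. If $\kappa \cong \psi \circ \det$ then $\kappa|_K$ is one-dimensional, so the hypothesis forces $r = 0$ and $\psi|_{\Zp^{\times}} = \omega^a$. If instead $\kappa \cong \Sp \otimes \psi \circ \det$, then $\Sp|_K \cong \Sym^{p-1} k_L^2$ (immediate from the short exact sequence $0 \to \Eins \to \Ind_{\bar B}^{\GL_2(\Fp)} \Eins \to \Sym^{p-1} k_L^2 \to 0$ inflated to $K$), so $\kappa|_K \cong \Sym^{p-1} k_L^2 \otimes \psi|_{\Zp^{\times}} \circ \det$ is irreducible and matching gives $r = p-1$ and $\psi|_{\Zp^{\times}} = \omega^a$. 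In both subcases $(\psi_1, \psi_2)|_{\Zp^{\times}} = (\omega^a, \omega^a) = (\omega^a, \omega^{a+r})$, the last equality using $\omega^{p-1} = \Eins$ when $r = p-1$.

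The substantive case is $\psi_1 \ne \psi_2$, where $\kappa$ is the principal series itself. By Iwasawa, $G = BK$, so Mackey's formula collapses to $\kappa|_K \cong \Ind_{B \cap K}^K (\psi_1 \otimes \psi_2)|_{B \cap K}$, and Frobenius reciprocity gives
$$\Hom_K\bigl(\Sym^r k_L^2 \otimes \det^a,\ \kappa\bigr) \cong \Hom_{B \cap K}\bigl(\Sym^r k_L^2 \otimes \det^a,\ (\psi_1 \otimes \psi_2)|_{B \cap K}\bigr).$$
Non-vanishing forces $(\psi_1 \otimes \psi_2)|_{B \cap K_1}$ to be trivial (the source factors through $K/K_1$), so $\psi_1, \psi_2$ are tame on $\Zp^{\times}$ and the right-hand side reduces to $\Hom_{\bar B}(\Sym^r k_L^2 \otimes \det^a,\ \bar\psi_1 \otimes \bar\psi_2)$ over the upper Borel $\bar B$ of $\GL_2(\Fp)$. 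To finish I identify the unique one-dimensional $\bar B$-quotient of $\Sym^r k_L^2 \otimes \det^a$: its $\bar U$-coinvariants are one-dimensional, spanned by the image of $Y^r$, on which $\bar T$ acts by $\bigl(\begin{smallmatrix}\alpha & 0 \\ 0 & \delta\end{smallmatrix}\bigr) \mapsto \alpha^a \delta^{a+r}$, so matching with $\bar\psi_1 \otimes \bar\psi_2$ pins down $\psi_1|_{\Zp^{\times}} = \omega^a$ and $\psi_2|_{\Zp^{\times}} = \omega^{a+r}$. The main obstacle is precisely this cosocle computation: it is crucial to use the $\bar B$-cosocle (represented by $Y^r$) rather than the $\bar B$-socle (spanned by the highest weight vector $X^r$), and this asymmetry is exactly what produces the answer $(\omega^a, \omega^{a+r})$ rather than its swap.
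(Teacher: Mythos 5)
Your treatment of the irreducible case ($\psi_1\neq\psi_2$) is correct and is genuinely more explicit than the paper's proof, which simply invokes its own Lemma \ref{socleK} together with Barthel--Livn\'e: the Iwasawa decomposition, Frobenius reciprocity for $\Ind_{B\cap K}^K$, and the uniseriality of $\Sym^r k_L^2$ as a $\bar B$-module (so that its unique one-dimensional quotient is the $\bar U$-coinvariants, of $\bar T$-weight $\alpha^a\delta^{a+r}$ after the $\det^a$ twist) do pin down $(\psi_1|_{\Zp^{\times}},\psi_2|_{\Zp^{\times}})=(\omega^a,\omega^{a+r})$, and your warning to use the $\bar B$-cosocle rather than the $\bar B$-socle is exactly the point at which a naive version of the argument would fail.

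The reducible case, however, contains a genuine error. You assert $\Sp|_K\cong\Sym^{p-1}k_L^2$, justified by inflating the finite-group sequence $0\to\Eins\to\Ind_{\bar B}^{\GL_2(\Fp)}\Eins\to\Sym^{p-1}k_L^2\to 0$ to $K$. But $\Sp$ here is the Steinberg representation of $G=\GL_2(\Qp)$, defined by $0\to\Eins\to\Indu{B}{G}{\Eins}\to\Sp\to 0$ with $B$ the Borel of $G$; it is \emph{not} the inflation of the finite Steinberg of $\GL_2(\Fp)$, which the paper denotes $\St$. Restricting $\Indu{B}{G}{\Eins}$ to $K$ via Iwasawa yields the locally constant functions on $\mathbb{P}^1(\Qp)$, so $\Sp|_K$ is infinite-dimensional and, a fortiori, not irreducible and not $\Sym^{p-1}k_L^2$. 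What your argument actually needs is only the socle statement $\soc_K\Sp\cong\Sym^{p-1}k_L^2$. That is true, but it does not follow from the inflated finite-group sequence; it requires a separate justification, for instance Barthel--Livn\'e or the paper's Lemma \ref{socleK}(ii) (which is also what the paper's own proof tacitly relies on after citing Barthel--Livn\'e Thm 30(1)). With that substitution in place of the false isomorphism $\Sp|_K\cong\Sym^{p-1}k_L^2$, the case split and the final matching $(\psi_1|_{\Zp^{\times}},\psi_2|_{\Zp^{\times}})=(\omega^a,\omega^a)=(\omega^a,\omega^{a+r})$, using $\omega^{p-1}=\Eins$ when $r=p-1$, go through.
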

\begin{proof} If $\Indu{B}{G}{\psi_1\otimes\psi_2}$ is irreducible then the assertion follows from Lemma \ref{socleK} and 
\cite[Thm 34 (2)]{bl}. If  $\Indu{B}{G}{\psi_1\otimes\psi_2}$ is reducible, then $\psi_1=\psi_2$, and 
$(\Indu{B}{G}{\psi_1\otimes\psi_2})^{ss}\cong \psi_1\circ \det \oplus \Sp\otimes\psi_1\circ \det$,
\cite[Thm 30 (1)]{bl}. Hence, $r=p-1$ or $r=0$ and so $(\psi_1|_{\Zp^{\times}}, \psi_2|_{\Zp^{\times}})=(\omega^a, \omega^a)$.
\end{proof}

Let $\theta_1, \theta_2: \Zp^{\times}\rightarrow L^{\times}$ be smooth characters. If $\theta_1=\theta_2$ we set $c(\theta_1, \theta_2):=0$, 
$J_c:=K$ and $\tau(\theta_1, \theta_2):=\theta_1\circ \det$. If $\theta_1\neq \theta_2$ let $c(\theta, \theta_2)\ge 1$ be the smallest integer $c$, 
such that $\theta_1\theta_2^{-1}$ is trivial on $1+p^c \Zp$.  We  set 
$$J_c:=\begin{pmatrix} \Zp^{\times} & \Zp\\ p^c \Zp & \Zp^{\times}\end{pmatrix},$$
and we consider $\theta_1\otimes \theta_2$ as a character of $J_{c}$, 
by 
$$\theta_1\otimes \theta_2( \begin{pmatrix} a & b \\ c & d\end{pmatrix}):=\theta_1(a) \theta_2(d).$$ 
Set $\tau(\theta_1,\theta_2):=\Indu{J_{c}}{K}{\theta_1\otimes\theta_2}$. If $\theta_1\neq \theta_2$ then $\tau(\theta_1, \theta_2)$ is a type for the 
Bernstein component containing representations $\Indu{B}{G}{\chi_1\otimes\chi_2}$ with $\chi_1|_{\Zp^{\times}}=\theta_1$ and $\chi_2|_{\Zp^{\times}}=\theta_2$, 
see \cite[A.2.2]{henniart}. If $\theta_1=\theta_2$ then $\tau(\theta_1, \theta_2)$ is typical for the Bernstein component containing $\chi\circ \det$, with 
$\chi|_{\Zp^{\times}}=\theta_1=\theta_2$.

\begin{thm}\label{Qp} Let $\sigma:=\Sym^r k_L^2 \otimes \det^a$ with $0\le r\le p-1$ and $0\le a <p-1$. And let 
$\kappa$ be an 
absolutely irreducible smooth $k_L$-representation with a central character, such that $\Hom_K(\sigma, \kappa)\neq 0$ and $p\in Z$ acts trivially on $\kappa$. 
Fix an integer $k\ge 2$ and smooth characters 
$\theta_1, \theta_2:\Zp^{\times}\rightarrow L^{\times}$. Let $M$ be a $K$-stable lattice in $\tau(\theta_1, \theta_2)\otimes\Sym^{k-2} L^2$.
Suppose that $\sigma$ is a subquotient of $(M\otimes_A k_L)^{ss}$ then there exists a finite extension $L'$ of $L$,  smooth characters 
$\chi_1, \chi_2:\Qp^{\times}\rightarrow (L')^{\times}$ and an admissible unitary completion $E$ of $(\Indu{B}{G}{\chi_1\otimes \chi_2|\centerdot|^{-1}})\otimes 
\Sym^{k-2}(L')^2$ such that the following hold:
\begin{itemize}
\item[(1)] $\chi_1\neq \chi_2$;
\item[(2)] $\chi_1|_{\Zp^{\times}}=\theta_1$, $\chi_2|_{\Zp^{\times}}=\theta_2$;
\item[(3)] $\chi_1(p)\chi_2(p)p^{k-1}=1$;
\item[(4)] $\val(\chi_1(p))\le 0$, $\val(\chi_2(p))\le 0$;
\item[(5)] $\Hom_G(\kappa, E^0\otimes_{A'} k')\neq 0$.
\end{itemize}
Moreover, if we assume that $\kappa$ is not a subquotient of any principal series representation $\Indu{B}{G}{\psi_1\otimes\psi_2}$, 
such that  $(\psi_1|_{\Zp^{\times}}, \psi_2|_{\Zp^{\times}})=(\bar{\theta}_1, \bar{\theta}_2 \omega^{k-2})$ or 
$(\psi_1|_{\Zp^{\times}}, \psi_2|_{\Zp^{\times}})=(\bar{\theta}_2, \bar{\theta}_1 \omega^{k-2})$, then $\val(\chi_1(p))<0$ and $\val(\chi_2(p))<0$.
\end{thm}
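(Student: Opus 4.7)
The strategy is to feed the hypotheses into Corollary~\ref{cor1} when $\theta_1\neq\theta_2$ and into Corollary~\ref{cor2} when $\theta_1=\theta_2$, read off (1)--(5) from Lemma~\ref{reform} and the type theory, and finally obtain the strict inequalities in the moreover statement from Lemma~\ref{ordinary}. In the first case $\tau(\theta_1,\theta_2)$ is an absolutely irreducible type for the Bernstein component whose irreducible objects are $\Indu{B}{G}{\chi_1\otimes\chi_2|\centerdot|^{-1}}$ with $\{\chi_1|_{\Zp^{\times}},\chi_2|_{\Zp^{\times}}\}=\{\theta_1,\theta_2\}$, so Corollary~\ref{cor1} applied with $W:=\Sym^{k-2}L^2$ produces the required $L'$, an absolutely irreducible $\pi$ in this block, and an admissible unitary completion $E$ of $\pi\otimes_{L'}W_{L'}$ with $\Hom_G(\kappa_{k'},E^0\otimes_{A'}k')\neq 0$; after relabeling we obtain (2), and $\theta_1\neq\theta_2$ forces $\chi_1\neq\chi_2$, so (1) and (5) follow. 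In the second case, fix an extension $\tilde\theta:\Qp^{\times}\to L^{\times}$ of $\theta:=\theta_1=\theta_2$ and apply Corollary~\ref{cor2} to the twisted weight $W':=\Sym^{k-2}L^2\otimes\tilde\theta\circ\det$; the given lattice $M$ transports (since $\tilde\theta$ agrees with $\theta$ on $\det(K)=\Zp^{\times}$) to a $K$-lattice in $W'$ with the same reduction, preserving the hypothesis on $\sigma$. The resulting $E$ contains $\pi_0\otimes W'_{L'}=\pi\otimes W_{L'}$ as a dense subspace, where $\pi:=\pi_0\otimes\tilde\theta\circ\det$ is the principal series of the desired form, yielding (2) and (5); irreducibility of $\pi_0$ (so $\chi_1\neq\chi_2$) follows from the proof of Corollary~\ref{principalseries} as soon as $W'$ is not a character, i.e.\ once $k\ge 3$.

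Conditions (3) and (4) are an immediate application of Lemma~\ref{reform} with $F=\Qp$ (so $e=1$), $r_\sigma=k-2$ and $\eta$ trivial: the construction via Theorem~\ref{liftomega} makes $p$ act trivially on $E$, hence on the dense subspace $\pi\otimes W$, so the last assertion of Lemma~\ref{reform} gives $\chi_1(p)\chi_2(p)=p^{1-k}$, which is (3), and part (ii) gives $\val(\chi_i(p))\le 0$, which is (4).

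For the moreover statement, recall that by construction $E^0\otimes_A k\hookrightarrow\Omega$, where $\Omega$ is the smooth representation from \cite{bp} with $\soc_G\Omega\cong\kappa$; since $\kappa\subseteq E^0\otimes_A k$ this forces $\soc_G(E^0\otimes_A k)\cong\kappa$ as well. If $\val(\chi_1(p))=0$, Lemma~\ref{ordinary} (with $\eta=1$, so that $\val(\lambda_1)=\val(\eta(p))=0$) yields a non-zero $G$-equivariant morphism $\Indu{B}{G}{\psi_1\otimes\psi_2}\to E^0\otimes_A k$ with $(\psi_1|_{\Zp^{\times}},\psi_2|_{\Zp^{\times}})=(\bar\theta_1,\bar\theta_2\omega^{k-2})$; its image is a non-zero subrepresentation, hence meets the socle $\kappa$, and irreducibility of $\kappa$ forces $\kappa$ to embed in the image, exhibiting $\kappa$ as a subquotient of $\Indu{B}{G}{\psi_1\otimes\psi_2}$ and contradicting the hypothesis. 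The case $\val(\chi_2(p))=0$ is handled by running the whole argument with the ordered pair $(\theta_2,\theta_1)$ in place of $(\theta_1,\theta_2)$: this produces the same Bernstein block and, up to intertwining, the same $\pi$, but with the roles of $\chi_1$ and $\chi_2$ swapped, so the same application of Lemma~\ref{ordinary} now yields the second forbidden pair $(\bar\theta_2,\bar\theta_1\omega^{k-2})$.

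The main obstacle I expect is precisely this symmetric treatment in the moreover part: Lemma~\ref{ordinary} is asymmetric in $\chi_1$ and $\chi_2$ because its hypothesis $\val(\lambda_1)=\val(\eta(p))$ can only be satisfied for one character at a time under a unitary twist, and one must verify that swapping the labels really does deliver the second forbidden pair via the intertwining isomorphism. A secondary complication is the degenerate case $k=2$ with $\theta_1=\theta_2$ and $\kappa$ finite-dimensional, where $W$ in Corollary~\ref{cor2} is itself a character and irreducibility of $\pi_0$ is not automatic; here one has to produce the desired $\pi$ by an explicit choice of unramified twist, using Lemma~\ref{latticefin} to rule out the reducible alternatives.
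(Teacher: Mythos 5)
Your overall strategy is the same as the paper's: apply Corollaries~\ref{cor1}/\ref{cor2} to produce an admissible unitary completion $E$ of $\pi\otimes\Sym^{k-2}L^2$ with $\pi$ a principal series, deduce (2), (3), (5) from the corollaries and the Bernstein block, read (4) off Lemma~\ref{reform}, and derive the strict inequalities in the moreover statement from Lemma~\ref{ordinary}. That part, including the socle argument $\soc_G\Omega\cong\kappa\Rightarrow\soc_G(E^0\otimes_A k)\cong\kappa$ and the use of the intertwining isomorphism to treat $\val(\chi_2(p))=0$, is sound and matches the paper.

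The gap is your handling of condition (1) in the case $\theta_1=\theta_2$. You assert that irreducibility of $\pi_0$ ``(so $\chi_1\neq\chi_2$)'' follows from the proof of Corollary~\ref{principalseries}. This implication is false: an unramified principal series $\Indu{B}{G}{\chi\otimes\chi|\centerdot|^{-1}}$ with $\chi$ unramified is irreducible whenever $\chi\neq\chi|\centerdot|^{\pm1}$, which is automatic, so irreducibility of $\pi_0$ does not preclude $\chi_1=\chi_2$. Concretely, the Hecke action in Corollary~\ref{principalseries} produces $\pi_0=\cIndu{KZ}{G}{\tilde\Eins_{L'}}/(T-a)$ for a root $a$ of some polynomial, and the associated pair of unramified characters $(\chi_1^{(0)},\chi_2^{(0)})$ is obtained by factoring a quadratic whose discriminant can vanish; nothing in that argument, nor in Lemma~\ref{latticefin}, rules out a repeated root. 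The paper treats this degenerate case (which can only occur when $\theta_1=\theta_2$, exactly as you observed) by invoking \cite[Cor.~4.5]{except}: given the completion $E$ with $\chi_1=\chi_2$, one replaces it by an admissible unitary completion $E_x$ of $(\Indu{B}{G}{\chi_1\delta_x\otimes\chi_2\delta_{x^{-1}}|\centerdot|^{-1}})\otimes\Sym^{k-2}L^2$ for a suitable unramified $\delta_x$ with $\delta_x(p)=x\in 1+\MM$, $x^2\neq 1$, with the property that $E_x^0\otimes_A k_L\cong E^0\otimes_A k_L$. This deformation preserves (2), (3), (5), and (4), and breaks the equality $\chi_1=\chi_2$. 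Without some such deformation result, your proof does not establish (1) when $\theta_1=\theta_2$; and the ``secondary complication'' at $k=2$ you flag, while real, is a separate and smaller issue which the same deformation handles uniformly.
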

 
\begin{proof} Corollaries \ref{cor1} and \ref{cor2} give us an extension $L'$ of $L$, an $L'$-principal series representation 
$\pi:=\Indu{B}{G}{\chi_1\otimes\chi_2|\centerdot|^{-1}}$, such that $\chi_1$, $\chi_2$ satisfy (2), (3) and an admissible unitary completion $E$ of 
$\pi\otimes \Sym^{k-2} (L')^2$, satisfying (5). For simplicity we assume that $L=L'$. Assume that $\chi_1=\chi_2$, then \cite[Cor 4.5]{except} says that 
there exists $x\in 1+\MM$, $x^2\neq 1$ and an admissible unitary completion $E_x$ of  
$$(\Indu{B}{G}{\chi_1\delta_x\otimes\chi_2\delta_{x^{-1}}|\centerdot|^{-1}})\otimes \Sym^{k-2} L^2,$$
such that $E^0_x \otimes_A k_L\cong E^0\otimes_A k_L$ as $G$-representations, where $\delta_x: \Qp^{\times}\rightarrow L^{\times}$ is an unramified 
character with $\delta_x(p)=x$. Hence, we may assume that $\chi_1\neq \chi_2$. The condition (4) follows from Lemma \ref{reform}. 
The last part follows from Lemma \ref{ordinary}.
\end{proof}

We use the results of Berger-Breuil \cite{bergerbreuil} and Berger \cite{berger} to transfer the statement of Theorem \ref{Qp} to the Galois side. 
Let $\GG_{\Qp}$ be the absolute Galois group of $\Qp$, and let $\II_{\Qp}$ be the inertia subgroup. We consider characters of $\Qp^{\times}$ as characters 
of $\GG_{\Qp}$ via class field theory, sending the geometric Frobenius to $p$, with this identification $\omega$ is the reduction modulo $p$ of 
the cyclotomic character. By a $2$-dimensional $L$-linear representation of $\GG_{\Qp}$ we 
mean a continuous group homomorphism $\GG_{\Qp}\rightarrow \GL_2(L)$, where $\GL_2(L)$ is equipped with the $p$-adic topology inherited from $L$.
Since $\GG_{\Qp}$ is compact, it will stabilize some $A$-lattice $T$ in $V$. Now $(T\otimes_A k_L)^{ss}$ does not depend on the choice of 
the lattice $T$, we denote this $k_L$-representation by $\overline{V}$. Given an integer $1\le s\le p$, we denote by $\ind \omega_2^s$ the unique 
$2$-dimensional $k_L$-representation $\rho$ of $\GG_{\Qp}$ such that $\det\rho=\omega^s$ and $\rho|_{\II_{\Qp}}\cong \omega_2^s \oplus \omega_2^{ps}$, where
$\omega_2$ is the fundamental character of level $2$, then $\ind \omega_2^s$ is absolutely irreducible, and any absolutely irreducible $2$-dimensional
$k_L$-representation of $\GG_{\Qp}$ is isomorphic to a twist of $\ind \omega_2^s$, for some $1\le s \le p$. 

 Recall that a representation $V$ of $\mathcal G_{\Qp}$ is crystabelline if it becomes crystalline after restriction to $\Gal(\Qbar/E)$, where $E$ 
is an abelian extension of $\Qp$. Absolutely irreducible $L$-linear $2$-dimensional crystabelline representations of $\mathcal G_{\Qp}$  
with Hodge-Tate weights $(0,k-1)$, ($k\ge 2$) can be parameterized by pairs of smooth characters $\alpha, \beta:\Qp^{\times} \rightarrow L^{\times}$, such that 
$-(k-1)< \val(\alpha(p))\le \val(\beta(p))<0$ and $\val(\alpha(p))+\val(\beta(p))=-(k-1)$, see \cite[Prop 2.4.5]{bergerbreuil} or 
\cite[\S 5.5]{colmez04b}. We denote by $V(\alpha, \beta)$ the unique crystabelline representation $V$, such that 
$D_{cris}(V)=D(\alpha, \beta)$, where $D(\alpha, \beta)$ is the  filtered 
admissible $L$-linear $(\varphi, \mathcal G_{\Qp})$-module defined in \cite[Def 2.4.4]{bergerbreuil}.

\begin{thm}\label{mainQp} Fix an integer $k\ge 2$ and smooth characters $\theta_1, \theta_2: \Zp^{\times}\rightarrow L^{\times}$. Let $M$ be a $K$-stable 
lattice in $\tau(\theta_1, \theta_2)\otimes \Sym^{k-2} L^2$. Suppose that $\sigma:=\Sym^r k_L^2 \otimes \det^a$ with $0\le r\le p-1$ 
and $0\le a<p-1$ is a subquotient of $M\otimes_A k_L$. Let $\rho$ be one of the following: 
\begin{itemize}
\item[(a)] $\rho=(\ind \omega_2^{r+1} )\otimes \omega^a$;
\item[(b)] if $(\omega^{r+1+a}\oplus \omega^a)|_{\II_{\Qp}}$ is not isomorphic to either $\bar{\theta}_1\oplus \bar{\theta_2}\omega^{k-1}$ or 
$\bar{\theta}_2\oplus \bar{\theta_1}\omega^{k-1}$ then let $\rho=\mu_{\lambda}\omega^{r+1+a} \oplus \mu_{\lambda^{-1}}\omega^a$, 
for any $\lambda\in k_L^{\times}$.
\end{itemize} 
Then there exists a finite extension $L'$ of $L$ and an absolutely irreducible crystabelline $L'$-representation 
$V:=V(\alpha, \beta)$ such that the following hold:
\begin{itemize}
\item[(1)] $\overline{V}\cong \rho$;
\item[(2)] $\alpha(p)\beta(p)p^{k-1}=1$;
\item[(3)] the Hodge-Tate weights of $V$ are $0$ and $k-1$;
\item[(4)] either ($\alpha|_{\Zp^{\times}}=\theta_1$ and $\beta|_{\Zp^{\times}}=\theta_2$) or 
($\alpha|_{\Zp^{\times}}=\theta_2$ and $\beta|_{\Zp^{\times}}=\theta_1$).
\end{itemize}
\end{thm}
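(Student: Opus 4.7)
The plan is to combine Theorem \ref{Qp} with the Berger--Breuil construction of the $p$-adic local Langlands correspondence. I would pick $\kappa$ realising the mod $p$ semisimple Langlands correspondent of $\rho$, apply Theorem \ref{Qp} to produce an admissible unitary completion $E$ of a locally algebraic principal series, and then recognise $E$ as $B(V(\alpha,\beta))$ for an explicit crystabelline $V(\alpha,\beta)$ whose reduction mod $p$ is $\rho$.

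For the choice of $\kappa$: in case (a), take $\kappa := \kappa(r, \omega^a)$, which is supersingular; in case (b), take $\kappa$ to be the irreducible principal series from Lemma \ref{socleK}(iii) indexed by $\lambda$, with the boundary cases $r = 0$ and $r = p-1$ handled via (i), (ii) of the same lemma. In all cases $p$ acts trivially and $\Hom_K(\sigma, \kappa) \neq 0$. Since $\sigma$ is a subquotient of $M \otimes_A k_L$ by hypothesis, Theorem \ref{Qp} gives a finite extension $L'/L$, smooth characters $\chi_1 \neq \chi_2 : \Qp^{\times} \to (L')^{\times}$ with $\chi_1(p) \chi_2(p) p^{k-1} = 1$, $\val(\chi_i(p)) \le 0$ and $(\chi_1|_{\Zp^{\times}}, \chi_2|_{\Zp^{\times}}) = (\theta_1, \theta_2)$ (up to transposition), together with an admissible unitary completion $E$ of $(\Ind_B^G \chi_1 \otimes \chi_2 |\centerdot|^{-1}) \otimes \Sym^{k-2} (L')^2$ such that $\Hom_G(\kappa, E^0 \otimes_{A'} k') \neq 0$. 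I then upgrade to strict valuations $\val(\chi_i(p)) < 0$ using the last clause of Theorem \ref{Qp}: in case (a), $\kappa$ is supersingular and hence is not a subquotient of any principal series; in case (b), Lemma \ref{addsoc} together with the exclusion imposed on $\rho$ rules out $\kappa$ being a subquotient of the forbidden principal series types, since otherwise $(\omega^{r+1+a} \oplus \omega^a)|_{\II_{\Qp}}$ would match one of the excluded direct sums.

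Now I set $\alpha := \chi_1$ and $\beta := \chi_2$. The relations from Theorem \ref{Qp} translate to $\val(\alpha(p)) + \val(\beta(p)) = -(k-1)$ together with $-(k-1) < \val(\alpha(p)), \val(\beta(p)) < 0$, and after reordering one can arrange $\val(\alpha(p)) \le \val(\beta(p))$. Hence by \cite[Prop.~2.4.5]{bergerbreuil} the filtered $(\varphi, \GG_{\Qp})$-module $D(\alpha, \beta)$ is admissible and defines an absolutely irreducible crystabelline $L'$-representation $V(\alpha, \beta)$ with Hodge--Tate weights $(0, k-1)$; this gives (2), (3), (4). For (1), I invoke the results of Berger--Breuil: the universal admissible unitary completion of this locally algebraic principal series is topologically irreducible and coincides with their Banach space representation $B(V(\alpha, \beta))$, so $E \cong B(V(\alpha, \beta))$. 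Using Berger's computation of the semisimple reduction modulo $p$ of $B(V(\alpha, \beta))$, the non-vanishing $\Hom_G(\kappa, E^0 \otimes_{A'} k') \neq 0$ then forces $\overline{V(\alpha,\beta)}$ to be the Galois representation attached to $\kappa$ under the semisimple mod $p$ Langlands correspondence, which by construction is $\rho$.

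The main obstacle is this final matching step: $E^0 \otimes_{A'} k'$ may a priori possess several irreducible $G$-subquotients corresponding to different possible semisimplifications of $\overline{V(\alpha,\beta)}$, and one must verify that the specific $\kappa$ singled out by our construction is the one matching $\rho$. The strict-valuation refinement of the previous paragraph is precisely what excludes the ordinary subquotients and thus pins down $\overline{V(\alpha,\beta)}$ uniquely; the borderline equal-eigenvalue case $\theta_1 = \theta_2$ additionally needs the results of \cite{except}, which have already been incorporated into Theorem \ref{Qp}.
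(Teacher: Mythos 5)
Your proposal is correct and follows essentially the same route as the paper: identify $\kappa$ from Lemma \ref{socleK} according to case (a) or (b), feed it into Theorem \ref{Qp} to get the completion, use the exclusion hypothesis (via Lemma \ref{addsoc}) to upgrade to strict valuations $\val(\chi_i(p))<0$, then invoke Berger--Breuil and Berger to identify $E\cong B(V(\alpha,\beta))$ and match $\overline{V}\cong\rho$. The one step you pass over is the boundary sub-case $\chi_1=\chi_2|\centerdot|^{-1}$, where the smooth principal series is reducible and the intertwining operator you implicitly use to reorder $\alpha,\beta$ is unavailable; the paper handles it by noting (Lemma \ref{latticefin}) that it forces $k=2$ and then appealing to Berger--Li--Zhu directly, though in fact once one has $\val(\chi_i(p))<0$ and $\chi_1(p)\chi_2(p)p^{k-1}=1$ this sub-case gives $\val(\chi_1(p))=0$, so it never actually occurs.
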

\begin{proof} In case (a) set $\kappa:=\kappa(r,\omega^a)$. In case (b) if  
$(r,\lambda)=(0,\pm 1)$ then set $\kappa:=(\mu_{\pm 1}\omega^a )\circ \det$; if $(r,\lambda)=(p-1,\pm 1)$, then 
set $\kappa:=\Sp\otimes (\mu_{\pm 1}\omega^a) \circ \det$; otherwise set $\kappa:=\Indu{B}{G}{\mu_{\lambda^{-1}}\omega^a\otimes \mu_{\lambda}\omega^{a+r}}$. Lemma \ref{rational} implies that $\kappa$ can be realized 
over $k_L$. Moreover, it follows from Lemma \ref{socleK} that $\Hom_K(\sigma, \kappa)\neq 0$. 
Theorem \ref{Qp} gives an admissible unitary 
completion $E$ of $\Indu{B}{G}{\chi_1\otimes \chi_2|\centerdot|^{-1}}\otimes \Sym^{k-2} (L')^2$ with $\chi_1$, $\chi_2$ and $E$ satisfying 
conditions (1)-(5) of Theorem \ref{Qp}. We note that $\kappa$ is not a subquotient of $\Indu{B}{G}{\psi_1\otimes\psi_2}$, 
with $(\psi_1|_{\Zp^{\times}}, \psi_2|_{\Zp^{\times}})=(\bar{\theta}_1, \bar{\theta}_2 \omega^{k-2})$ and 
$(\psi_1|_{\Zp^{\times}}, \psi_2|_{\Zp^{\times}})=(\bar{\theta}_2, \bar{\theta}_1 \omega^{k-2})$. In case (a) 
this is automatic, since $\kappa$ is 
supersingular hence not a subquotient of any principal series, and in case (b) this follows from the assumption and 
Lemma \ref{addsoc}. In particular, we have $\val(\chi_1(p))<0$ 
and $\val(\chi_2(p))<0$. If $\val(\chi_1(p))\le \val(\chi_2(p))$ then set $\alpha:=\chi_1$ and $\beta:=\chi_2$, 
otherwise set 
$\alpha:=\chi_2$ and $\beta:=\chi_1$, so that $\val(\alpha(p))\le \val(\beta(p))$. 

If $\chi_1=\chi_2|\centerdot|$, then $\chi_1(p)=\chi_2(p)p^{-1}$, and so $\val(\chi_1(p))<\val (\chi_2(p))$. 
If $\chi_1=\chi_2|\centerdot|^{-1}$ then the representation $\Indu{B}{G}{\chi_1\otimes\chi_2|\centerdot|^{-1}}$ has $\chi_1\circ \det$ as 
a subobject. Hence, $\Sym^{k-2} (L')^2 \otimes \chi_1\circ \det$ admits a $G$-invariant lattice, which implies $k=2$. In particular, 
$\theta_1=\theta_2$, $r=0$  and $\omega^a=\bar{\theta}_1$. The assumption in (b), implies that we are in case (a), so that $\rho\cong 
(\ind \omega_2)\otimes \omega^a$. Since $k=2$ and $p>2$ it follows from \cite{bergerlizhu} that if  $V=V(\chi_1|\centerdot|^{-1}, \chi_1)$ then 
$\overline{V}\cong \rho$, see the example below. Assume that $\chi_1\neq \chi_2|\centerdot|^{\pm 1}$ 
then we have an isomorphism
$$\Indu{B}{G}{\chi_1\otimes\chi_2|\centerdot|^{-1}}\cong  \Indu{B}{G}{\chi_2\otimes\chi_1|\centerdot|^{-1}}.$$
So without loss of generality we may assume that $E$ is a unitary admissible completion of 
$\pi\otimes\Sym^{k-2} L^2$, $\pi:=\Indu{B}{G}{\alpha\otimes \beta|\centerdot|^{-1}}$ with 
\begin{itemize}
\item[(i)] $\alpha\neq \beta$;
\item[(ii)] $\alpha(p)\beta(p)p^{k-1}=1$;
\item[(iii)] $-(k-1)<\val(\alpha(p))\le \val(\beta(p))<0$.
\end{itemize} 
In this situation, Berger-Breuil have shown that the completion of $\pi\otimes \Sym^{k-2} L^2$ with respect to any finitely generated 
$A[G]$-lattice is topologically irreducible, \cite[Cor 5.3.2, 5.3.4]{bergerbreuil}. This implies that the completion $E$ is topologically 
irreducible and is isomorphic as a unitary $L$-Banach space representation of $G$ to the representation $B(V)$, with $V:=V(\alpha, \beta)$, 
defined in \cite[Def 4.2.4]{bergerbreuil}. In \cite{berger} Berger has shown that there are two possibilities:
\begin{itemize} 
\item[(A)] $E^0\otimes_A k\cong \kappa(s, b)$, with $0\le s \le p-1$ and $0\le b <p-1$, in which case $\overline{V}\cong (\ind \omega_2^{s+1}) 
\otimes \omega^b$;
\item[(B)] $(E^0\otimes_A k)^{ss}\cong (\Indu{B}{G}{\psi_1\otimes\psi_2\omega^{-1}})^{ss}\oplus 
(\Indu{B}{G}{\psi_2\otimes\psi_1\omega^{-1}})^{ss}$, in which case $\overline{V}\cong \psi_1\oplus \psi_2$.
\end{itemize}
Since, we know that $\Hom_G(\kappa, E^0\otimes_A k)\neq 0$, the result of Berger together with \cite[Thm 33, 34]{bl}, \cite[Cor 4.1.4]{breuil1} 
implies that $\overline{V}\cong \rho$.
\end{proof}   

\textbf{Example.} Assume that $\theta_1=\theta_2=\Eins$, so that $\tau(\theta_1, \theta_2)$ is the trivial representation of $K$. Fix an integer $k\ge 2$, 
and choose $\alpha_p, \beta_p\in \MM$, such that $\alpha_p \beta_p=p^{k-1}$, set $a_p:=\alpha_p+\beta_p$. We may assume that $\val(\alpha_p)\ge \val(\beta_p)$, 
define unramified characters $\alpha, \beta:\Qp^{\times}\rightarrow L^{\times}$, by $\alpha(p):=\alpha_p^{-1}$ and $\beta(p):=\beta_p^{-1}$. The representation
$V:=V(\alpha, \beta)$ is crystalline with Hodge-Tate weights $(0,k-1)$, and is isomorphic to the representation denoted by $V_{k,a_p}$ in \cite{berger}, 
\cite{bergerlizhu}. In \cite{bergerlizhu} and \cite{berger} the reduction $\overline{V}$ is computed when $2\le k\le 2p+1$, (see also \cite{bergerbreuil2}, 
the case $k=2p+1$ is an unpublished result of Breuil). We will illustrate the Theorem
in this case. Let $M$ be a $K$-stable lattice in $\Sym^{k-2} L^2$, with $2\le k\le 2p+1$. Let $\sigma:=\Sym^r k_L^2\otimes\det^a$ be an irreducible
subquotient of $M\otimes_A k$ and let $\rho$ be as in Theorem \ref{mainQp}. We will show that the assertion of Theorem \ref{mainQp} matches 
the computations of \cite{bergerlizhu}, \cite{berger}, that is there exists $a_p\in \MM$ such that $\overline{V}_{k,a_p}\cong \rho$. 
We note that the assumption in Theorem \ref{mainQp} (b) implies that we exclude the 
representations $\rho$, such that $\rho|_{\II_{\Qp}}\cong \Eins \oplus \omega^{k-1}$. 

If $2\le k\le p+1$ then $(M\otimes_A k_L)^{ss}\cong \Sym^{k-2} k_L^2$, 
and hence $\rho=\ind \omega_2^{k-1}$. Now it follows from \cite[Cor 4.1.3, Prop. 4.1.4]{bergerlizhu} that $\overline{V}\cong \rho$. 

If $k=p+2$ then \eqref{symp} below
gives   $(M\otimes_A k_L)^{ss}\cong \Sym^1 k_L^2 \oplus \Sym^{p-2} k_L^2 \otimes \det$, so $\rho$ is either $\ind \omega_2^2$, $(\ind \omega_2^{p-1})\otimes \omega
\cong \ind\omega^{2}$, \cite[Lem. 4.2.2]{breuil1} 
or $\mu_{\lambda} \omega^{p} \oplus \mu_{\lambda^{-1}} \omega= \mu_{\lambda}\omega \oplus \mu_{\lambda^{-1}}\omega$, for $\lambda\in k_L^{\times}$. If $0<\val(a_p)<1$ then
$\overline{V}\cong \ind \omega_2^2$, \cite[Thm 3.2.1]{berger}. If $\val(a_p)=1$ then $\overline{V}\cong \omega \mu_{\lambda}\oplus \omega\mu_{\lambda^{-1}}$, 
where $\lambda$ is a root of a polynomial $X^2-\overline{a_p/p} X+1$. Now $\alpha_p \beta_p=p^{p+1}$, $\val(\alpha_p)\ge \val(\beta_p)$,
$\val(\alpha_p +\beta_p)=1$ implies that $\val(\beta_p)=1$  and $\val(\alpha_p)>1$. So $a_p/p \equiv \beta_p/p\pmod{\MM}$. Choose any $u\in k_L^{\times}$, and 
let $[u]$ denote the Teichm\"uller lift. Replace $\alpha_p$ with $\alpha_p [u]^{-1}$ and $\beta_p$ with $\beta_p [u]$. Then $\overline{V}$ is isomorphic 
to $\omega\mu_{\lambda_u}\oplus \omega\mu_{\lambda_u^{-1}}$, where $\lambda_u$ is any root of the polynomial $X^2+ u \overline{a_p/p}X +1$. 
Given $\xi\in k_L^{\times}$, such that $\xi^2\neq -1$, let $u=-(\xi+\xi^{-1}) \overline{p/a_p}$, then $\lambda_u=\xi^{\pm 1}$.
If $\val(a_p)>1$ then $\overline{V}\cong \mu_{\sqrt{-1}}\omega \oplus \mu_{-\sqrt{-1}}\omega$, 
\cite[Cor. 4.1.3, Prop. 4.1.4]{bergerlizhu}.

If $p+3\le k\le 2p$,  then set $r_0:=(k-2)-(p-1)$, $r_1:=(k-2)-(p+1)$. We have $2\le r_0\le p-1$ and $0\le r_1\le p-3$. Then 
$$(M\otimes_A k_L)^{ss}\cong (\Sym^{r_1} k_L^2 \otimes \det )\oplus \Sym^{r_0} k_L^2 \oplus (\Sym^{p-1-r_0} k_L^2 \otimes \dt^{r_0})$$  
see \eqref{secondss}. Then $\rho$ is $\ind \omega_2^{r_0+1}\cong (\ind \omega_2^{p-r_0})\otimes \omega^{r_0}$, or $(\ind \omega_2^{r_1+1})\otimes \omega$, or
$\mu_{\lambda} \omega^{r_1+2} \oplus \mu_{\lambda^{-1}}\omega\cong \mu_{\lambda}\omega^{k-2}\oplus \mu_{\lambda^{-1}} \omega$, for some $\lambda\in k_L^{\times}$.
If $0<\val(a_p)<1$ then $\overline{V} \cong \ind \omega^{r_0+1}$, if $\val(a_p)>1$ then $\overline{V}\cong (\ind \omega_2^{r_1+1})\otimes \omega$, and 
if $\val(a_p)=1$ then $\overline{V}\cong \mu_{\lambda}\omega^{k-2}\oplus \mu_{\lambda^{-1}} \omega$, where $\lambda=(k-1)\overline{a_p/p}$, \cite[Thm 3.2.1]{berger}.
Again we see that every $\rho$ is isomorphic to some $\overline{V}$.   

If $k=2p+1$ then  \eqref{secondss} gives
$$(M\otimes_A k_L)^{ss} \cong (\Sym^{p-2} k_L^2\otimes \det)\oplus \Sym^1 k_L^2\oplus( \Sym^{p-2} k_L^2\otimes \det).$$ 
So the possibilities for $\rho$ are the same as in the case $k=p+2$, so that $\rho=\ind \omega_2^2$ or $\rho=\mu_{\lambda}\omega\oplus \mu_{\lambda^{-1}}\omega$, 
for  $\lambda\in k_L^{\times}$. If $\val(a_p^2+p)<3/2$ then $\overline{V}\cong \ind \omega_2^2$, if $\val(a_p^2+p)\ge 3/2$ then $\overline{V}\cong 
\omega\mu_{\lambda}\oplus \omega\mu_{\lambda^{-1}}$, where $\lambda$ is any root of $X^2+\overline{\frac{a_p^2 + p}{2p a_p}} X +1$, \cite[Thm 3.2.1]{berger}.
 We leave it as an exercise to 
an interested reader to check that given $\xi\in k_L^{\times}$ there exists $\alpha_p$, $\beta_p$, such that $\alpha_p \beta_p=p^{2p}$, 
$\val( (\alpha_p+\beta_p)^2+p)\ge 3/2$ and $\xi+\xi^{-1}= -\overline{\frac{(\alpha_p+\beta_p)^2+p}{2p(\alpha_p+\beta_p)}}$.

\begin{lem}\label{pokemon} Let $\theta_1, \theta_2:\Zp^{\times}\rightarrow L^{\times}$ be smooth characters and $k\ge 2$ an integer.
Let $M$ be a  $K$-stable lattice in $\tau(\theta_1, \theta_2)\otimes \Sym^{k-2}L^2$. We make the following assumptions:
\begin{itemize} 
\item[(a)] if $\theta_1=\theta_2$ then assume $k\ge p^2+1$;
\item[(b)] if $\theta_1\neq \theta_2$ and $\theta_1\theta_2^{-1}$ is trivial on $1+p\Zp$ then assume $k\ge p$.
\end{itemize}
Then every irreducible $k_L$-representation $\sigma$ of $K$ with the central character $\bar{\theta}_1\bar{\theta}_2 \omega^{k-2}$ is a subquotient of 
$M\otimes_A k$.
\end{lem}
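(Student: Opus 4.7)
The plan is to compute the semisimplification $(M\otimes_A k_L)^{ss}$ directly as a $K$-representation and verify that every admissible irreducible appears as a constituent. Since this semisimplification depends only on $\tau(\theta_1,\theta_2)\otimes_L\Sym^{k-2}L^2$ up to choice of $K$-stable lattice (any two become commensurable after scaling), I may work with Brauer characters. Every irreducible $k_L$-representation of $K$ has trivial $K_1$-action and so is of the form $\sigma_{r,a'}=\Sym^r k_L^2\otimes\det^{a'}$ with $0\le r\le p-1$ and $0\le a'<p-1$; writing $\bar\theta_i=\omega^{j_i}$, the central-character condition forces $r+2a'\equiv j_1+j_2+k-2\pmod{p-1}$. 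I split on whether $\theta_1=\theta_2$.

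In the case $\theta_1=\theta_2$, one has $\tau(\theta_1,\theta_2)=\theta_1\circ\det$, so after twisting the question becomes whether every $\sigma_{r,a'}$ with $r+2a'\equiv n\pmod{p-1}$ is a Jordan-H\"older constituent of $\Sym^n k_L^2$, where $n:=k-2\ge p^2-1$. My approach is to iterate the short exact sequence
\[
0\to \Sym^1 k_L^2\to \Sym^p k_L^2\to \Sym^{p-2}k_L^2\otimes\det\to 0
\]
(where the inclusion is the Frobenius map $x^p,y^p\mapsto x,y$), combined with Steinberg's tensor-product theorem applied to the base-$p$ expansion of $n$. The hypothesis $n\ge p^2-1$ ensures the base-$p$ expansion has enough depth for the iteration to reach every admissible pair $(r,a')$, essentially because each iteration shifts the ``exponent data'' by a fixed vector in $\mathbb Z/(p-1)$ and two $p$-digits of latitude are needed to cover every residue.

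In the case $\theta_1\neq\theta_2$, set $c:=c(\theta_1,\theta_2)\ge 1$, so $\tau(\theta_1,\theta_2)=\Ind_{J_c}^K(\theta_1\otimes\theta_2)$ has dimension $p^{c-1}(p+1)$, and its reduction $\bar\tau$ is $\Ind_{J_c}^K(\bar\theta_1\otimes\bar\theta_2)$. For $c\ge 2$, $\bar\tau$ is already large enough to contain every admissible $\sigma_{r,a'}$ as a Jordan-H\"older constituent (a computation of $I_1$-invariants combined with Mackey exhibits them directly), giving the claim for any $k\ge 2$. For $c=1$ (condition (b)), $\bar\theta_1\neq\bar\theta_2$ forces $\bar\tau$ to be the irreducible principal series of $\GL_2(\Fp)$ of dimension $p+1$; tensoring with $\Sym^{k-2}k_L^2$ and applying the same recursion as in the first case then works, with the weaker bound $k\ge p$ sufficing because $\bar\tau$ already contributes one extra iteration's worth of weight-coverage.

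The main obstacle is the combinatorial verification in the first case that iterating the short exact sequence truly reaches every required residue class of $r+2a'$. A clean proof will identify the recursion with Glover's explicit description of the Jordan-H\"older series of $\Sym^n$ of the natural $\GL_2(\Fp)$-module; the bounds $k\ge p^2+1$ (resp.\ $k\ge p$) should turn out to be precisely what is needed so that the $p$-adic carry structure of $n$ produces all admissible $(r,a')$, and any weaker bound would omit the boundary weights $r\in\{0,p-1\}$.
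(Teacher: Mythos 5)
Your starting point — semi\-simplification is independent of the lattice, so one can compute Brauer characters — matches the paper. But the core combinatorial work is not carried out, and in case (b) the mechanism you describe does not match what actually makes the bound $k\ge p$ appear, so the proposal has genuine gaps rather than just being terse.

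For case (a) ($\theta_1=\theta_2$), the paper does not iterate the Frobenius exact sequence $0\to\Sym^1\to\Sym^p\to\Sym^{p-2}\otimes\det\to 0$ together with Steinberg's tensor-product theorem. Instead it proves the much more efficient Brauer-character identity
\[
\chi_n \;=\; \chi_{n-p-1}\cdot\det \;+\; \chi_r \;+\; \chi_{p-r-1}\cdot\det^{\,r},
\qquad n\ge p+1,\ \ n\equiv r\pmod{p-1},\ 0\le r<p-1,
\]
verified by the explicit formulas for $\chi_n$ on $p$-regular classes. Each application peels off the pair $\Sym^r,\ \Sym^{p-1-r}\otimes\det^r$ and replaces $n$ by $n-(p+1)$, picking up a twist by $\det$. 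Because the extra $\det$ accumulates one unit per step, iterating $m$ times covers the residues $a=0,\dots,m-1$ of the determinant-twist; one needs $m\ge p-1$, i.e.\ $n\ge p^2-1$, i.e.\ $k\ge p^2+1$. Your proposal never establishes anything of this kind: you acknowledge that ``the main obstacle is the combinatorial verification,'' and defer to Glover's Jordan--H\"older description and to Steinberg's theorem (which is not even well-adapted to $\GL_2(\Fp)$, where Frobenius twist is trivial, so ``base-$p$ expansion'' is not a free parameter). As written this is a plan, not a proof, and it is unclear the route you sketch would reach the same clean bound. Your claim that ``any weaker bound would omit the boundary weights $r\in\{0,p-1\}$'' is also not what happens: the constraint is about exhausting all $p-1$ possible determinant twists, not about boundary values of $r$.

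For case (b), the paper splits on $c:=c(\theta_1,\theta_2)$ and never uses the recursion at all. When $c=1$, $J_c=I$ and $M\otimes_A k\cong\Indu{I}{K}{(\bar\theta_1\otimes\bar\theta_2)\otimes\Sym^n k_L^2}$; since $((\Sym^n)|_I)^{ss}\cong\bigoplus_{i=0}^n(\omega^{n-i}\otimes\omega^i)$, one needs the $n+1$ diagonal characters to cover all $p-1$ residues, hence $n\ge p-2$, i.e.\ $k\ge p$. Your phrase ``$\bar\tau$ already contributes one extra iteration's worth of weight-coverage'' does not describe this mechanism and appears to conflate it with case (a). When $c\ge 2$, the paper observes that $(\Indu{J_2}{I}{\Eins})^{ss}\cong\bigoplus_{i=0}^{p-1}(\omega^{-i}\otimes\omega^i)$ already exhausts all residues, so no condition on $k$ is needed; your ``$I_1$-invariants plus Mackey'' gestures in this direction but is not a proof. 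In short, the case $c\ge 2$ of your sketch is plausibly repairable, but cases (a) and $c=1$ rest on combinatorics and a mechanism that the proposal neither carries out nor correctly identifies.
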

\begin{proof} This is shown in the appendix.
\end{proof} 
  
\begin{cor}\label{cristabeline} Fix smooth characters $\theta_1, \theta_2:\Zp^{\times}\rightarrow L^{\times}$, and an integer $k\ge 2$, such that 
\begin{itemize} 
\item[(a)] if $\theta_1=\theta_2$ then assume $k\ge p^2+1$;
\item[(b)] if $\theta_1\neq \theta_2$ and $\theta_1\theta_2^{-1}$ is trivial on $1+p\Zp$ then assume $k\ge p$.
\end{itemize}
Let $\rho$ be a semisimple  $2$-dimensional $k_L$-representation of $\mathcal G_{\Qp}$, such that 
\begin{itemize} 
\item[(c)] $\det \rho|_{\mathcal I_{\Qp}}= \bar{\theta_1} \bar{\theta_2}\omega^{k-1}$;
\item[(d)] if $\rho$ is irreducible, then it is absolutely irreducible;
\item[(e)] $\rho|_{\II_{\Qp}}\not\cong \bar{\theta}_1\oplus \bar{\theta}_2 \omega^{k-1}$ and 
$\rho|_{\II_{\Qp}}\not \cong \bar{\theta}_2\oplus \bar{\theta}_1 \omega^{k-1}$.
\end{itemize}
Then there exists a finite extension $L'$ of $L$ and an absolutely 
irreducible $2$-dimensional crystabelline $L'$-representation $V:=V(\alpha, \beta)$ of $\mathcal G_{\Qp}$, such that
\begin{itemize} 
\item[(i)] $\overline{V}\cong \rho$;
\item[(ii)] Hodge-Tate weights of $V$ are  $(0,k-1)$;
\item[(iii)]  either ($\alpha|_{\Zp^{\times}}=\theta_1$ and $\beta|_{\Zp^{\times}}=\theta_2$) or ($\alpha|_{\Zp^{\times}}=\theta_2$ and $\beta|_{\Zp^{\times}}=\theta_1$).
\end{itemize}
\end{cor}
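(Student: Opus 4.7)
The plan is to deduce Corollary \ref{cristabeline} directly from Theorem \ref{mainQp} and Lemma \ref{pokemon}, together with an unramified twist at the end to absorb the discrepancy between condition (c), which only prescribes $\det\rho|_{\II_{\Qp}}$, and the specific shape of $\rho$ produced by Theorem \ref{mainQp}. First I would put $\rho$ in a normal form: if $\rho$ is absolutely irreducible, then after enlarging $L$ one may write $\rho \cong (\ind \omega_2^{r+1}) \otimes \omega^a \otimes \mu_\nu$ with $0 \leq r \leq p-1$, $0 \leq a < p-1$ and $\nu \in k_L^\times$; if $\rho$ is semisimple reducible, say $\rho \cong \psi_1 \oplus \psi_2$, one factors out the common unramified part $\mu_\nu$ (possibly enlarging $L$ to extract a square root) to write $\rho \cong \mu_\nu \bigl(\mu_\lambda \omega^{r+1+a} \oplus \mu_{\lambda^{-1}} \omega^a\bigr)$ with $r, a$ as above and $\lambda \in k_L^\times$.

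Next I would verify that $\sigma := \Sym^r k_L^2 \otimes \det^a$ is a subquotient of $M \otimes_A k_L$. In both normal forms the inertial determinant satisfies $\det\rho|_{\II_{\Qp}} = \omega^{r+1+2a}$ (unramified factors cancel on $\II_{\Qp}$), so condition (c) forces $\bar{\theta}_1 \bar{\theta}_2 = \omega^{r+2a-(k-2)}$. Hence the central character of $\sigma$, which is $\omega^{r+2a}$, equals $\bar{\theta}_1 \bar{\theta}_2 \omega^{k-2}$, which is precisely the central character of $M \otimes_A k_L$. Since the hypotheses (a), (b) on $k$ in the corollary coincide with those of Lemma \ref{pokemon}, that lemma delivers $\sigma$ as a subquotient.

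With this input I would apply Theorem \ref{mainQp}. In the irreducible case, case (a) with our $r, a$ produces $V_0 = V(\alpha_0, \beta_0)$ absolutely irreducible crystabelline, with Hodge-Tate weights $(0, k-1)$, $\{\alpha_0,\beta_0\}|_{\Zp^\times} = \{\theta_1,\theta_2\}$, and $\bar{V_0} \cong (\ind \omega_2^{r+1}) \otimes \omega^a$. In the reducible case, condition (e) of the corollary is exactly the hypothesis of Theorem \ref{mainQp}(b) that $(\omega^{r+1+a} \oplus \omega^a)|_{\II_{\Qp}}$ is not isomorphic to $\bar{\theta}_1 \oplus \bar{\theta}_2 \omega^{k-1}$ or its swap; so case (b) with our $r, a, \lambda$ yields $V_0$ with $\bar{V_0} \cong \mu_\lambda \omega^{r+1+a} \oplus \mu_{\lambda^{-1}} \omega^a$.

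Finally, I would choose an unramified continuous character $\tilde{\mu}_\nu : \Qp^\times \to (L')^\times$ lifting $\mu_\nu$ (enlarging $L'$ if needed) and set $V := V_0 \otimes \tilde{\mu}_\nu = V(\alpha_0 \tilde{\mu}_\nu, \beta_0 \tilde{\mu}_\nu)$. Since $\tilde{\mu}_\nu$ is crystalline, unramified and of Hodge-Tate weight $0$, $V$ inherits from $V_0$ absolute irreducibility, the crystabelline property, and Hodge-Tate weights $(0,k-1)$; the restrictions to $\Zp^\times$ are unchanged so (iii) holds; and $\bar{V} \cong \bar{V_0} \otimes \mu_\nu \cong \rho$, verifying (i). The only genuine subtlety — rather than a real obstacle — is this bookkeeping of unramified twists, since Theorem \ref{mainQp} outputs $\rho$ in a form lacking a free unramified parameter in the irreducible case, and one has to check that the extra flexibility afforded by condition (c) (being imposed only on the inertial determinant) is exactly what is needed to reach arbitrary $\rho$ with the prescribed inertial determinant.
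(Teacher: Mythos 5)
Your proof is correct and takes essentially the same route as the paper's one-line proof, which says only ``After twisting by a character, we can get $\rho$ to be as in Theorem \ref{mainQp}. The assertion follows from Theorem \ref{mainQp} and Lemma \ref{pokemon}.'' You have spelled out what the paper leaves implicit: that the required twist is unramified (so it does not disturb condition (c), condition (e), or the restrictions $\alpha|_{\Zp^\times},\beta|_{\Zp^\times}$), that the central-character matching required by Lemma \ref{pokemon} is exactly condition (c), and that in the split case hypothesis (e) coincides verbatim with the hypothesis of Theorem \ref{mainQp}(b) after normalization.
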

\begin{proof} After twisting by a character, we can get $\rho$ to be as in Theorem \ref{mainQp}. The assertion follows from Theorem \ref{mainQp} and Lemma 
\ref{pokemon}.
\end{proof}

\appendix
\section{Semi-simplification}
We prove Lemma \ref{pokemon}. To simplify the notation we set $n:=k-2$ we keep the assumption $p>2$ and notations of the previous section.
 Let $M$ be a $K$-invariant lattice in $\tau(\theta_1, \theta_2)\otimes \Sym^n L^2$. Since $\tau(\theta_1, \theta_2)\otimes \Sym^n L^2$ is 
a finite dimensional $L$-vector space, $(M\otimes_A k_L)^{ss}$ does not depend on the choice of $M$, see the proof of \cite[Thm 32]{serre}. 
Since $\theta_1$ and $\theta_2$ 
are smooth characters, $\theta_1(g)$ and $\theta_2(g)$ are roots of unity for all $g\in \Zp^{\times}$. Hence, $\theta_1$ and $\theta_2$ are $A$-valued. 
If $\delta: \Zp^{\times}\rightarrow L^{\times}$ is a smooth character then Lemma \ref{pokemon} holds for $\theta_1$ , $\theta_2$, $k$ if and only if 
it holds for $\theta_1\delta$, $\theta_2\delta$, $k$. In particular, if $c=0$ we may assume that $\theta_1=\theta_2=\Eins$, so that $\tau(\theta_1, \theta_2)$
is the trivial representation, and take $M:=\Sym^n A^2$, so that $M\otimes_A k_L\cong \Sym^n k_L^2$.
If $c>1$ let $M_1$ be the space of functions $f:K\rightarrow A$, such that $f(hg)=(\theta_1\otimes \theta_2)(h) f(g)$, for all $g\in K$, $h\in J_c$. Then $M_1$ 
is a $K$-invariant lattice in $\tau(\theta_1, \theta_2)$, so $M:=M_1\otimes_A \Sym^n A$ is a $K$-invariant lattice in $\tau(\theta_1, \theta_2)\otimes \Sym^n L^2$, 
and $M\otimes_A k_L\cong (\Indu{J_c}{K}{\bar{\theta}_1\otimes\bar{\theta}_2})\otimes \Sym^n k_L^2$.  Since $k_L$ contains $\mathbb F_p$ 
every irreducible 
representation is absolutely irreducible. Hence, as far as semi-simplification is concerned working over $k_L$ is 
the same as working over an algebraically closed field, see \cite[\S14.6]{serre}. 

We first look at the case $c=0$ and so $M\otimes_A k_L\cong \Sym^n k_L^2$. Since $K_1$ acts trivially on $\Sym^n k_L^2$, it is enough to compute 
the semi-simplification of $\Sym^n k_L^2$ as a representation of $\GL_2(\Fp)$. Recall that semi-simplification is 
determined by the Brauer character, which is a $\Qpbar$-valued function on $p$-regular conjugacy classes of $\GL_2(\Fp)$, \cite[\S18.2]{serre}.  
We have \cite[\S1]{diamond}: 
$$ \chi_n(\begin{pmatrix} \lambda & 0\\ 0 & \lambda\end{pmatrix})= (n+1) [\lambda]^n, \quad   \chi_n(\begin{pmatrix} \lambda & 0\\ 0 & \mu\end{pmatrix})= 
\frac{[\lambda]^{n+1}-[\mu]^{n+1}}{[\lambda]-[\mu]},$$
where $\lambda, \mu\in \Fp^{\times}$ and $\lambda\neq \mu$. Moreover, choose an embedding $\iota:\mathbb F^{\times}_{p^2}\rightarrow \GL_2(\Fp)$, suppose that 
$z\in \mathbb F_p^2 \setminus \Fp$ then $\chi_n(\iota(z))$ does not depend on $\iota$ and we have: 
$$\chi_n(z)= [z]^n \frac{[z]^{(p-1)(n+1)}- 1}{[z]^{p-1}-1}.$$

\begin{lem}\label{simp} Let $n\ge p+1$ be an integer then $\chi_n= \chi_{n-p-1}\det + \chi_r + \chi_{p-r-1}\det^r$, where $0\le r<p-1$ and $n\equiv r\pmod{p-1}$.
\end{lem}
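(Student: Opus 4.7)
The plan is to verify the identity at the level of Brauer characters. Since every irreducible $k_L$-representation of $\GL_2(\Fp)$ is absolutely irreducible and semisimplification is determined by the Brauer character (a $\Qpbar$-valued class function on the $p$-regular classes of $\GL_2(\Fp)$), it suffices to check that $\chi_n$ and $\chi_{n-p-1}\det + \chi_r + \chi_{p-r-1}\det^r$ agree on each of the three types of $p$-regular conjugacy classes: central, split semisimple and non-split semisimple. The three explicit formulas for $\chi_n$ quoted just before the lemma are the only input needed, together with the basic facts $[\lambda]^{p-1}=1$ for $\lambda\in\Fp^{\times}$ and $[z]^{p^2-1}=1$ for $z\in\mathbb F_{p^2}^{\times}$, and the congruence $n\equiv r\pmod{p-1}$ which gives $[\lambda]^n=[\lambda]^r$ on $\Fp^{\times}$.

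On the central class of $\bigl(\begin{smallmatrix}\lambda & 0\\ 0&\lambda\end{smallmatrix}\bigr)$, the LHS evaluates to $(n+1)[\lambda]^n=(n+1)[\lambda]^r$. The three terms on the RHS give $(n-p)[\lambda]^{n-p+1}$, $(r+1)[\lambda]^r$ and $(p-r)[\lambda]^{p+r-1}$; since $[\lambda]^{p-1}=1$ each exponent collapses to $r$, and the integer coefficients sum to $(n-p)+(r+1)+(p-r)=n+1$.

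On the split class of $\bigl(\begin{smallmatrix}\lambda & 0\\ 0&\mu\end{smallmatrix}\bigr)$ with $\lambda\neq\mu$, the LHS equals $\frac{[\lambda]^{r+1}-[\mu]^{r+1}}{[\lambda]-[\mu]}$, again using $n\equiv r\pmod{p-1}$. For the RHS, using $[\lambda]^{n-p}=[\lambda]^{r-1}$ and $[\lambda]^{p-r}=[\lambda]^{1-r}$ (via $[\lambda]^{p-1}=1$), the three numerators become $[\lambda]^r[\mu]-[\lambda][\mu]^r$, $[\lambda]^{r+1}-[\mu]^{r+1}$ and $[\lambda][\mu]^r-[\mu][\lambda]^r$; the cross terms cancel and leave exactly $[\lambda]^{r+1}-[\mu]^{r+1}$.

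For the non-split class of $\iota(z)$ with $z\in\mathbb F_{p^2}\setminus\Fp$, I would rewrite $\chi_n(z)$ in the convenient form $\frac{[z]^{n+1}-[z]^{p(n+1)}}{[z]-[z]^p}$ (the eigenvalues of $\iota(z)$ are $z,z^p$), and use $\det(\iota(z))=z^{p+1}$. Clearing the common denominator $[z]-[z]^p$, the identity to be checked becomes a polynomial equation in $[z]$, which after expansion reduces (using $[z]^{p^2+r}=[z]^{r+1}$ by $[z]^{p^2-1}=1$) to the single congruence $pn-p^2+p+1\equiv pn+p\pmod{p^2-1}$, equivalently $-(p^2-1)\equiv 0\pmod{p^2-1}$, which is trivial. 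The only step requiring some care is this non-split case, because one must keep track of the Frobenius-conjugate eigenvalue and reduce exponents modulo $p^2-1$ rather than $p-1$; once that bookkeeping is done the three verifications are routine.
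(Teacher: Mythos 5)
Your proof is correct, and the arithmetic in all three cases checks out. It is, however, a genuinely different route from the paper's. The paper computes only the difference $\psi := \chi_n - \chi_{n-p-1}\det$ on the three types of $p$-regular classes, finds the simple closed form $\psi(\lambda I)=(p+1)[\lambda]^r$, $\psi(\mathrm{diag}(\lambda,\mu))=[\lambda]^r+[\mu]^r$, $\psi(\iota(z))=0$, recognizes this as the Brauer character of the induced representation $\Ind_{B(\Fp)}^{\GL_2(\Fp)}\chi$ with $\chi\bigl(\begin{smallmatrix}a&b\\0&d\end{smallmatrix}\bigr)=a^r$, and then quotes the known semisimplification $\bigl(\Ind_{B(\Fp)}^{\GL_2(\Fp)}\chi\bigr)^{ss}\cong\Sym^r k_L^2\oplus\Sym^{p-r-1}k_L^2\otimes\det^r$ from the literature. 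You instead compute all four characters $\chi_n$, $\chi_{n-p-1}\det$, $\chi_r$, $\chi_{p-r-1}\det^r$ directly and verify the identity by hand on each class. Your approach is more elementary and self-contained (no citation of the principal-series decomposition is needed), at the cost of a bit more bookkeeping — in particular the non-split case requires rewriting $\chi_n(z)$ via the eigenvalue pair $z,z^p$ and reducing exponents modulo $p^2-1$, which you handle correctly, whereas in the paper's approach that case trivializes because $\psi(z)=0$. The paper's recognition of $\psi$ as an induced character is structurally cleaner and explains \emph{why} the extra summand has the form $\chi_r+\chi_{p-r-1}\det^r$; your computation merely confirms it. Both are valid; either could replace the other.
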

\begin{proof} Let $\psi:=\chi_n-\chi_{n-p-1}\det$ then a calculation using the formulae above gives:
$$\psi(\begin{pmatrix} \lambda & 0\\ 0 & \lambda\end{pmatrix})= (p+1) [\lambda]^r, \quad \psi(\begin{pmatrix} \lambda & 0\\ 0 & \mu\end{pmatrix})= 
[\lambda]^r +[\mu]^r,\quad \psi(z)=0.$$
Let $B(\Fp)$ be the group of upper triangular matrices in $\GL_2(\Fp)$ and let $\chi: B(\Fp)\rightarrow \Fbar^{\times}$ be the character, given by 
$\chi(\bigl( \begin{smallmatrix} a & b \\0 & d\end{smallmatrix} \bigr ))= a^r$. It follows from \cite[\S1]{diamond} that $\psi$ is the Brauer character 
of $\Indu{B(\Fp)}{\GL_2(\Fp)}{\chi}$. Since 
$$  (\Indu{B(\Fp)}{\GL_2(\Fp)}{\chi})^{ss}\cong \Sym^r k_L^2 \oplus \Sym^{p-r-1} k_L^2 \otimes \dt^{r},$$
see eg. \cite[Lem 3.1.7, 4.1.3]{coeff}, we obtain the result.
\end{proof}
 If $0\le n\le p-1$ then $\Sym^n\Fbar^2$ is irreducible, if $n=p$ then 
\begin{equation}\label{symp}
(\Sym^p k_L^2)^{ss}\cong \Sym^1 k_L^2 \oplus \Sym^{p-2} k_L^2 \otimes \det,
\end{equation} 
see \cite[Lem.5.1.3]{breuil2}. Using Lemma \ref{simp}  and \eqref{symp} we may compute the semi-simplification. 
Let $m$ be the largest integer 
such that $n\ge (p+1)m$, for $0\le i\le m$ let $0\le r_i<p-1$ be the unique integer 
such that $n-(p+1)i \equiv r_i \pmod{p-1}$. If $n-(p+1)m=p$ then 
\begin{equation}\label{firstss}
(\Sym^n k_L^2)^{ss}\cong \bigoplus_{i=0}^m (\Sym^{r_i} k_L^2\otimes \dt^i \oplus  \Sym^{p-r_i-1} k_L^2\otimes \dt^{r_i+i}),
\end{equation}
otherwise, $(\Sym^n k_L^2)^{ss}$ is isomorphic to 
\begin{equation}\label{secondss}
 \Sym^{r_m} k_L^2 \otimes \dt^m \oplus \bigoplus_{i=0}^{m-1} 
(\Sym^{r_i} k_L^2\otimes \dt^i \oplus  \Sym^{p-r_i-1} k_L^2\otimes \dt^{r_i+i}).
\end{equation}

\begin{lem} Assume $p>2$ and let $n\ge p^2-1$ be an integer, let 
$\sigma$ be an irreducible $k_L$-representation of $K$, with central character $\omega^n$. Then 
$\sigma$ occurs as a subquotient of $\Sym^n k_L^2$.
\end{lem}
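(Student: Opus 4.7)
The plan is to reduce every candidate irreducible to a summand in the explicit semi-simplification of $\Sym^n k_L^2$ provided by \eqref{firstss} and \eqref{secondss}, and then to match it with the appropriate family in those formulas.

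First I would observe that any irreducible smooth $k_L$-representation $\sigma$ of $K$ has $K_1$ acting trivially, since $K_1$ is pro-$p$ and $\sigma$ is finite dimensional and irreducible over a field of characteristic $p$. So $\sigma$ factors through $K/K_1\cong \GL_2(\Fp)$, and hence is of the form $\sigma\cong \Sym^s k_L^2\otimes \dt^a$ for uniquely determined integers $s\in\{0,\ldots,p-1\}$ and $a\in\{0,\ldots,p-2\}$. A scalar matrix $\lambda I$ then acts on $\sigma$ via $\lambda^{s+2a}$, so the hypothesis that $\sigma$ has central character $\omega^n$ translates into the congruence $s+2a\equiv n\pmod{p-1}$.

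Next I would put $m:=\lfloor n/(p+1)\rfloor$ and note that $n\ge p^2-1=(p-1)(p+1)$ forces $m\ge p-1$. Given an irreducible $\sigma=\Sym^s k_L^2\otimes \dt^a$ with central character $\omega^n$, I would set $i:=a\in\{0,\ldots,p-2\}$. Then $r_i\in\{0,\ldots,p-2\}$ is the reduction of $n-2i=n-2a$ modulo $p-1$, and the central character condition gives $r_i=s$ if $s\le p-2$, and $r_i=0$ if $s=p-1$. In the first case the summand $\Sym^{r_i}k_L^2\otimes \dt^i$ of the \emph{first} family in \eqref{firstss}/\eqref{secondss} realizes $\sigma$; in the second case the summand $\Sym^{p-r_i-1}k_L^2\otimes \dt^{r_i+i}=\Sym^{p-1}k_L^2\otimes \dt^a$ of the \emph{second} family realizes $\sigma$.

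The delicate point, and the reason the threshold $n\ge p^2-1$ is imposed, is the range of indices in the second family. In the ``otherwise'' case \eqref{secondss} that family only runs over $i\in\{0,\ldots,m-1\}$, so for the chosen $i=a$ to lie in range one needs $a\le m-1$, i.e.\ $m\ge p-1$; this is exactly what $n\ge p^2-1$ guarantees. For the first family the index always runs through $\{0,\ldots,m\}$ in either of \eqref{firstss}, \eqref{secondss}, and since $a\le p-2\le m$, no issue arises there. Thus once this index-range check is verified the required matching is immediate, and every irreducible $\sigma$ appears as a subquotient of $\Sym^n k_L^2$.
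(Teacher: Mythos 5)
Your argument is correct and is essentially the paper's own proof, just streamlined: both reduce to matching $\sigma$ with a summand of \eqref{firstss}/\eqref{secondss} and invoke $n\ge p^2-1$ to ensure $m\ge p-1$ so the relevant index is in range. Where the paper introduces an auxiliary integer $j$ and deduces $a\in\{j,\,j+(p-1)/2\}$ before identifying the summand, you set $i=a$ directly; and your explicit split into the cases $s\le p-2$ (match in the first family $\Sym^{r_a}\otimes\dt^a$) and $s=p-1$ (match in the second family $\Sym^{p-r_a-1}\otimes\dt^{r_a+a}$ with $r_a=0$) is a little more careful than the paper's assertion ``$r=r_j$'', which is literally only true when $r\le p-2$ since $r_j<p-1$ by definition.
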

\begin{proof} We note that the central character of $\Sym^n k_L^2$ is $\omega^n$. Hence, every irreducible subquotient will also have a central character 
$\omega^n$. We have $\sigma\cong \Sym^r k_L^2 \otimes \det^a$ with $0\le r\le p-1$ and $0\le a <p-1$. The central character of $\sigma$ is equal to 
$\omega^{r+2a}$. The equality $\omega^{r+2a}=\omega^n$ implies that $2$ divides $n-r$.  Let $r_j$ be as above and note that $r_j\equiv r_0-2j\pmod{p-1}$. 
Since $p+1$ is even we get that $2$ divides
$r-r_0$. Let $0\le j<(p-1)/2$ be the unique  integer such that $r_0-2j \equiv r \pmod{p-1}$. Then $r=r_j=r_{j+(p-1)/2}$. The congruence, $2a\equiv n-r\equiv r_0-r 
\pmod{p-1}$ implies that either $a=j$ or $a=j+(p-1)/2$. Hence, either $\sigma\cong \Sym^{r_j}k_L^2\otimes \det^j$ or $\sigma\cong 
\Sym^{r_{j+(p-1)/2}} k_L^2 \otimes \det^{j+(p-1)/2}$. It follows from \eqref{firstss} and \eqref{secondss} that $\sigma$ is an irreducible subquotient of 
$\Sym^n k_L^2$. Note that the assumption $n\ge p^2-1$ implies that $m\ge p-1$.
\end{proof}
We look at the case $c\ge 1$, so that $M\otimes_A k_L\cong (\Indu{J_c}{K}{\bar{\theta}_1\otimes \bar{\theta}_2})\otimes \Sym^n k_L^2$.
 \begin{lem} If $\theta_1\theta_2^{-1}$ is trivial on $1+p\Zp$ then assume $n\ge p-2$. 
Then every irreducible $k_L$-representation $\sigma$ of $K$ with the central character $\bar{\theta}_1\bar{\theta}_2 \omega^{k-2}$ is a subquotient of 
$M\otimes_A k$.
\end{lem}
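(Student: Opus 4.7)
The plan is to identify $[M\otimes_A k_L]^{ss}$ explicitly in the Grothendieck group of $K$-representations and then match the resulting list against the set of all $\sigma = \Sym^s k_L^2\otimes \det^b$ with the prescribed central character $\bar\theta_1\bar\theta_2\omega^n$.

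First I would apply the projection formula to rewrite
\[
M\otimes_A k_L\;\cong\;\Ind_{J_c}^K\!\bigl((\bar\theta_1\otimes\bar\theta_2)\otimes \Sym^n k_L^2\big|_{J_c}\bigr).
\]
Since each $\bar\theta_i$ is trivial on $1+p\Zp$, the restriction $\Sym^n k_L^2|_{J_c}$ factors through the surjection $J_c\twoheadrightarrow B(\mathbb F_p)$ and carries the usual $U$-filtration whose graded pieces are the $T(\mathbb F_p)$-characters $\omega^{n-j}\otimes\omega^j$, $j=0,\dots,n$. Writing $\bar\theta_i=\omega^{a_i}$ with $0\le a_i<p-1$ and using exactness of induction, this yields in the Grothendieck group
\[
[M\otimes_A k_L]\;=\;\sum_{j=0}^n\bigl[\Ind_{J_c}^K(\omega^{a_1+n-j}\otimes\omega^{a_2+j})\bigr].
\]

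To evaluate each summand I would go through the Iwahori $I=J_1$ by transitivity of induction. The inner $\Ind_{J_c}^I(\omega^\alpha\otimes\omega^\beta)$ is a $J_1$-representation of dimension $p^{c-1}$ on which $T(\Zp)\subset J_c$ acts by the character $\omega^\alpha\otimes\omega^\beta$; since $T(\Zp)$ is normal in $I$ this forces its $I$-semi-simplification to consist of $p^{c-1}$ copies of $\omega^\alpha\otimes\omega^\beta$ viewed as a character of $I/K_1=B(\mathbb F_p)$. The outer induction $\Ind_{B(\mathbb F_p)}^{\GL_2(\mathbb F_p)}(\omega^\alpha\otimes\omega^\beta)$ has semi-simplification $\sigma_{r,\beta}+\sigma_{p-1-r,\,r+\beta}$ with $r\equiv \alpha-\beta\pmod{p-1}$, by the computation underlying Lemma \ref{simp}. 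Setting $r_0\equiv a_1-a_2\pmod{p-1}$ and $r_j=(r_0+n-2j)\bmod(p-1)$, combining everything gives
\[
[M\otimes_A k_L]^{ss}\;=\;p^{c-1}\sum_{j=0}^n\bigl(\sigma_{r_j,\,a_2+j}\;+\;\sigma_{p-1-r_j,\,r_j+a_2+j}\bigr).
\]

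It remains to verify that this family exhausts the irreducibles of the prescribed central character. Writing $e\equiv r_0+n+2a_2\pmod{p-1}$, the first family contributes $\sigma_{(e-2b)\bmod(p-1),\,b}$ when $j\equiv b-a_2$, and the second contributes $\sigma_{p-1-(2b-e)\bmod(p-1),\,b}$ when $j\equiv r_0+n+a_2-b$, both modulo $p-1$. As soon as $j$ ranges over every residue modulo $p-1$—which is guaranteed by $n\ge p-2$—both reparametrisations sweep out every $b\in\{0,\dots,p-2\}$; for each such $b$ a short verification shows that the two contributions together hit precisely the one or two admissible values of $s\in[0,p-1]$ with $s+2b\equiv e$ (a single $s\in[1,p-2]$ when $e\not\equiv 2b\pmod{p-1}$, with both families giving the same $\sigma$, and the pair $\{s=0,\,s=p-1\}$ when $e\equiv 2b$, with the two families supplying the two distinct $\sigma$). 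This matches the enumeration of irreducible $k_L$-representations of $K$ with central character $\omega^e=\bar\theta_1\bar\theta_2\omega^n$, completing the case $c=1$. The main obstacle is precisely this coverage step: it genuinely requires $n+1\ge p-1$ in order for $j$ to traverse all residues modulo $p-1$; the subtle point in the higher-conductor case $c\ge 2$ is that the same formula for $[M\otimes_A k_L]^{ss}$ still applies and one must exploit the enlarged multiplicity $p^{c-1}$ together with the specific ramification data of $\theta_1,\theta_2$ to close the coverage when $n$ is small.
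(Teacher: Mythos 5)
Your $c=1$ case is essentially correct and close in spirit to the paper's argument (reduce mod the Iwahori filtration of $\Sym^n$ and let the index $j$ traverse all residues mod $p-1$, which is why $n\ge p-2$ is needed). But the $c\ge 2$ case has a genuine gap, and the step that fails is exactly the one you flag as the "subtle point."

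You claim that $\bigl(\Ind_{J_c}^I(\omega^\alpha\otimes\omega^\beta)\bigr)^{ss}$ consists of $p^{c-1}$ copies of $\omega^\alpha\otimes\omega^\beta$, justified by "$T(\Zp)$ is normal in $I$." This is false: $T(\Zp)$ is \emph{not} normal in $I$ (conjugating a diagonal matrix $\mathrm{diag}(a,d)$ with $a\ne d$ by a lower unipotent $\begin{pmatrix}1&0\\ px&1\end{pmatrix}\in I$ produces a non-diagonal matrix), and the induced representation is not $T(\Zp)$-isotypic: on $f\in\Ind_{J_c}^I\chi$ one has $(tf)(g)=\chi(gtg^{-1})f(g)$ whenever $gtg^{-1}\in J_c$, and this conjugate can leave $J_c$. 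Concretely, already for $c=2$ your formula would give $\bigl(\Ind_{J_2}^I\mathbf 1\bigr)^{ss}=p\cdot\mathbf 1$, contradicting the actual decomposition $\bigl(\Ind_{J_2}^I\mathbf 1\bigr)^{ss}\cong\bigoplus_{i=0}^{p-1}\omega^{-i}\otimes\omega^i$ (check the Brauer character on $I/I_1$: the permutation module $k_L[I/J_2]$ has Brauer character $1$ off the center, while $p\cdot\mathbf 1$ always has character $p$). This spread over \emph{distinct} torus characters is precisely the mechanism the paper exploits: when $c\ge 2$ the statement imposes no lower bound on $n$ at all (e.g. $n=0$ is allowed), so the coverage cannot come from the range of $j$; it must come from $\Ind_{J_2}^I\mathbf 1$ already sweeping out all of $T(\Fp)^\vee$. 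Your closing sentence, suggesting the multiplicity $p^{c-1}$ will "close the coverage," cannot work because multiplicity alone never produces new characters. To repair the argument, replace your isotypicity claim by the correct decomposition of $\Ind_{J_2}^I\mathbf 1$ and route through $\Ind_{J_2}^K(\bar\theta_1\omega^n\otimes\bar\theta_2)$ as the paper does.
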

\begin{proof} We note that the central character of $M\otimes_A k$ is $\bar{\theta}_1\bar{\theta}_2 \omega^{n}$, 
hence every irreducible subquotient of $M\otimes_A k$ 
will have the central character $\bar{\theta}_1\bar{\theta}_2 \omega^{n}$. Set $Z_K:=Z \cap K$ then every irreducible $\sigma$ with the central character 
$\bar{\theta}_1\bar{\theta}_2 \omega^{k-2}$ will be a subquotient of 
\begin{equation}\label{decompeasy}
\Indu{Z_K I_1}{K}{\bar{\theta}_1\bar{\theta}_2 \omega^{n}}\cong \oplus_{i=0}^{p-2}\Indu{I}{K}{\omega^{n-i}\bar{\theta}_1\otimes \omega^i \bar{\theta}_2}. 
\end{equation}
Now 
\begin{equation}\label{yetanother}
M\otimes_A k\cong \Indu{J_c}{K}{((\bar{\theta}_1\otimes \bar{\theta}_2)
\otimes \Sym^{n} k_L^2)}.
\end{equation}
Since  $((\Sym^{n} k_L^2)|_{J_c})^{ss}\cong \oplus_{i=0}^{n} (\omega^{n-i}\otimes \omega^i)$, if $c=1$ and $n\ge p-2$ then $J_c=I$ and 
\eqref{decompeasy} and \eqref{yetanother} give the claim. Assume that $c>1$ then $(M\otimes_A k_L)^{ss}$ will contain 
$$(\Indu{J_2}{K}{\bar{\theta}_1\omega^{n}\otimes \bar{\theta}_2})^{ss}\cong 
(\Indu{I}{K}{((\bar{\theta}_1\omega^{n}\otimes \bar{\theta}_2})\otimes \Indu{J_2}{I}{\Eins}))^{ss}.$$
Now $(\Indu{J_2}{I}{\Eins})^{ss}\cong \oplus_{i=0}^{p-1} (\omega^{-i}\otimes \omega^{i})$. The claim follows from \eqref{decompeasy}.
\end{proof}

\end{document}